\newcommand{\eps}{\varepsilon}
\newcommand{\Z}{\mathbb{Z}}
\newcommand{\R}{\mathbb{R}}
\newcommand{\inj}{\mathrm{inj}}
\newcommand{\grad}{\mathrm{grad}}
\newcommand{\sqbd}{\partial^2 X}
  \newcommand{\Pa}{\mathbf{P}}
\newcommand{\Fu}{\mathbf{F}}
\newtheorem{MainThm}{Theorem}
\newtheorem{theorem}{Theorem}[section]
\newtheorem{lemma}[theorem]{Lemma}
\newtheorem{proposition}[theorem]{Proposition}
\newtheorem{corollary}[theorem]{Corollary}
\newtheorem{definition}[theorem]{Definition}
\newtheorem*{thma*}{Theorem}
\theoremstyle{remark}
\newtheorem{remark}[theorem]{Remark}
\DeclareMathOperator{\arcosh}{\mathrm{arcosh}}
\DeclareMathOperator{\id}{Id}
\DeclareMathOperator{\pr}{pr}
\numberwithin{equation}{section}
\numberwithin{figure}{section}
\begin{document}
	
\UseRawInputEncoding

\title[]{A quantitative closing Lemma and Partner orbits  on Riemannian manifolds with negative curvature}
\author{Michela Egidi}
\address{Institute of Mathematics, University of Rostock, 18051 Rostock, Germany}
\author{Gerhard Knieper}
\address{Dept.\ of Mathematics, Ruhr University Bochum, 44780 Bochum, Germany}
\email{michela.egidi@uni-rostock.de}
\email{gerhard.knieper@rub.de}
\date{\today}

\begin{abstract}
In this paper we prove a quantitative closing Lemma for manifolds of negative sectional curvature.
As an application we study partner and pseudo-partner orbits for self-crossing closed geodesic.
\end{abstract}

\thanks{The authors are partially supported by the German Research Foundation (DFG),
CRC TRR 191, \textit{Symplectic structures in geometry, algebra and dynamics.}}
\maketitle

\section{Introduction}\label{sec:intro}
In this paper we provide a quantitative closing Lemma for closed Riemannian manifolds of negative sectional curvature.
More precisely, let $(M,g)$ be a closed Riemannian manifold of dimension $d\geq2$ whose sectional curvature $K$ is bounded
by $- \kappa_2^2 \le K \le - \kappa_1^2$ for some $0 < \kappa_1 \le  \kappa_2 < \infty$
and let $\phi^t: SM \to SM$ be the geodesic flow on the unit tangent bundle $SM$ of $M$. The metric $d$ induced by $g$ on $M$ yields a metric $d_1$  on  $SM$  given by
$$
d_1(v,w)=\max_{t\in[-1,1]} d(c_v(t), c_w(t))
$$
where  $c_v,c_w$ are geodesics in $M$ with initial vector $v$ and $w$, respectively.
Under these assumptions we obtain the following quantitative version of the closing Lemma (see  Theorem \ref{thm:closing} in Section \ref{sec:closing}).

\begin{MainThm}\label{thm:closing-i}
There exist $\delta_0=\delta_0(\kappa_1,\kappa_2)\in\big(0,\frac{1}{2}\big)$ and $t_0=t_0(\kappa_1,\kappa_2)>1$
such that for all $\delta\leq \delta_0$, all $T\geq t_0$ and all $w\in SM$ with $d_1(w,\phi^T(w))\leq \delta$
there exist $u\in SM$, $T'>0$, with $\phi^{T'}(u)=u$ and
\[
\vert T-T' \vert \leq 2 C \delta,
\]
where the constant $C$ is given by
$$
C= \frac{4\pi}{\kappa_1}\Big(\frac{2\kappa_2}{\kappa_1}+3\Big).
$$
Furthermore,
\[
d_1(\phi^s(w), \phi^s(u))\leq (5C +1)\delta \qquad\forall\, s\in[0,T].
\]
\end{MainThm}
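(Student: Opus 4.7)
The plan is to lift everything to the universal cover $\widetilde M$, which by Cartan--Hadamard is simply connected and complete with sectional curvature pinched in $[-\kappa_2^2,-\kappa_1^2]$. Fix a lift $\tilde w\in S\widetilde M$ of $w$. Choosing $\delta_0$ below half the injectivity radius of $M$, the hypothesis $d_1(w,\phi^Tw)\le\delta$ yields a unique deck transformation $\gamma$ with $d_1(\gamma\tilde w,\phi^T\tilde w)\le\delta$ in $S\widetilde M$. Writing $p$ for the footpoint of $\tilde w$, one has $d(p,\phi^T p)=T$ and $d(\gamma p,\phi^T p)\le\delta$, hence $|d(p,\gamma p)-T|\le\delta$.

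\textbf{Step 2 --- The axis of $\gamma$ supplies the periodic orbit.} In strictly negative curvature every non-trivial deck transformation of a compact quotient is axial: $\gamma$ admits a unique invariant complete geodesic $\alpha$ on which it acts by translation of length $T':=\ell(\gamma)=\inf_x d(x,\gamma x)$. Projecting $\alpha$ down to $M$ and letting $u$ be its unit tangent at a suitably chosen point yields $\phi^{T'}(u)=u$. The upper bound $T'\le T+\delta$ is immediate from Step~1. For the matching lower bound, I would use the hyperbolic cosine rule in Hadamard geometry, which via Rauch comparison with the constant-curvature model of curvature $-\kappa_1^2$ gives an inequality of the form
\[
\cosh\!\bigl(\kappa_1 d(p,\gamma p)/2\bigr)\;\ge\;\cosh\!\bigl(\kappa_1 T'/2\bigr)\,\cosh\!\bigl(\kappa_1\,d(p,\alpha)\bigr),
\]
so once $d(p,\alpha)$ is estimated from the near-alignment of $\dot{\tilde c}_w(0)$ with the axis direction --- again a consequence of $d_1(\gamma\tilde w,\phi^T\tilde w)\le\delta$ --- the period bound $|T-T'|\le 2C\delta$ follows.

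\textbf{Step 3 --- Orbit-level closeness and main obstacle.} To promote the pointwise estimate to the uniform bound $d_1(\phi^s w,\phi^s u)\le(5C+1)\delta$ for all $s\in[0,T]$, I would exploit the convexity of $s\mapsto d(\tilde c_w(s),\alpha)$ on a Hadamard manifold: its maximum on $[0,T]$ is attained at an endpoint and is therefore controlled by Step~2, together with a parallel estimate for the angle between $\dot{\tilde c}_w(s)$ and the axis direction (also a convex-along-geodesics quantity in non-positive curvature). The conceptual scheme is classical; the real work --- and the main obstacle --- is extracting the precise constant $C=(4\pi/\kappa_1)\bigl(2\kappa_2/\kappa_1+3\bigr)$. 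The factor $1/\kappa_1$ comes from inverting the divergence rate in the curvature-$-\kappa_1^2$ comparison space; the ratio $\kappa_2/\kappa_1$ arises when converting a $\kappa_2$-controlled angle bound (lower Rauch) into a $\kappa_1$-controlled length bound (upper Rauch); and the factor $\pi$ most likely enters through linearising a trigonometric inequality on an arc of bounded angular size. Threading these three sources of error through a chain of triangle inequalities without inflating them is the delicate step.
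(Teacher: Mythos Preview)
Your broad architecture --- lift, find the axis of the resulting deck transformation, project, and use convexity to propagate endpoint control --- matches the paper. The genuine gap is in Step~2, at the sentence ``once $d(p,\alpha)$ is estimated from the near-alignment of $\dot{\tilde c}_w(0)$ with the axis direction --- again a consequence of $d_1(\gamma\tilde w,\phi^T\tilde w)\le\delta$.'' This is precisely the nontrivial step, and it is \emph{not} a direct consequence of the $d_1$-hypothesis. The inequality you wrote down relates three quantities --- $d(p,\gamma p)$, $T'$, and $d(p,\alpha)$ --- and you need an independent bound on $d(p,\alpha)$ to extract the lower bound on $T'$; otherwise the argument is circular. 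Knowing that $\gamma_*\tilde w$ is $d_1$-close to $\phi^T\tilde w$ tells you that $\gamma$ nearly maps $(p,v_0)$ to $(\tilde c(T),\dot{\tilde c}(T))$, but it does \emph{not} by itself locate the attracting and repelling fixed points of $\gamma$ on $\partial\widetilde M$, which is what determines the axis.

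The paper closes this gap with a substantial auxiliary result (its Proposition~3.5). From the $d_1$-closeness one first shows, via an elementary angle estimate (Lemma~2.8), that the endpoints at infinity of $\gamma^{-1}_*\phi^T v_0$ lie in cones $\Pa_{2\pi\delta}(v_0)$, $\Fu_{2\pi\delta}(v_0)$ of visual radius $2\pi\delta$ about $v_0^\pm$. Proposition~3.5 then proves --- by a chain of triangle-inequality estimates on visual angles, using that $\gamma$ moves points roughly distance $T\ge t_0$ and that visual angles are $\kappa_2$-Lipschitz in the basepoint --- that $\gamma$ maps $\Fu_{\rho(\theta)}(v_0)$ into itself and $\gamma^{-1}$ maps $\Pa_{\rho(\theta)}(v_0)$ into itself, where $\rho(\theta)=2(\kappa_2/\kappa_1+1)\theta$. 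Brouwer then gives fixed points of $\gamma$ in these cones, which are the axis endpoints; a visibility estimate (Lemma~3.1) now yields $d(p_0,\alpha)\le a(\rho(2\pi\delta))\le\frac{2}{\kappa_1}\rho(2\pi\delta)$. This is where the constant $C=\frac{4\pi}{\kappa_1}(2\kappa_2/\kappa_1+3)$ actually comes from: the $2\pi$ is the angle-to-$d_1$ conversion, and the factor $2\kappa_2/\kappa_1+3$ is $\rho(\theta)/\theta$ plus the original cone radius, assembled in Proposition~3.7. Your diagnosis of the \emph{sources} of the constants is accurate, but the mechanism that combines them is this cone-contraction argument, not a single comparison inequality.

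A smaller issue in Step~3: the angle between $\dot{\tilde c}_w(s)$ and the axis direction is not a convex function of $s$ in general, so you cannot control it by its endpoint values. The paper instead upgrades the pointwise distance bound on $[0,T]$ to $[-1,T+1]$ by using $\gamma c(s)=c(s+T')$ together with $\tilde d(\gamma\tilde c_w(s),\tilde c_w(s+T))\le\delta$ for $s\in[-1,1]$; the $d_1$-estimate then follows from the definition of $d_1$.
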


A first consequence of the closing Lemma is the existence of so called partner orbits of closed geodesics with small self-crossing angle.
A closed geodesic $c_w:[0,T] \to M$  with $c_w(T_1) = c_w(0) $ for $0 <T_1 <T$ has a self-crossing at $p = c_w(0)$ at time $T_1$ and we denote by
$\sphericalangle_p (w, -\dot c_w(T_1))$ its crossing angle.

Partner orbits have been studied by physicist to
explain universal behaviour of the spectrum of the associated quantized systems (see \cite{ABHHKM08} for more details).
These partner orbits are new closed geodesics with slightly smaller length whose image lies in a very small neighbourhood  of the original closed geodesic.
For surfaces they have been first studied by Sieber and Richter \cite{SR01, mS02} and such pairs are sometimes referred to as Sieber-Richter pairs.
A rigorous mathematical treatment has been provided for closed surfaces of constant negative curvature by Huynh and Kunze in \cite{HK15} and  Huynh in \cite{H16}. While in \cite{HK15} and \cite{H16}
the authors used an algebraic approach, we will use more geometric techniques which work for Riemannian manifolds with variable negative curvature
and arbitrary dimension (see Theorem \ref{thm:partner-general} in Section \ref{sec:partner-orbits-general}).

\begin{figure}[htb]\label{fig:self-intersection}
\begingroup%
  \makeatletter%
  \providecommand\color[2][]{%
    \errmessage{(Inkscape) Color is used for the text in Inkscape, but the package 'color.sty' is not loaded}%
    \renewcommand\color[2][]{}%
  }%
  \providecommand\transparent[1]{%
    \errmessage{(Inkscape) Transparency is used (non-zero) for the text in Inkscape, but the package 'transparent.sty' is not loaded}%
    \renewcommand\transparent[1]{}%
  }%
  \providecommand\rotatebox[2]{#2}%
  \newcommand*\fsize{\dimexpr\f@size pt\relax}%
  \newcommand*\lineheight[1]{\fontsize{\fsize}{#1\fsize}\selectfont}%
  \ifx\svgwidth\undefined%
    \setlength{\unitlength}{338.7478329bp}%
    \ifx\svgscale\undefined%
      \relax%
    \else%
      \setlength{\unitlength}{\unitlength * \real{\svgscale}}%
    \fi%
  \else%
    \setlength{\unitlength}{\svgwidth}%
  \fi%
  \global\let\svgwidth\undefined%
  \global\let\svgscale\undefined%
  \makeatother%
  \begin{picture}(1,0.28502031)%
    \lineheight{1}%
    \setlength\tabcolsep{0pt}%
    \put(0,0){\includegraphics[width=\unitlength,page=1]{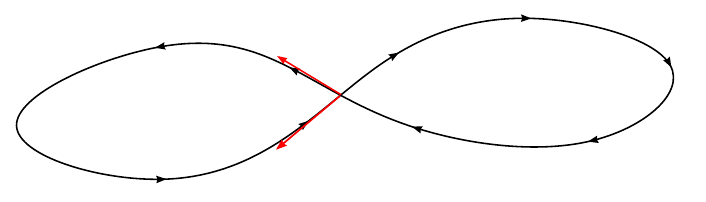}}%
    \put(0.4421747,0.17465525){\makebox(0,0)[lt]{\lineheight{1.25}\smash{\begin{tabular}[t]{l}$w$\end{tabular}}}}%
    \put(0.28310227,0.08916732){\makebox(0,0)[lt]{\lineheight{1.25}\smash{\begin{tabular}[t]{l}$-\dot c_w(T_1)$\end{tabular}}}}%
    \put(0,0){\includegraphics[width=\unitlength,page=2]{self-intersection.pdf}}%
    \put(0.39456283,0.13286327){\makebox(0,0)[lt]{\lineheight{1.25}\smash{\begin{tabular}[t]{l}$\varepsilon$\end{tabular}}}}%
    \put(0,0){\includegraphics[width=\unitlength,page=3]{self-intersection.pdf}}%
    \put(0.06234864,0.21902238){\color[rgb]{0,0.61568627,0}\makebox(0,0)[lt]{\lineheight{1.25}\smash{\begin{tabular}[t]{l}$c_u$\end{tabular}}}}%
  \end{picture}%
\endgroup%

	\caption{Geodesic $c_w$ with self-crossing with small crossing angle $\eps =\sphericalangle_p (w, -\dot c_w(T_1)) $
		and its partner orbit $c_u$.}
\end{figure}

\begin{MainThm}\label{thm:partner-general-i}
Let $M$ be a compact Riemannian manifold with sectional curvature $-\kappa_2^2 \leq K \leq -\kappa_1^2$ for $0<\kappa_1 \leq \kappa_2$.
Let $\eps_0:= \frac{\pi}{16}\frac{\kappa_1}{\kappa_1+\kappa_2}$ and let $T_0=t_0(\kappa_1,\kappa_2)>1$ be the one from Prop. \ref{prop:c}.
Then for all closed geodesic $c_w: [0, T] \to M$ of period $T$
with a self-crossing at $p=c_w(0)$ at time $T_1$ and crossing angle $\eps\leq \eps_0$ and $T_1, T-T_1\geq T_0$ there exists a closed orbit $c_u:  [0, T'] \to M$ of period $T'<T$ satisfying
\[
T-T'  \leq \Big(\frac{18}{\kappa_1} + \frac{16\kappa_2}{\kappa_1^2}\Big) \eps,
\]
and such that 
$$
d(  c_u(s), c_w[0, T] )\le \left( \frac{25}{\kappa_1} +\frac{24\kappa_2}{\kappa_1^2} \right) \eps
$$
for all $s \in [0, T^\prime]$.
\end{MainThm}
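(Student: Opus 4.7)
The plan is to realize the partner orbit as the closed geodesic produced by the quantitative closing lemma (Theorem~\ref{thm:closing-i}) applied to a carefully chosen vector. The construction interprets the concatenation of loop~1 (the piece $c_w|_{[0,T_1]}$) and the time-reversal of loop~2 (the piece $c_w|_{[T_1,T]}$) as an approximately closed trajectory of $\phi^t$ of total length close to $T$, exploiting the time-reversal symmetry of the geodesic flow.

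First I set $v:=\dot c_w(T_1)$. Both $w$ and $v$ are based at $p$ with $\sphericalangle_p(w,-v)=\eps$, and a Rauch-type comparison using $-\kappa_2^2\le K\le -\kappa_1^2$ converts this into $d_1(w,-v)\le C_0\eps$ for an explicit $C_0=C_0(\kappa_1,\kappa_2)$. Denote by $\sigma:SM\to SM$ the flip $\xi\mapsto -\xi$; it satisfies $\phi^t\circ\sigma=\sigma\circ\phi^{-t}$, and combined with the periodicity of $c_w$ this yields $\phi^{T-T_1}(-w)=-v$. Hence the pseudo-orbit obtained by flowing $w$ for time $T_1$ and then $-w$ for time $T-T_1$ is an approximately closed trajectory of total time $T$ with two ``corners'' at $p$: one matching $v$ to $-w$ and one matching $-v$ back to $w$, each of $d_1$-size at most $C_0\eps$.

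To splice the two pieces into a single honest orbit of $\phi^t$, I would use the local product structure of the Anosov geodesic flow in negative curvature. Since $v$ and $-w$ lie within $O(\eps)$ in $d_1$, there is a vector $y\in W^u_{\mathrm{loc}}(v)\cap W^s_{\mathrm{loc}}(-w)$ at distance $O(\eps)$ from each; setting $u_0:=\phi^{-T_1}(y)$ produces a vector whose forward trajectory shadows loop~1 on $[0,T_1]$ and shadows the reverse of loop~2 on $[T_1,T]$, thanks to the contraction of $W^u$ under $\phi^{-t}$ and of $W^s$ under $\phi^t$. The hypotheses $T_1,T-T_1\ge T_0$ ensure enough time for these contractions to absorb the mismatch; then the triangle inequality through $w$ and $-v$ yields $d_1(u_0,\phi^T(u_0))\le\delta$ with $\delta=C_1\eps$ for an explicit $C_1$, and the assumption $\eps\le\eps_0=\tfrac{\pi}{16}\tfrac{\kappa_1}{\kappa_1+\kappa_2}$ is calibrated so that $\delta\le\delta_0$ and Theorem~\ref{thm:closing-i} applies.

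Applying Theorem~\ref{thm:closing-i} to the pair $(u_0,T)$ produces a periodic vector $u$ of period $T'$ with $|T-T'|\le 2C\delta$ and $d_1(\phi^s(u_0),\phi^s(u))\le(5C+1)\delta$ for every $s\in[0,T]$, where $C=\tfrac{4\pi}{\kappa_1}\bigl(\tfrac{2\kappa_2}{\kappa_1}+3\bigr)$. Since $\phi^s(u_0)$ is within $O(\eps)$ of the image of $c_w$ for every $s$, the triangle inequality in $M$ gives the stated bound on $d(c_u(s),c_w[0,T])$; inserting the explicit forms of $C_0$, $C_1$ and $C$ reproduces the coefficients $\tfrac{18}{\kappa_1}+\tfrac{16\kappa_2}{\kappa_1^2}$ and $\tfrac{25}{\kappa_1}+\tfrac{24\kappa_2}{\kappa_1^2}$. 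To distinguish $c_u$ from $c_w$ (and its time-reverse) and to obtain the strict inequality $T'<T$, I would note that $\phi^{T_1}(u_0)$ is near $-w$ rather than $v$, so $c_u$ traces loop~1 followed by the reverse of loop~2 --- a free-homotopy class distinct from that of $c_w$ --- and a short direct computation shows that smoothing an $\eps$-corner strictly decreases length. The main difficulty is making all constants explicit: the Rauch comparison, the local product structure, and the contraction rates of $W^{s/u}$ must combine to produce precisely the coefficients above, which requires a careful uniform estimate on the size of the local product neighborhood in terms of $\kappa_1$ and $\kappa_2$.
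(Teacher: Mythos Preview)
Your overall picture---concatenate loop~1 with the time-reverse of loop~2 and close up---is the right idea, but your implementation differs from the paper's in a way that prevents you from recovering the stated constants.

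The paper does \emph{not} apply Theorem~\ref{thm:closing-i} to a shadowing vector on $SM$. Instead it works entirely in the universal cover: lifting the two loops so they meet at $p_1$, with deck transformations $\gamma_1,\gamma_2$, one gets points $p_0=\tilde c_1(0)$ and $p_2=\gamma_2^{-1}(p_1)$, and the straight geodesic $\tilde c_3$ from $p_0$ to $p_2$ plays the role of your spliced orbit. The key observations are that $\gamma_2^{-1}\gamma_1$ sends $p_0$ to $p_2$ exactly (so $\gamma_2^{-1}\gamma_1\,\pi v=\pi\phi^{\hat T}(v)$ for $v=\dot{\tilde c}_3(0)$), and that elementary triangle comparison gives $\sphericalangle_{p_0}(v_0,v)\le\eps$ and $\sphericalangle_{p_2}((\gamma_2^{-1}\gamma_1)_*v_0,\phi^{\hat T}(v))\le 2\eps$. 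One then applies Proposition~\ref{lemm:length-gamma-2} together with Remark~\ref{rmk:improved-length} directly to $\gamma_2^{-1}\gamma_1\in\Gamma_{2\eps}(v_0,\hat T)$ with $\theta=2\eps$, obtaining $\hat T-T'\le\frac{16}{\kappa_1}(\frac{\kappa_2}{\kappa_1}+1)\eps$; combined with $T-\hat T\le\frac{2}{\kappa_1}\eps$ from Corollary~\ref{cor:tri}, this gives precisely $\frac{18}{\kappa_1}+\frac{16\kappa_2}{\kappa_1^2}$.

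Your route through Theorem~\ref{thm:closing-i} cannot reproduce these numbers: that theorem converts a $d_1$-distance $\delta$ into an angle via Lemma~\ref{lemma:angles-at-infinity}, incurring a factor $2\pi$, so its constant is $C=\frac{4\pi}{\kappa_1}(\frac{2\kappa_2}{\kappa_1}+3)$. Even with $\delta$ of order $\eps$, the bound $2C\delta$ is already off by a factor of roughly $\pi$. Your use of the local product structure is also foreign to the paper---no stable/unstable manifolds appear anywhere---and, as you yourself note, making the product-neighborhood size explicit in $\kappa_1,\kappa_2$ is nontrivial. The paper's geodesic $\tilde c_3$ is the elementary replacement: it shortcuts the corner by straight-line geometry rather than by Anosov shadowing, and because $\pi v=\pi v_0$ exactly, the sharper constants of Remark~\ref{rmk:improved-length} apply. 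The strict inequality $T'<T$ also falls out immediately, since $T'\le\hat T<T_1+T_2$ by non-degeneracy of the triangle at $p_1$.
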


 For surfaces we can improve the estimate of the action difference obtained in Theorem \ref{thm:partner-general-i}  (see Theorem \ref{thm:partner-orbits-2dim} in Section  \ref{sec:partner-orbits-2dim}).
 In particular, if the pinching constant of the Gaussian curvature is at least $\frac{1}{2}$ the action difference is of order $\eps^2$ in the crossing angle $\eps$.
 This result confirms the findings of Sieber and Richter \cite{SR01, mS02}, and agrees with the result of Huynh and Kunze \cite{HK15} for surfaces of constant negative Gaussian curvature.
 More precisely, we obtain:

\begin{MainThm}\label{thm:partner-orbits-2dim-i}
	Let $(M,g)$ be a  compact Riemannian surface with Gaussian curvature $-\kappa_2^2 \leq K \leq -\kappa_1^2$ for $0<\kappa_1 \leq \kappa_2$.
	Let $\eps_0:= \frac{\pi}{16}\frac{\kappa_1}{\kappa_1+\kappa_2}$ and let $T_0=t_0(\kappa_1,\kappa_2)>1$ be as in Proposition \ref{prop:c}.
	Then for all closed geodesic $c_w\colon[0,T]\rightarrow M$ of period $T$ with a self-crossing at $p =c_w(0)$ at time $T_1$ with crossing angle $\eps\leq \eps_0$
	and $T_1, T-T_1\geq T_0$,
	there exists a closed geodesic $c_u\colon [0,T']\rightarrow M$ of period $T'<T$ such that
	\[
	C_1 \eps^2 \leq T-T'  \leq \left\{
	\begin{array}{ll}
		C_2 \eps^{\frac{4\kappa_1}{\kappa_2}} & \frac{\kappa_1}{\kappa_2}\in\big(0,\frac{1}{2}\big),\\
		C_3 \eps^2 &  \frac{\kappa_1}{\kappa_2}\in\big[\frac{1}{2},1\big],
	\end{array}\right.
	\]
	
	where $C_i$ are suitable positive constants depending on the curvature bounds and the injectivity radius of $M$.
\end{MainThm}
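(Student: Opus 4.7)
My plan is to refine Theorem \ref{thm:partner-general-i} using the $2$-dimensional variational characterization of the partner orbit. In dimension two, the partner $c_u$ is the unique closed geodesic in the free-homotopy class of the piecewise-geodesic loop $\tilde\gamma:=\alpha\ast\bar\beta$, where $\alpha = c_w|_{[0,T_1]}$ and $\bar\beta$ is the time-reversal of $c_w|_{[T_1,T]}$. A direct computation from $\sphericalangle_p(w,-\dot c_w(T_1))=\eps$ shows that $\tilde\gamma$ has length exactly $T$ and exactly two corners at $p$, each of exterior angle $\eps$. Both bounds on $T-T'$ then reduce to estimating how much length is saved by smoothing these two corners.

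For the lower bound $T-T'\geq C_1\eps^2$, I would apply a Rauch comparison from below against the constant-curvature model of curvature $-\kappa_2^2$. Near each corner, the two geodesic arms of $\tilde\gamma$ have length at least $T_0$ by the hypothesis $T_1, T-T_1\geq T_0$. The hyperbolic cosine law in the comparison model yields an explicit quadratic-in-$\eps$ lower bound for the length saved by replacing the corner with a geodesic chord; Rauch then transfers this to $M$ with $C_1=C_1(\kappa_2,\inj(M))$.

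For the upper bound I would expand the length functional along a variation from $c_u$ to a competitor in its free-homotopy class. Using the second variation formula, the length defect is captured by an integral of the form $\int(|J'|^2-K|J|^2)\,dt$ for a perpendicular variation field $J$ along $c_u$ of amplitude $O(\eps)$ localised near the two corners. A Rauch comparison from above against the model $-\kappa_1^2$ controls $J$. In the pinched regime $\kappa_1/\kappa_2\geq 1/2$, the stable and unstable decay rates of $J$ balance symmetrically and the integral is $O(\eps^2)$, giving $C_3\eps^2$. In the weakly pinched regime $\kappa_1/\kappa_2<1/2$, the slowest decay rate of $J$ is $\kappa_1$ while the allowed corner concentration is still on the faster scale $e^{-\kappa_2\,\cdot}$; optimising this mismatch across the two corners produces the exponent $4\kappa_1/\kappa_2$, with the factor $4$ arising symmetrically from two corners each contributing $2\kappa_1/\kappa_2$.

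I expect the main obstacle to be obtaining the sharp exponent $4\kappa_1/\kappa_2$ in the weakly pinched regime. A naive global Rauch estimate yields only a bound of the form $\eps^2$ with a constant that blows up as $\kappa_1/\kappa_2\to 0$, which is useless. Sharpness will require a corner-localised analysis of the Jacobi equation along $c_w$ together with a Riccati-type comparison, exploiting the fact that in dimension two the perpendicular Jacobi field is scalar and can therefore be tracked by a single Riccati ODE on each branch.
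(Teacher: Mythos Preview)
Your route is genuinely different from the paper's, which is entirely trigonometric and avoids Jacobi fields. The paper lifts the two loops to the universal cover as in Theorem~\ref{thm:partner-general}, obtaining a geodesic triangle $\Delta(p_0,p_1,p_2)$ with side lengths $T_1,T_2$ and angle $\pi-\eps$ at $p_1$; the partner period is the translation length $T'$ of the deck transformation $\gamma_2^{-1}\gamma_1$ sending $p_0$ to $p_2$. Setting $\hat T=d(p_0,p_2)$, the action difference is split as $T-T'=(T-\hat T)+(\hat T-T')$. The first piece is controlled above and below by the hyperbolic laws of cosines at the two curvature bounds (Lemma~\ref{lem:T'-bound}); the second piece uses the two-dimensional orthogonal-projection identity of Lemma~\ref{lem:perp-proj} to get $\hat T-T'=O(\eps^2)$, and the trivial $\hat T-T'\ge 0$ for the lower bound. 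Your lower bound via Rauch/Toponogov against the $-\kappa_2^2$ model is close in spirit to the paper's lower bound in Lemma~\ref{lem:T'-bound}.

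There is, however, a real gap in your upper bound. The exponent $4\kappa_1/\kappa_2$ does not arise from any decay-rate balance of Jacobi fields; it arises from an \emph{a priori} lower bound on the loop lengths $T_1,T_2$ in terms of $\eps$ and $\inj(M)$, namely Proposition~\ref{prop:period-loops1}: a geodesic loop with opening angle $\eps$ has length at least $\tfrac{1}{\kappa_2}\arcosh\bigl(\tfrac{b}{1-\cos\eps}+1\bigr)\sim\tfrac{2}{\kappa_2}\log(1/\eps)$ with $b=b(\inj M)$. In the law-of-cosines estimate for $T-\hat T$ one is left with an error $e^{-\kappa_1 T}=e^{-\kappa_1(T_1+T_2)}$, and it is precisely this bound that converts it into $\eps^{4\kappa_1/\kappa_2}$. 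Your only hypothesis $T_i\ge T_0$ carries no $\eps$-dependence, so your scheme as written cannot produce that exponent; the injectivity radius must enter the upper bound, not just the lower one. Separately, the second-variation framework is shakier than you suggest: the competitor $\tilde\gamma$ has corners, so the first variation at $c_u$ does not vanish and boundary terms appear; and the normal displacement from $c_u$ to $\tilde\gamma$ is $O(\eps)$ along the entire orbit (already by Theorem~\ref{thm:partner-general}), not localised near the corners, so the integral $\int(|J'|^2-K|J|^2)$ picks up a factor of $T'$ unless you do something more refined than a Rauch comparison.
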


A second consequence of the closing Lemma is the existence of so called pseudo-partner orbits for closed geodesic with large crossing angle,
that is the existence of a pair of two closed geodesics remaining close to the loops of the original self-crossing geodesic.
Their lengths are comparable, although smaller, with the the length of the loops
 (see Theorem \ref{thm:pseudo-partner} in Section \ref{sec:pseudo-partner-orbits}).

\begin{figure}[htb]\label{fig:pseudo-partner}
\begingroup%
  \makeatletter%
  \providecommand\color[2][]{%
    \errmessage{(Inkscape) Color is used for the text in Inkscape, but the package 'color.sty' is not loaded}%
    \renewcommand\color[2][]{}%
  }%
  \providecommand\transparent[1]{%
    \errmessage{(Inkscape) Transparency is used (non-zero) for the text in Inkscape, but the package 'transparent.sty' is not loaded}%
    \renewcommand\transparent[1]{}%
  }%
  \providecommand\rotatebox[2]{#2}%
  \newcommand*\fsize{\dimexpr\f@size pt\relax}%
  \newcommand*\lineheight[1]{\fontsize{\fsize}{#1\fsize}\selectfont}%
  \ifx\svgwidth\undefined%
    \setlength{\unitlength}{206.57437161bp}%
    \ifx\svgscale\undefined%
      \relax%
    \else%
      \setlength{\unitlength}{\unitlength * \real{\svgscale}}%
    \fi%
  \else%
    \setlength{\unitlength}{\svgwidth}%
  \fi%
  \global\let\svgwidth\undefined%
  \global\let\svgscale\undefined%
  \makeatother%
  \begin{picture}(1,0.72840461)%
    \lineheight{1}%
    \setlength\tabcolsep{0pt}%
    \put(0,0){\includegraphics[width=\unitlength,page=1]{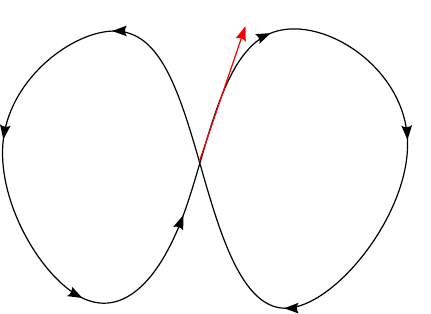}}%
    \put(0.35579859,0.66453263){\makebox(0,0)[lt]{\lineheight{1.25}\smash{\begin{tabular}[t]{l}$w$\end{tabular}}}}%
    \put(0.52824554,0.68284971){\makebox(0,0)[lt]{\lineheight{1.25}\smash{\begin{tabular}[t]{l}$\dot c_w(T_1)$\end{tabular}}}}%
    \put(0,0){\includegraphics[width=\unitlength,page=2]{pseudo-partner-1.pdf}}%
    \put(0.45262396,0.48526027){\makebox(0,0)[lt]{\lineheight{1.25}\smash{\begin{tabular}[t]{l}$\varepsilon$\end{tabular}}}}%
    \put(0,0){\includegraphics[width=\unitlength,page=3]{pseudo-partner-1.pdf}}%
    \put(0.05033039,0.35888056){\color[rgb]{0,0.63529412,0}\makebox(0,0)[lt]{\lineheight{1.25}\smash{\begin{tabular}[t]{l}$c_{u_1}$\end{tabular}}}}%
    \put(0.52502498,0.33415406){\color[rgb]{0,0.63529412,0}\makebox(0,0)[lt]{\lineheight{1.25}\smash{\begin{tabular}[t]{l}$c_{u_2}$\end{tabular}}}}%
    \put(0,0){\includegraphics[width=\unitlength,page=4]{pseudo-partner-1.pdf}}%
  \end{picture}%
\endgroup%

	\caption{Geodesic $c_w$ with a self-crossing of large crossing angle $\sphericalangle_p (w, -\dot c_w(T_1))  \in [\pi -\eps, \pi]$ and its pseudo-partner orbit.}
\end{figure}

Precisely we prove the following:

\begin{MainThm}\label{thm:pseudo-partner-orbits}
		Let $M$ be a compact Riemannian manifold with sectional curvature $-\kappa_2^2 \leq K \leq -\kappa_1^2$ for $0<\kappa_1 \leq \kappa_2$.
	Let $\eps_0:= \frac{\pi}{16}\frac{\kappa_1}{\kappa_1+\kappa_2}$ and let $T_0=t_0(\kappa_1,\kappa_2)>1$ be the one from Prop. \ref{prop:c}.
	Then for all closed geodesic $c_w: [0, T] \to M$ of period $T$
	with a self-crossing at $p=c_w(0)$ at time $T_1$ such that $\sphericalangle_{p}(w, \dot c_{w}(T_1))=\eps\leq \eps_0 $
	and $T_1, T_2:=T-T_1\geq T_0$
	there exists a pair of closed geodesics $c_{u_1}$ and $c_{u_2}$ of period $\hat T_1$ and $\hat T_2$,
	satisfying
	\[
	0\leq T_1-\hat T_1\leq \frac{8}{\kappa_1}\left(\frac{\kappa_2}{\kappa_1}+1\right)\eps
	\;  \;\text{and} \; \;
	0\leq T_2-\hat T_2\leq \frac{8}{\kappa_1}\left(\frac{\kappa_2}{\kappa_1}+1\right)\eps.
	\]
	Furthermore,
	\[
	d(c_{u_1}(s), c_w(s))\leq \frac{12}{\kappa_1}\Big(\frac{\kappa_2}{\kappa_1}+1\Big) \eps
	\]
	for $s\in[0,T_1]$, and
	\[
	d(c_{u_2}(s), c_w(s+T_1))\leq \frac{12}{\kappa_1}\Big(\frac{\kappa_2}{\kappa_1}+1\Big)\eps
	\]
	for $s\in[0,T_2]$.
	In particular,
	\[
	d(c_{u_1}(\hat T_1), c_{u_2}(0))\leq \frac{32}{\kappa_1}\left(\frac{\kappa_2}{\kappa_1}+1\right)\eps.
	\]
	
\end{MainThm}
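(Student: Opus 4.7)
The plan is to apply the quantitative closing Lemma (Theorem A) separately to each of the two loops of the self-crossing geodesic. Set $v_1:=w=\dot c_w(0)$ and $v_2:=\dot c_w(T_1)=\phi^{T_1}(w)$; both lie in $S_pM$, and by hypothesis $\sphericalangle_p(v_1,v_2)=\eps$. Since $c_w$ is closed, $\phi^{T_2}(v_2)=\dot c_w(T)=v_1$, so $v_2$ and $\phi^{T_2}(v_2)$ are likewise $\eps$-close in angle at $p$. The first step is to convert these angular bounds into bounds on $d_1$.

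The geodesics $c_{v_1}$ and $c_{v_2}$ issue from the common point $p$ with unit directions differing by angle $\eps$, so a Rauch-type comparison against the space form of constant curvature $-\kappa_2^2$ yields
\[
d(c_{v_1}(t),c_{v_2}(t))\leq A\eps\qquad\text{for all }|t|\leq 1,
\]
where $A=A(\kappa_2)$ is an explicit constant. Hence $d_1(v_1,\phi^{T_1}(v_1))\leq A\eps$, and analogously $d_1(v_2,\phi^{T_2}(v_2))\leq A\eps$. Choosing $\eps_0$ small enough that $A\eps_0\leq\delta_0$, and using $T_1,T_2\geq T_0$, Theorem A furnishes closed orbits $c_{u_1},c_{u_2}$ of periods $\hat T_1,\hat T_2$ with
\[
|T_i-\hat T_i|\leq 2CA\eps,\qquad d_1(\phi^s(v_i),\phi^s(u_i))\leq (5C+1)A\eps
\]
for $s\in[0,T_i]$. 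Since the distance between basepoints is dominated by the $d_1$-distance between unit vectors, the two distance estimates in the statement fall out. A book-keeping of the explicit $C$ from Theorem A together with the sharp Rauch constant $A$ is then meant to reproduce the numerical factors $\tfrac{8}{\kappa_1}(\kappa_2/\kappa_1+1)$ and $\tfrac{12}{\kappa_1}(\kappa_2/\kappa_1+1)$.

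A separate point is that Theorem A supplies only $|T_i-\hat T_i|$, whereas the statement asserts $T_i-\hat T_i\geq 0$. I would argue this from strict negative curvature: the orbit $c_{u_i}$ lies in a thin tube around the loop $c_w|_{[0,T_i]}$ and is therefore freely homotopic to it, and the closed geodesic in a nontrivial free-homotopy class minimizes length in negative curvature, forcing $\hat T_i\leq T_i$. Finally, the last inequality of the theorem is a triangle inequality
\[
d(c_{u_1}(\hat T_1),c_{u_2}(0))\leq d(c_{u_1}(0),p)+d(p,c_{u_2}(0)),
\]
using $c_{u_1}(\hat T_1)=c_{u_1}(0)$ because $c_{u_1}$ is closed of period $\hat T_1$, and $p=c_w(0)=c_w(T_1)$; each term is then controlled by the $s=0$ case of the closing-Lemma estimate.

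The main obstacle is the Rauch computation yielding the sharp constant $A$ in the bound on $d_1(v_i,\phi^{T_i}(v_i))$ and the careful tracking of constants so that $2CA$ and $(5C+1)A$ collapse to the factors quoted in the statement. A secondary subtlety is verifying $\hat T_i\leq T_i$, since Theorem A only bounds the absolute difference; this is where the free-homotopy/length-minimization property specific to negative curvature enters, rather than any property of the closing construction itself.
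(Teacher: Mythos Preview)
Your overall strategy---apply the closing machinery to each loop separately---is the same as the paper's, but the route you take through Theorem~A (the closing lemma on $SM$ phrased in terms of the $d_1$-metric) cannot deliver the constants in the statement, and you seem to sense this when you write that a ``book-keeping \dots\ is then meant to reproduce the numerical factors.'' It will not. Going through Theorem~A forces two lossy conversions: first angle $\eps \to d_1$-distance $\delta=A\eps$ via Rauch, then inside Theorem~A the distance $\delta$ is converted back to an angular parameter $\theta_\delta=\max\{2\pi,\kappa_1\}\delta$. The resulting bound on $|T_i-\hat T_i|$ is $2C\,A\eps$ with $C=\frac{2}{\kappa_1}\bigl(\tfrac{2\kappa_2}{\kappa_1}+3\bigr)\max\{2\pi,\kappa_1\}$, which is already much larger than $\frac{8}{\kappa_1}\bigl(\tfrac{\kappa_2}{\kappa_1}+1\bigr)$ before you multiply by $A$. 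Even the sharper same-footpoint version (Theorem~\ref{thm:closing-2}) carries a factor $\tilde C=\frac{4\pi}{\kappa_1}\bigl(\tfrac{\kappa_2}{\kappa_1}+1\bigr)$, still too large by $2\pi$.

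The paper avoids both conversions by working directly in the universal cover: lift the loop $c_w|_{[0,T_1]}$, let $\gamma_1$ be the associated deck transformation, and observe that the hypothesis $\sphericalangle_p(w,\dot c_w(T_1))=\eps$ says \emph{exactly} that $v_0\in A_\eps(v_0)\cap\phi^{-T_1}(\gamma_1)_*A_\eps(v_0)$ with $\pi v_0=\pi v_0$ trivially. Then Proposition~\ref{lemm:length-gamma-2} together with Remark~\ref{rmk:improved-length} (the improvement when $\pi v=\pi v_0$) applies with $\theta=\eps$ itself, yielding the constants $\frac{8}{\kappa_1}\bigl(\tfrac{\kappa_2}{\kappa_1}+1\bigr)$ and $\frac{12}{\kappa_1}\bigl(\tfrac{\kappa_2}{\kappa_1}+1\bigr)$ on the nose. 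The inequality $\hat T_i\le T_i$ also falls out immediately from $|\gamma_i|=\inf_{q}d(q,\gamma_i q)\le d(p_0,\gamma_i p_0)=T_i$, so your free-homotopy detour is unnecessary (though it is correct, once you check the tube radius is below the injectivity radius).

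In short: bypass Theorem~A and go one level deeper into its proof, feeding the crossing angle $\eps$ directly into the cone construction $A_\theta(v_0)$ rather than first smearing it into a $d_1$-distance.
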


The paper is organized as follows: in Section \ref{sec:hadamard} we recall important facts about Hadamard manifolds
and we prove some trigonometric estimates which we will need in the sequel. In Section \ref{sec:closing} we obtain the quantitative version of the Anosov closing Lemma formulated in Theorem \ref{thm:closing-i}.
In Section \ref{sec:partner-orbits-general} we derive the existence of Sieber-Richter pairs for any dimension, together with the estimate for the action difference stated in Theorem \ref{thm:partner-general-i},
while in Section \ref{sec:partner-orbits-2dim} we analyse Sieber-Richter pairs for surfaces and obtain an improved estimate for their action difference (see Theorem \ref{thm:partner-orbits-2dim-i}).
Finally in Section \ref{sec:pseudo-partner-orbits} we prove the existence of pseudo-partner orbits (see Theorem \ref{thm:pseudo-partner-orbits}).

\section{Basic facts on Hadamard manifolds}\label{sec:hadamard}
In this section we recall certain standard facts on Hadamard manifolds, i.e, complete simply connected Riemannian manifolds of non-positive curvature which we will need in the sequel.
We start with the construction of the boundary at infinity and a discussion on Busemann functions. We then prove some basic estimates for geodesic triangle under the assumption of
negative pinched curvature and we conclude this section with a quantitative version of the Anosov closing Lemma, on which the construction of partner orbits will be based.

For the rest of this section, $X$ denotes a Hadamard manifold of dimension $d\geq 2$.

\subsection{Boundary at infinity}\label{subsec:boundary}

To introduce the boundary at infinity of $X$, we follow \cite{EO73, Eb1} and we start by defining an equivalence relation on the set of geodesics in $X$.

\begin{definition}
Two geodesic rays  $c_1, c_2\colon [0,\infty) \to X$ are called \emph{asymptotic}
if $ \sup_{t\geq 0} d(c_1(t), c_2(t)) < \infty$.  This is an equivalence relation; we write  $ \partial X$ for the set of equivalence classes and call its elements \emph{points at infinity}. We denote the equivalence class of a geodesic ray (or geodesic) $c$  by $c(\infty)$.
\end{definition}

Given $v\in SX$, let $ c_{v}$ be the unique geodesic with $\dot{c}_{v}(0)=v$.  The following is proved in \cite[Propositions 1.5 and 1.14]{Eb1}.

\begin{lemma}\label{lem:p-to-xi}
Given any $p\in X$ and $\xi\in \partial X$, there is a unique geodesic ray $c=c_{p,\xi}\colon [0,\infty)\to X$ with $c(0) = p$ and $c(\infty) = \xi$.
Equivalently, the map  $f_p\colon S_pX \to \partial X$ defined by $f_p(v) = c_{v}(\infty)$ is a bijection.
\end{lemma}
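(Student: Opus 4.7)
The plan is to prove existence and uniqueness of a geodesic ray from $p$ asymptotic to $\xi$; then $f_p$ is surjective by existence and injective by uniqueness, hence a bijection. The single geometric input I will assume is the standard Hadamard-manifold fact that for any two unit-speed geodesics $\alpha,\beta$ in $X$, the function $t\mapsto d(\alpha(t),\beta(t))$ is convex. This is a consequence of non-positive curvature via the Jacobi-field equation (the norm-squared of a Jacobi field along a geodesic variation has non-negative second derivative when $K\leq 0$).

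\textbf{Existence.} Fix a representative geodesic ray $\gamma\colon[0,\infty)\to X$ of the class $\xi$ and set $q:=\gamma(0)$. For each $n\in\N$ let $c_n\colon[0,T_n]\to X$ be the unit-speed geodesic from $p$ to $\gamma(n)$, where $T_n=d(p,\gamma(n))$, and set $v_n:=\dot c_n(0)\in S_pX$. Compactness of $S_pX$ yields a subsequence $v_{n_k}\to v_\infty$; define $c:=c_{v_\infty}$. Convexity of $t\mapsto d(c_{n_k}(t),\gamma(t))$ on $[0,T_{n_k}]$ bounds it by the maximum of its boundary values:
$$d(c_{n_k}(t),\gamma(t))\leq\max\bigl\{d(p,q),\,|T_{n_k}-n_k|\bigr\}\leq d(p,q),$$
the last step using the triangle inequality $|T_{n_k}-n_k|=|d(p,\gamma(n_k))-d(q,\gamma(n_k))|\leq d(p,q)$. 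Continuous dependence of geodesics on initial conditions gives $c_{n_k}(t)\to c(t)$ for each fixed $t$, so passing to the limit yields $d(c(t),\gamma(t))\leq d(p,q)$ for all $t\geq 0$. Thus $c$ is asymptotic to $\gamma$ and $c(\infty)=\xi$.

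\textbf{Uniqueness.} If $c_1,c_2\colon[0,\infty)\to X$ are geodesic rays from $p$ with $c_1(\infty)=c_2(\infty)$, then $c_1$ and $c_2$ are mutually asymptotic, so $f(t):=d(c_1(t),c_2(t))$ is a non-negative bounded convex function with $f(0)=0$. Any such function vanishes identically: if $f(t_0)>0$ for some $t_0>0$, convexity forces $f(t)\geq(t/t_0)f(t_0)\to\infty$ as $t\to\infty$, contradicting boundedness. Hence $c_1\equiv c_2$, which also yields injectivity of $f_p$.

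The main difficulty is controlling the limit quantitatively so that the candidate ray $c$ actually stays at bounded distance from $\gamma$ for all time. The two ingredients that work in tandem are (i) convexity of the inter-geodesic distance — which upgrades a boundary bound to a uniform interior bound — and (ii) the elementary estimate $|T_n-n|\leq d(p,q)$, which ensures the boundary data stays controlled as $n\to\infty$. Without both, the convexity bound would degenerate and the limit ray need not be asymptotic to $\gamma$.
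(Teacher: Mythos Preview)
Your proof is correct. The paper does not actually prove this lemma; it merely cites \cite[Propositions 1.5 and 1.14]{Eb1} and moves on. What you have written is essentially the standard textbook argument (as in Eberlein's notes or in Bridson--Haefliger): existence via a limiting procedure on geodesic segments from $p$ to $\gamma(n)$, with the convexity of $t\mapsto d(c_{n_k}(t),\gamma(t))$ together with the endpoint bound $|T_{n_k}-n_k|\le d(p,q)$ giving the uniform control needed to pass to the limit; and uniqueness from the observation that a bounded convex function vanishing at the origin must vanish identically. There is nothing to compare approach-wise, since the paper supplies no argument of its own.
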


Following  Eberlein
and O'Neill \cite{EO73} we equip $\partial X$ with a topology that makes it a compact metric space homeomorphic to $S^{n-1}$.  Fix $p\in X$ and let $f_p\colon S_pX  \to \partial X$
 be the bijection $v \mapsto c_{v}(\infty)$ from Lemma \ref{lem:p-to-xi}. The topology (sphere-topology) on $\partial X$ is defined such that $f_p$
becomes a homeomorphism.
Since the map $f_q^{-1} f_p \colon S_p X \to S_q X$ is a homeomorphism for all $q\in X$ \cite{Eb1}, the topology is independent of the reference point $p$.

The topologies on $\partial X$ and $X$
extend naturally
 to  $\bar X: =  X \cup \partial X$
by requiring that the map
$\varphi\colon B_1(p) = \{v \in T_p X:   \|v\| \le 1\} \to \bar X$
defined by
\[
\varphi(v) = \begin{cases}
 \exp_p\left(\frac{v}{1-\|v\|}\right) & \|v\| < 1\\
f_p(v) & \|v\| = 1
\end{cases}
\]
is a homeomorphism. This topology, called the cone topology, was also introduced by Eberlein
and O'Neill in \cite{EO73}. In
particular, $\bar X$ is homeomorphic to a closed ball in
$\mathbb{R}^n$.
The relative topology on $\partial X$ coincides with the sphere topology, and the relative topology on $X$ coincides with the manifold topology.

Every geodesic $c\colon \R \to X$ determines two distinct points $c(-\infty)$ and $c(+\infty)$ on $\partial X$.  We will  use the notation
\begin{equation}\label{eqn:vpm}
v^- := c_{v}(-\infty)
\quad\text{and}\quad
v^+ := c_{v}(+\infty).
\end{equation}
We also write
\begin{equation}\label{eqn:square}
\sqbd:= \{ (\xi,\eta) \in (\partial X)^2 : \xi \neq \eta\}
\end{equation}
and consider the \emph{endpoint map}
\begin{equation}\label{eqn:endpoints}
\begin{aligned}
E \colon SX &\to \sqbd, \qquad
v &\mapsto 
(v^-,v^+).
\end{aligned}
\end{equation}
By \cite[Proposition 1.7]{Eb1}, every pair of distinct points on $\partial X$ is joined by exactly one geodesic, so the map $E$ is onto.
Note that $E$ is continuous by the definition of the topology on $\partial X$.

\subsection{Busemann functions}\label{subsec:busemann}

Given $v\in SX$, the function $b_v: X \to \R$ defined by
\[
b_{v}(q) := \lim_{t\to\infty} \left( d(q, c_{v}(t)) - t\right)
\]
is called the \emph{Busemann function} associated to $v$. Busemann functions are of class $C^2$, as proved by Heintze and Im Hof in (the proof of) \cite[Proposition 3.1]{hi78}, see also the references therein for an
unpublished result of Eberlein.  For a discussion on the regularity of Busemann functions in the more general context of no conjugate points see  \cite{gK85}  and  \cite{gK02}.

\begin{definition}\label{def:busemann}
Given $p\in X$ and $\xi \in \partial X$, let $v\in S_p X$ be the unique unit tangent vector at $p$ such that $c_{v}(\infty) = \xi$.
We call $b_\xi(q,p) := b_{v}(q)$ the Busemann function based at $\xi$
and normalized by $p$ ($b_\xi(p,p) =0$).
\end{definition}

The zero set of the Busemann function  $b_\xi(q,p)$ given by
$$
 H_\xi(p) := \{q \in X \mid b_\xi(q,p) = 0\}
 $$
is called the horosphere through $p$ centred at $\xi$.
 The  Busemann function fulfils the following \emph{cocycle property}:
\begin{equation}\label{eqn:pqr}
b_\xi(p,q) = b_\xi(p,r) + b_\xi(r,q)
\text{ for all } \xi\in \partial X \text{ and } p,q,r\in X.
\end{equation}
Note also that
\begin{equation}\label{eqn:pq}
b_\xi(p,q) = -b_\xi(q,p) \text{ for all } \xi\in \partial X \text{ and } p,q\in X.
\end{equation}
Furthermore, $\grad b_\xi(q,p) = -\dot{c}_{q,\xi}(0)$ and is, in particular, independent of $p$.
 For $q \in X$ and $\xi, \eta \in \partial X$ we denote by
$$
\sphericalangle_q(\xi,\eta) = \sphericalangle_q( \dot{c}_{q,\xi}(0), \dot{c}_{q,\eta}(0))= \cos \langle  \grad b_\xi(q,p), \grad b_\eta(q,p) \rangle
$$
the angle of $\xi, \eta $ as seen from $q$ (visibility angle).
\begin{lemma} \label{lem:angle}
Let $X$ be a Hadamard manifold.  Then for any pair of points $\xi, \eta \in \partial X$ the visibility angle $f(q) =\sphericalangle_q(\xi,\eta)$ defines a $C^1$ function
on $q\in X$. If  the sectional curvature $K$ of $X$ satisfies $-\kappa^2 \le K $  for some $\kappa \ge0$, then the norm of the gradient of $f$ is bounded by $\kappa$.
In particular, $f$ is Lipschitz continuous with Lipschitz constant $\kappa$.
\end{lemma}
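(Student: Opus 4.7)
The plan is to express $f$ via the Busemann gradients and exploit the Hessian bound on horospheres. Set $V_\xi:=-\grad b_\xi$ and $V_\eta:=-\grad b_\eta$, so that these are $C^1$ unit vector fields (Busemann functions being $C^2$) pointing toward $\xi$ and $\eta$ respectively, with $\cos f(q)=\langle V_\xi(q),V_\eta(q)\rangle$. Since $\xi\neq\eta$, the inner product stays strictly below $+1$ (equality would force $V_\xi=V_\eta$, hence $\xi=\eta$); and on the complement of the unique geodesic $\gamma_{\xi\eta}$ joining $\xi$ to $\eta$ it also stays strictly above $-1$. On this open set $f=\arccos\langle V_\xi,V_\eta\rangle$ inherits $C^1$ regularity from the smoothness of $\arccos$ on $(-1,1)$.

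For the gradient estimate, I would differentiate $\cos f$ along a tangent vector $w$ at $q$ and use the unit-length identities $\langle \nabla_w V_\xi,V_\xi\rangle=\langle \nabla_w V_\eta,V_\eta\rangle=0$. Writing $V_\eta=\cos f\,V_\xi+\sin f\, U$ with $U\perp V_\xi$ of unit norm, and analogously $V_\xi=\cos f\,V_\eta+\sin f\, U'$, each term in $w(\cos f)=\langle \nabla_w V_\xi,V_\eta\rangle+\langle V_\xi,\nabla_w V_\eta\rangle$ pulls out a factor of $\sin f$:
\[
w(\cos f)=\sin f\bigl(\langle \nabla_w V_\xi,U\rangle+\langle U',\nabla_w V_\eta\rangle\bigr).
\]
Since $\nabla f=-\nabla(\cos f)/\sin f$ wherever $f\in(0,\pi)$, the singular factor cancels and what remains is controlled by $\|\nabla_w V_\xi\|+\|\nabla_w V_\eta\|$, which is precisely the quantity the curvature hypothesis bounds.

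The curvature input enters through the Hessian bound on Busemann functions. The endomorphism $w\mapsto \nabla_w V_\xi=-\nabla_w\grad b_\xi$ equals $-\mathrm{Hess}\,b_\xi$ as a $(1,1)$-tensor; it annihilates $V_\xi$, and its restriction to the horospherical hyperplane $\ker db_\xi$ is the shape operator of the stable horosphere $H_\xi$. Under $K\ge -\kappa^2$, the standard Riccati comparison applied to the stable Jacobi tensor along the ray to $\xi$ (the Heintze--Im Hof estimate referenced above) bounds the principal curvatures of $H_\xi$ by $\kappa$; thus $\|\nabla_w V_\xi\|\le \kappa\|w\|$ and the same bound holds for $V_\eta$. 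Substituting into the displayed identity gives the pointwise gradient estimate on $X\setminus \gamma_{\xi\eta}$, and it upgrades to a Lipschitz estimate on all of $X$ by continuity of $f$ together with the fact that $\gamma_{\xi\eta}$ has codimension at least one, so any two points of $X$ can be joined by a rectifiable path whose interior lies in $X\setminus \gamma_{\xi\eta}$.

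The main obstacle is the Riccati-type bound $\|\mathrm{Hess}\,b_\xi\|_{\rm op}\le \kappa$; this is the only place the curvature hypothesis is used, and it is the classical but nontrivial ingredient. Everything else is algebraic bookkeeping: the orthogonal decomposition of $V_\eta$ against $V_\xi$ that extracts the factor of $\sin f$, and a density argument to push the pointwise bound across the singular locus $\gamma_{\xi\eta}$.
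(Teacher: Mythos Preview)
Your approach is exactly the one the paper sketches in two sentences: $C^2$ regularity of Busemann functions for the $C^1$ claim, and the Heintze--Im Hof bound $0\le \mathrm{Hess}\,b_\xi\le\kappa$ for the gradient estimate. You have merely supplied the algebra the paper omits.

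There is, however, a quantitative gap you slide past. Your displayed identity yields
\[
|w(f)|=\bigl|\langle \nabla_w V_\xi,U\rangle+\langle U',\nabla_w V_\eta\rangle\bigr|\le \|\nabla_w V_\xi\|+\|\nabla_w V_\eta\|\le 2\kappa\|w\|,
\]
so the Lipschitz constant your argument actually delivers is $2\kappa$, not the $\kappa$ asserted in the lemma. This is not repairable by a sharper estimate: in the hyperbolic plane of curvature $-1$, if $\gamma$ is the geodesic joining $\xi$ to $\eta$ and $r=d(q,\gamma)$, the angle-of-parallelism formula gives $\sin\bigl(f(q)/2\bigr)=1/\cosh r$, hence $|df/dr|=2/\cosh r\to 2$ as $r\to 0^+$. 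The sharp constant is therefore $2\kappa$; the lemma as stated (and the paper's one-line derivation) is off by this factor of two. The later applications in the paper only need $f$ to be Lipschitz with some explicit constant depending on the curvature bounds, so they survive with adjusted constants.

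Your caution about the geodesic $\gamma_{\xi\eta}$ is also well founded and goes beyond what the paper says: the same computation shows $f=\pi-2r+O(r^3)$ near $\gamma_{\xi\eta}$, so $f$ genuinely fails to be $C^1$ there, contrary to the lemma's first assertion. Restricting to $X\setminus\gamma_{\xi\eta}$ and recovering the global Lipschitz bound by a density argument, as you do, is the correct repair.
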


\begin{proof}
	Since Busemann functions are $C^2$, it follows from the definition of the visibility angle that $f$ is a $C^1$ function.
Since the Hessian of the Busemann function is positive semi-definite and bounded from above by $\kappa$, we obtain the estimate for the gradient of $f$.
\end{proof}

\subsection{Some trigonometrical estimates for pinched negatively curved Hadamard manifolds}\label{sec:comparison}

In this subsection we present some results related to geodesic triangles, angles, and orthogonal projections in
a Hadamard manifold $X$ with bounded sectional curvature $K$.

In the following, we denote by $\Delta(p_1,p_2,p_3)$ a geodesic triangle with vertices $p_1,p_2,p_3$.
We may also use the notation $\Delta_\bullet(p_1,p_2,p_3)$ to specify in which manifold the triangle lives.
The angles with vertices $p_i$ are denoted by $\alpha_i$ and the lengths of the sides opposite to $\alpha_i$ by $\ell_i$, $i=1,2,3$.

We start by recalling the law of cosine and sine in the case of non-constant negative curvature.

\begin{lemma}\label{lem:tri1}
	Let $X$ be a Hadamard manifold with $-\kappa_2^2 \leq K \leq -\kappa_1^2$ for some $0 < \kappa_1 \leq \kappa_2 <\infty$,
	and let $\Delta(p_1,p_2,p_3)\subset X$ as described above.
	Then
	\begin{equation}\label{eq:lawcosin-kappa1}
		\cosh(\kappa_1 \ell_3) \ge \cosh(\kappa_1 \ell_1)\cosh(\kappa_1 \ell_2) - \sinh(\kappa_1 \ell_1)\sinh(\kappa_1 \ell_2) \cos (\alpha_3),
	\end{equation}
	and
	\begin{equation}\label{eq:lawcosine-kappa2}
		\cosh(\kappa_2 \ell_3) \le \cosh(\kappa_2 \ell_1)\cosh(\kappa_2 \ell_2) - \sinh(\kappa_2 \ell_1)\sinh(\kappa_2 \ell_2) \cos (\alpha_3).
	\end{equation}
	Moreover, if $\alpha_3 = \frac{\pi}{2}$ we have
	\begin{equation}\label{eq:lawsin-kappa1}
		\sin(\alpha_i) \le \frac{\sinh(\kappa_1 \ell_i)}{\sinh(\kappa_1 \ell_3)} \qquad i=1,2,
	\end{equation}
	and
	\begin{equation}\label{eq:lawsin-kappa_2}
		\sin(\alpha_i) \ge \frac{\sinh(\kappa_2 \ell_i)}{\sinh(\kappa_2 \ell_3)} \qquad i=1,2.
	\end{equation}
	Equality in all of the above holds if $\kappa_1=\kappa_2$.
\end{lemma}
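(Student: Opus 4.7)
Proof plan. The four inequalities are standard results of Riemannian comparison geometry. The main tools I would use are the Alexandrov--Toponogov theorem (in both its hinge and same-sides forms) and a Sturm--Rauch Jacobi-field comparison, applied to the model spaces $\HH^2_{-\kappa_1^2}$ and $\HH^2_{-\kappa_2^2}$ of constant sectional curvature $-\kappa_1^2$ and $-\kappa_2^2$, in which the hyperbolic laws of cosines and sines hold with equality.

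For the cosine inequalities \eqref{eq:lawcosin-kappa1} and \eqref{eq:lawcosine-kappa2}, I would apply the hinge form of Toponogov to the hinge $(\ell_1,\ell_2,\alpha_3)$ at $p_3$. The upper curvature bound $K\leq-\kappa_1^2$ forces the opposite side $\ell_3$ in $X$ to be no shorter than its value in $\HH^2_{-\kappa_1^2}$, while the lower bound $K\geq-\kappa_2^2$ forces it to be no longer than in $\HH^2_{-\kappa_2^2}$. Combining these side-length comparisons with the hyperbolic law of cosines in the model (equality in constant curvature) and the monotonicity of $\cosh$ on $[0,\infty)$ yields \eqref{eq:lawcosin-kappa1} and \eqref{eq:lawcosine-kappa2}.

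For the sine inequality \eqref{eq:lawsin-kappa1} with $\alpha_3=\pi/2$, I would use the same-sides form of Toponogov. In the comparison triangle $\tilde\Delta\subset\HH^2_{-\kappa_1^2}$ with the same side lengths, the comparison angles satisfy $\tilde\alpha_i\geq\alpha_i$; in particular $\tilde\alpha_3\geq\pi/2$. Since the hyperbolic angle sum is below $\pi$, the other comparison angles are acute, hence $\alpha_1\leq\tilde\alpha_1<\pi/2$. The hyperbolic law of sines in the model gives $\sin\tilde\alpha_1=\sin\tilde\alpha_3\cdot\sinh(\kappa_1\ell_1)/\sinh(\kappa_1\ell_3)\leq\sinh(\kappa_1\ell_1)/\sinh(\kappa_1\ell_3)$, and monotonicity of $\sin$ on $[0,\pi/2]$ together with $\alpha_1\leq\tilde\alpha_1$ proves \eqref{eq:lawsin-kappa1}.

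The estimate \eqref{eq:lawsin-kappa_2} is the main obstacle: the naive same-sides Toponogov comparison in $\HH^2_{-\kappa_2^2}$ gives only $\sin\alpha_1\geq\sin\tilde\alpha_3\cdot\sinh(\kappa_2\ell_1)/\sinh(\kappa_2\ell_3)$ with $\sin\tilde\alpha_3<1$ generically, which falls short of the claim. To close the gap I would track the perpendicular distance $h(s):=d(c_{p_1,p_2}(s),c_{p_1,p_3})$ along the hypotenuse. The right-angle condition at $p_3$ identifies $p_3$ as the perpendicular foot of $p_2$ on $c_{p_1,p_3}$, so $h(0)=0$ and $h(\ell_3)=\ell_1$. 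Using Fermi coordinates based on $c_{p_1,p_3}$, in which the metric has the form $f(u,h)^2\,du^2+dh^2$ with $f_{hh}+Kf=0$, $f(u,0)=1$, $f_h(u,0)=0$, the Sturm comparison for $K\geq-\kappa_2^2$ yields $f(u,h)\leq\cosh(\kappa_2 h)$. Combining this with the angular-momentum conservation $f^2 u'=\cos\alpha_1$ along the geodesic hypotenuse produces the differential inequality
$$\frac{d}{ds}\sinh(\kappa_2 h(s))\leq\kappa_2\sqrt{\sin^2\alpha_1+\sinh^2(\kappa_2 h(s))}.$$
A standard ODE comparison with the equality solution $\sin\alpha_1\sinh(\kappa_2 s)$ yields $\sinh(\kappa_2 h(s))\leq\sin\alpha_1\sinh(\kappa_2 s)$, which at $s=\ell_3$ becomes \eqref{eq:lawsin-kappa_2}. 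The equality case $\kappa_1=\kappa_2$ corresponds to constant curvature $-\kappa_1^2$, in which all four inequalities degenerate to the classical hyperbolic laws.
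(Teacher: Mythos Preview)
The paper does not actually prove this lemma: it is stated as a recall of the laws of cosines and sines under curvature bounds, without proof. So there is no ``paper's own argument'' to compare against, and the relevant question is whether your proof is correct.

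Your treatment of \eqref{eq:lawcosin-kappa1}, \eqref{eq:lawcosine-kappa2} and \eqref{eq:lawsin-kappa1} via hinge and same-sides Toponogov is standard and correct. The argument for \eqref{eq:lawsin-kappa_2}, however, has a genuine gap. In Fermi coordinates $(u,h)$ along the base geodesic the metric is $f(u,h)^2\,du^2+dh^2$, but in variable curvature $f$ depends on \emph{both} $u$ and $h$; the coordinate $u$ is not cyclic, and a direct computation of the geodesic equations gives
\[
\frac{d}{ds}\bigl(f^2 u'\bigr)=f f_u\,(u')^2,
\]
which vanishes only when $f_u\equiv 0$, i.e.\ in constant curvature. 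Hence the ``angular-momentum conservation'' $f^2 u'=\cos\alpha_1$ you invoke fails precisely in the variable-curvature situation you are trying to cover, and the displayed differential inequality for $\sinh(\kappa_2 h)$ is not justified as written. (The Fermi-coordinate form you use is also two-dimensional, whereas the lemma is stated for arbitrary dimension.)

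A cleaner route to \eqref{eq:lawsin-kappa_2} avoids this entirely. Since $\alpha_3=\pi/2$, the foot of $p_2$ on the complete geodesic $c_2$ through $p_1,p_3$ is $p_3$, so $\ell_1=d(p_2,c_2)$. For each $t\in\R$ apply the hinge form of Toponogov at $p_1$ (lower bound $K\ge-\kappa_2^2$) to the hinge with sides $\ell_3$ and $|t|$ and included angle $\alpha_1$ or $\pi-\alpha_1$; this gives $d(p_2,c_2(t))\le d_{\HH^2_{-\kappa_2^2}}(\tilde p_2,\tilde c_2(t))$ for every $t$. Taking the infimum over $t$ yields $\ell_1\le \tilde h$, where $\tilde h$ is the model distance from $\tilde p_2$ to $\tilde c_2$, and the model right-triangle identity $\sinh(\kappa_2\tilde h)=\sin\alpha_1\,\sinh(\kappa_2\ell_3)$ then gives \eqref{eq:lawsin-kappa_2}. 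This works in every dimension and uses only the hinge comparison you already employed for the cosine inequalities.
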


The above inequalities give us the next lemma.

\begin{lemma}\label{lem:tri2}
Let $X$ be a Hadamard manifold with $K\leq -\kappa_1^2$ for some $\kappa_1>0$ and let $\Delta(p_1 p_2 p_3)$ be as in the beginning of this subsection.
	Then
	\begin{equation}\label{eq:angle-bound}
	\frac{2\alpha_3^2}{\pi^2} \le 2 \sin ^2(\frac{\alpha_3}{2})     \leq \frac{\cosh(\kappa_1 \ell_3) -1}{ \sinh(\kappa_1 \ell_1)\sinh(\kappa_1 \ell_2)}.
	\end{equation}
	Furthermore assume that $\alpha_3= \frac{\pi}{2}$,then
	\begin{equation}\label{eq:cosh-bound}
	\cosh(\kappa_1 \ell_1) \le \frac{1}{\sin (\alpha_2)}
	\end{equation}
	and
	\begin{equation}\label{eq:sin-bound}
	\sin (\alpha_1) \le \frac{\cot(\alpha_2)}{\sinh (\kappa_1 \ell_3)}.
	\end{equation}	
	
	\end{lemma}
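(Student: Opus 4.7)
The plan is to establish the three inequalities in turn by combining the previous lemma's law of cosines and law of sines with standard hyperbolic identities, plus one elementary fact (Jordan's inequality) for the first bound.

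\medskip

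\noindent\textbf{Step 1 (left inequality of \eqref{eq:angle-bound}).} Since $\alpha_3\in[0,\pi]$, the half-angle $\alpha_3/2$ lies in $[0,\pi/2]$, and Jordan's inequality $\sin x\ge 2x/\pi$ on $[0,\pi/2]$ gives $\sin^2(\alpha_3/2)\ge \alpha_3^2/\pi^2$. Doubling yields $\frac{2\alpha_3^2}{\pi^2}\le 2\sin^2(\alpha_3/2)$.

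\medskip

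\noindent\textbf{Step 2 (right inequality of \eqref{eq:angle-bound}).} I rewrite $2\sin^2(\alpha_3/2)=1-\cos\alpha_3$ and clear the denominator, so the claim becomes
\[
\sinh(\kappa_1\ell_1)\sinh(\kappa_1\ell_2)(1-\cos\alpha_3)\le \cosh(\kappa_1\ell_3)-1.
\]
Applying the law of cosines \eqref{eq:lawcosin-kappa1},
\[
\cosh(\kappa_1\ell_3)-1\ge \cosh(\kappa_1\ell_1)\cosh(\kappa_1\ell_2)-\sinh(\kappa_1\ell_1)\sinh(\kappa_1\ell_2)\cos\alpha_3-1,
\]
so it suffices to show $\cosh(\kappa_1\ell_1)\cosh(\kappa_1\ell_2)-\sinh(\kappa_1\ell_1)\sinh(\kappa_1\ell_2)\ge 1$. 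But the left-hand side equals $\cosh(\kappa_1(\ell_1-\ell_2))\ge 1$, which closes the argument.

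\medskip

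\noindent\textbf{Step 3 (inequality \eqref{eq:cosh-bound}).} Assume $\alpha_3=\pi/2$. From \eqref{eq:lawsin-kappa1} with $i=2$, $\sin\alpha_2\le \sinh(\kappa_1\ell_2)/\sinh(\kappa_1\ell_3)$, so the claim reduces to
\[
\cosh(\kappa_1\ell_1)\sinh(\kappa_1\ell_2)\le \sinh(\kappa_1\ell_3).
\]
Since $\cos\alpha_3=0$, the law of cosines \eqref{eq:lawcosin-kappa1} yields $\cosh(\kappa_1\ell_3)\ge\cosh(\kappa_1\ell_1)\cosh(\kappa_1\ell_2)$, hence
\[
\sinh^2(\kappa_1\ell_3)=\cosh^2(\kappa_1\ell_3)-1\ge \cosh^2(\kappa_1\ell_1)\cosh^2(\kappa_1\ell_2)-1\ge \cosh^2(\kappa_1\ell_1)\sinh^2(\kappa_1\ell_2),
\]
where the last step uses $\cosh^2(\kappa_1\ell_2)-1=\sinh^2(\kappa_1\ell_2)$ and $\cosh(\kappa_1\ell_1)\ge 1$. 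Taking square roots finishes the step.

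\medskip

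\noindent\textbf{Step 4 (inequality \eqref{eq:sin-bound}).} Again with $\alpha_3=\pi/2$, I first establish the auxiliary estimate
\begin{equation*}
\cos\alpha_2\,\sinh(\kappa_1\ell_3)\ge \sinh(\kappa_1\ell_1)\sinh(\kappa_1\ell_2).
\end{equation*}
Applying the law of cosines \eqref{eq:lawcosin-kappa1} at the vertex $p_2$ (so that $\ell_2$ is the opposite side and $\ell_1,\ell_3$ the adjacent ones) gives
\[
\sinh(\kappa_1\ell_1)\sinh(\kappa_1\ell_3)\cos\alpha_2\ge \cosh(\kappa_1\ell_1)\cosh(\kappa_1\ell_3)-\cosh(\kappa_1\ell_2).
\]
Using $\cosh(\kappa_1\ell_3)\ge\cosh(\kappa_1\ell_1)\cosh(\kappa_1\ell_2)$ from Step 3, the right-hand side is at least $\sinh^2(\kappa_1\ell_1)\cosh(\kappa_1\ell_2)$, and dividing by $\sinh(\kappa_1\ell_1)$ yields the auxiliary estimate (with an extra factor $\cosh(\kappa_1\ell_2)\ge\sinh(\kappa_1\ell_2)$ to spare). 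Now multiplying \eqref{eq:lawsin-kappa1} for $i=1$ and $i=2$,
\[
\sin\alpha_1\sin\alpha_2\le \frac{\sinh(\kappa_1\ell_1)\sinh(\kappa_1\ell_2)}{\sinh^2(\kappa_1\ell_3)}\le \frac{\cos\alpha_2}{\sinh(\kappa_1\ell_3)},
\]
and dividing by $\sin\alpha_2$ gives \eqref{eq:sin-bound}.

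\medskip

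The only mildly delicate point is Step 4, where one has to combine two applications of the law of sines with a careful use of the law of cosines at the non-right vertex; everything else is essentially a direct rearrangement of Lemma \ref{lem:tri1} together with $\cosh(a-b)\ge 1$ and Jordan's inequality.
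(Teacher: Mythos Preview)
Your proof is correct. Steps 1--3 match the paper's argument essentially line for line (same use of $\cosh(\kappa_1(\ell_1-\ell_2))\ge 1$, Jordan's inequality, and the Pythagorean law of cosines).

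For \eqref{eq:sin-bound} you take a different route. The paper simply reuses the just-established \eqref{eq:cosh-bound}: from \eqref{eq:lawsin-kappa1} with $i=1$,
\[
\sin^2\alpha_1\le \frac{\sinh^2(\kappa_1\ell_1)}{\sinh^2(\kappa_1\ell_3)}=\frac{\cosh^2(\kappa_1\ell_1)-1}{\sinh^2(\kappa_1\ell_3)}\le \frac{1/\sin^2\alpha_2-1}{\sinh^2(\kappa_1\ell_3)}=\frac{\cot^2\alpha_2}{\sinh^2(\kappa_1\ell_3)},
\]
which is a two-line argument. Your path---law of cosines at the non-right vertex $p_2$, combined with the product of both law-of-sines inequalities---also works, and in fact your auxiliary estimate $\cos\alpha_2\,\sinh(\kappa_1\ell_3)\ge \sinh(\kappa_1\ell_1)\cosh(\kappa_1\ell_2)$ is slightly sharper than what you need. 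But the paper's approach is more economical since it leverages \eqref{eq:cosh-bound} directly rather than going back to Lemma~\ref{lem:tri1} a third time.
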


\begin{proof}
We start by proving \eqref{eq:angle-bound}. The addition theorem for hyperbolic functions yields
$$
 \cosh(\kappa_1 \ell_1)\cosh(\kappa_1 \ell_2) =  \sinh(\kappa_1 \ell_1)\sinh(\kappa_1 \ell_2) +  \cosh(\kappa_1( \ell_2- \ell_1)).
 $$
Therefore, by the law of cosine  \eqref{eq:lawcosin-kappa1} we obtain
\begin{align*}
\cosh(\kappa_1 \ell_3)& \ge  \sinh(\kappa_1 \ell_1)\sinh(\kappa_1 \ell_2)(1- \cos (\alpha_3)) +  \cosh(\kappa_1( \ell_2- \ell_1))\\
& \ge \sinh(\kappa_1 \ell_1)\sinh(\kappa_1 \ell_2)(1- \cos (\alpha_3)) +1.
\end{align*}
Using $ 1- \cos (\alpha_3) = 2 \sin^2 ( \frac{\alpha_3}{2})$ and the inequality  $\sin \alpha \ge \frac{2}{\pi}\alpha $ for all $\alpha \in [0, \frac{\pi}{2}]$
we obtain \eqref{eq:angle-bound}.

We now turn to the proof of \eqref{eq:cosh-bound} and \eqref{eq:sin-bound}. The law of cosine \eqref{eq:lawcosin-kappa1} with $\alpha_3=\pi/2$ and \eqref{eq:lawsin-kappa1} 	
	yield
	\begin{align*}
		\cosh^2(\kappa_1 \ell_1) & \le \frac{\cosh^2(\kappa_1 \ell_3)}{\cosh^2(\kappa_1 \ell_2)} =\frac{\cosh^2(\kappa_1 \ell_3)}{1 +\sinh^2(\kappa_1 \ell_2)} \\
		&\le \frac{\cosh^2(\kappa_1 \ell_3)}{1 + \sin^2( \alpha_2)\sinh^2(\kappa_1 \ell_3)} =\frac{1+\sinh^2(\kappa_1 \ell_3)}{1 + \sin^2( \alpha_2)\sinh^2(\kappa_1 \ell_3)}\\
		&\le \frac{1}{\sin^2(\alpha_2)},
	\end{align*}
	which proves \eqref{eq:cosh-bound}. Equation \eqref{eq:sin-bound} is a consequence of \eqref{eq:lawsin-kappa1} and the estimate just proved:
	\begin{align*}
		\sin^2 (\alpha_1) &\le  \frac{\sinh^2(\kappa_1  \ell_1)}{\sinh^2( \kappa_1 \ell_3)} = \frac{\cosh^2(\kappa_1  \ell_1) -1}{\sinh^2( \kappa_1 \ell_3)}\\
		&\le \frac{\frac{1}{\sin^2(\alpha_2)} -1}{\sinh^2( \kappa_1 \ell_3)} = \frac{1 - \sin^2(\alpha_2)}{\sin^2(\alpha_2)\sinh^2(\kappa_1 \ell_3)} \\
		&= \frac{\cos^2(\alpha_2)}{\sin^2(\alpha_2)\sinh^2(\kappa_1 \ell_3)}=  \frac{\cot^2(\alpha_2)}{\sinh^2 (\kappa_1 \ell_3)}.
	\end{align*}
\end{proof}

The corollary below gives relations among sides, angles, and distances from a vertex to the opposite side in a geodesic triangle, assuming
that one of the angles is large enough and the lengths of the sides starting at its vertex are bounded from below.

\begin{corollary}\label{cor:tri}
	Let $X$ be a Hadamard manifold with $K\leq -\kappa_1^2$ for some $\kappa_1>0$, $\Delta(p_1, p_2, p_3)$ be as in  at the beginning of this subsection,
	and assume $\alpha_3 \in [\pi - \eps,\pi ]$ for some $\eps\in(0,\frac{\pi}{2})$.
	Set $a(\eps)= \frac{1}{\kappa_1} \arcosh \left(\frac{1}{\cos (\eps)}  \right)$ and denote further by $c_1$ the geodesic
	connecting $p_3$ and $p_2$, by $c_2$ the geodesic connecting $p_3$ and $p_1$,
	and by $c_3$ the geodesic connecting $p_1$ and $p_2$.
	Then
	\begin{equation}\label{eq:distance1}
	c_1([0, \ell_1]), c_2([0, \ell_1])\in \left\{q\in X \,\vert\, d(q,c_3)\le a\big( \frac{\eps}{2}\big) \right\}.
	\end{equation}
	Assume furthermore that $\ell_i \ge R_i$ for some $R_i>0$, $i=1,2$, then
	\begin{equation}\label{eq:distance2}
	\ell_3 \ge R_1+R_2 -2a\Big(\frac{\eps}{2}\Big),
	\end{equation}
	and
	\begin{equation}\label{eq:sin}
	\sin(\alpha_i) \le \tan(\eps)\sinh^{-1}(\kappa_1 R_{j}) \quad i,j\in \{1,2\}, i\neq j.
	\end{equation}	
\end{corollary}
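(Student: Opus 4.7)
The plan is to drop a perpendicular from $p_3$ onto the geodesic carrying $c_3$ and split $\Delta(p_1,p_2,p_3)$ into two right subtriangles to which the estimates of Lemma \ref{lem:tri2} apply directly. As the angle sum of a triangle in a Hadamard manifold is at most $\pi$, we have $\alpha_1+\alpha_2\leq \pi-\alpha_3\leq\eps<\pi/2$, so both base angles are strictly acute. The first variation formula then forces the foot $q_0$ of the perpendicular from $p_3$ to the complete geodesic supporting $c_3$ to lie strictly between $p_1$ and $p_2$: a minimum of the convex function $d(\cdot,p_3)|_{c_3}$ at an endpoint $p_i$ would force $\alpha_i\geq\pi/2$. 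Set $h:=d(p_3,q_0)$, $s_1:=d(q_0,p_1)$, $s_2:=d(q_0,p_2)$, and let $\gamma_i$ denote the angle at $p_3$ in the right subtriangle $\Delta(p_3,q_0,p_i)$, so that $\ell_3=s_1+s_2$ and $\gamma_1+\gamma_2=\alpha_3$.

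The key observation is twofold. Each $\gamma_i<\pi/2$ (as a non-right angle in a right triangle with a positive third angle), so $\gamma_i=\alpha_3-\gamma_{3-i}\geq (\pi-\eps)-\pi/2=\pi/2-\eps$ for both $i=1,2$. Moreover, by pigeonhole at least one of them, say $\gamma_1$, satisfies $\gamma_1\geq \alpha_3/2\geq\pi/2-\eps/2$.

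To establish \eqref{eq:distance1}, I apply \eqref{eq:cosh-bound} to the right subtriangle $\Delta(p_3,q_0,p_1)$, where the right angle sits at $q_0$, the angle $\gamma_1$ at $p_3$, and the leg opposite $\alpha_1$ has length $h$:
\[
\cosh(\kappa_1 h)\leq \frac{1}{\sin\gamma_1}\leq \frac{1}{\cos(\eps/2)},
\]
so $h\leq a(\eps/2)$. Since in a Hadamard manifold the distance to a complete geodesic is convex, the map $s\mapsto d(c_1(s),c_3)$ is convex on $[0,\ell_1]$ and is therefore bounded by its boundary values $d(p_2,c_3)=0$ and $d(p_3,c_3)\leq h\leq a(\eps/2)$; the same reasoning applied to $c_2$ yields the claimed inclusion.

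For \eqref{eq:distance2}, the ordinary triangle inequality inside the right subtriangles gives $\ell_2\leq s_1+h$ and $\ell_1\leq s_2+h$, hence $s_1\geq R_2-a(\eps/2)$ and $s_2\geq R_1-a(\eps/2)$, and summing yields $\ell_3=s_1+s_2\geq R_1+R_2-2a(\eps/2)$. For \eqref{eq:sin}, I apply \eqref{eq:sin-bound} to $\Delta(p_3,q_0,p_1)$, whose hypotenuse has length $\ell_2\geq R_2$, using this time the symmetric but weaker estimate $\gamma_1\geq\pi/2-\eps$:
\[
\sin\alpha_1\leq \frac{\cot\gamma_1}{\sinh(\kappa_1\ell_2)}\leq \frac{\tan\eps}{\sinh(\kappa_1 R_2)},
\]
and symmetrically for $\alpha_2$ using $\Delta(p_3,q_0,p_2)$. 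The only subtlety is to draw two different consequences from $\gamma_1+\gamma_2\geq\pi-\eps$: the pigeonhole lower bound $\gamma_1\geq\pi/2-\eps/2$ produces the sharper altitude constant $a(\eps/2)$ needed for \eqref{eq:distance1}--\eqref{eq:distance2}, while the uniform (but weaker) bound $\gamma_i\geq\pi/2-\eps$ valid for both $i$ is precisely what yields $\tan\eps$ (rather than $\tan(\eps/2)$) for both angles in \eqref{eq:sin}.
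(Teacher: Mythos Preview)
Your proof is correct and follows essentially the same approach as the paper's: drop the perpendicular from $p_3$ onto $c_3$, split $\Delta(p_1,p_2,p_3)$ into two right subtriangles, and apply the estimates \eqref{eq:cosh-bound} and \eqref{eq:sin-bound} of Lemma~\ref{lem:tri2} together with the convexity of the distance to $c_3$. Your justification that the foot $q_0$ lies strictly between $p_1$ and $p_2$ (via acuteness of the base angles and first variation) and your argument that each $\gamma_i<\pi/2$ (via the angle sum in a right triangle) are slightly more explicit than the paper's, but the logic and the use of the two different lower bounds $\gamma_i\geq\pi/2-\eps/2$ (pigeonhole) versus $\gamma_i\geq\pi/2-\eps$ (uniform) are identical.
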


\begin{proof}
	We first observe that the functions $t \mapsto d(c_1(t) , c_3)$ and $t \mapsto  d(c_2(t) , c_3)$  are convex and therefore
	$$
	\max_{q \in c_1\cup c_2} d(q, c_3)= d (p_3,c_3).
	$$
	Since $\alpha_3 \ge \pi/2$, the foot-point projection  $p$  of $p_3$ onto $c_3$  is contained in the geodesic segment between $p_1$ and $p_2$.
	Consider the angles
	$\varphi_1 =\sphericalangle_{p_3}(p_1, p)$ and $ \varphi_2 =\sphericalangle_{p_3}(p_2, p)$.
	Since at least one of the angles $\varphi_i$ is bigger or equal than
	$\frac{\alpha_3}{2}$, and $\frac{\pi}{2} - \frac{\eps}{2} \le \frac{ \alpha_3}{2}  \le \frac{\pi}{2}$, Eq. \eqref{eq:cosh-bound} in Lemma \ref{lem:tri2} implies
	\begin{align*}
	d(p_3,c_3) & =d(p_3,p) \le \frac{1}{\kappa_1}\arcosh\left( \frac{1}{\sin(\varphi_i)}\right) \\
	 & \leq \frac{1}{\kappa_1} \arcosh \left(\frac{1}{\sin \left(\frac{ \alpha_3}{2} \right)}  \right) \le  \frac{1}{\kappa_1} \arcosh \left(\frac{1}{ \cos \left(\frac{\eps}{2}\right)}   \right) = a\big(\frac{\eps}{2}\big),
	\end{align*}
	proving \eqref{eq:distance1}. The equation \eqref{eq:distance2} is a direct consequence of the triangle inequality:
	\[
	\ell_3 = d(p_1,p) +d(p,p_2)\geq R_1 -a(\eps/2) +R_2-a(\eps/2).
	\]
	
	We now turn to the proof of \eqref{eq:sin}. Eq. \eqref{eq:sin-bound} applied to $\Delta(p_i, p_3,p)$, $i=1,2$, yields
	\[
	\sin(\alpha_i) \le \frac{\cot (\varphi_i)}{\sinh(\kappa_1 R_{j} )} 
	\]
	for $i,j\in \{1,2\}$ with $i\neq j$. Since $\pi - \eps \le \varphi_1+ \varphi_2 \le \pi $ and $ \varphi_i \le \frac{\pi}{2}$, we obtain
	$\varphi_2 \ge \pi  - \eps - \varphi_1 \ge \frac{\pi}{2} - \eps $ and similarly $\varphi_1 \ge  \frac{\pi}{2} - \eps $.
	Therefore, we obtain
	\[
	\cot (\varphi_i) \le \cot(\frac{\pi}{2} - \eps) = \tan \eps,
	\]
	which implies the claim.
\end{proof}

%
%

The next lemma tells us that the angle between points at infinity is small if their corresponding vectors are close w.r.t. to the metric on $SX$ defined by
$$
d_{1}(v,w)=\max_{t\in[-1,1]} d(c_v(t), c_w(t)),
$$
where the metric $d$ is induced by the Riemannian metric on $X$. Note that by definition the metric is flip-invariant, i.e. $d_{1}(v,w) = d_{1}(-v,-w)$.

\begin{lemma}\label{lemma:angles-at-infinity}
	Let $X$ be a Hadamard manifold with sectional curvature $ K \leq -\kappa_1^2$ for some $\kappa_1 > 0$ and let $SX$ be endowed with the metric $d_1$ defined above.
	Let $v,w\in SX$ such that $d_{1}(v,w) \leq \delta$ for $\delta\in \big(0,\frac{1}{2}\big]$.
	Then for $p = \pi w$ we have
	\[
	\sphericalangle_{p}(c_v(+\infty), c_w(+\infty) ) \leq  f(\delta) \; \text {and } \; \sphericalangle_{p}(c_v(-\infty), c_w(-\infty) ) \leq  f(\delta),
	\]
	where $ f(\delta) =  2\arcsin\Big( \frac{\sinh(\kappa_1\delta)}{\sinh(\kappa_1(1-\delta))}\Big)$.
\end{lemma}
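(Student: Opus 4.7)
\emph{Plan.} I would establish the $c_v(+\infty)$ bound first and then deduce the $c_v(-\infty)$ bound by applying the result to the flip-symmetric pair $(-w,-v)$, using that $d_1(-w,-v)=d_1(w,v)$ and $c_{-v}(+\infty)=c_v(-\infty)$. Set $p=\pi w$ and $\eta=c_v(+\infty)$, let $v'\in S_pX$ be the initial direction of the unique ray $c':=c_{p,\eta}$ from $p$ to $\eta$ (Lemma~\ref{lem:p-to-xi}), and let $v_1\in S_pX$ be the initial direction of the geodesic segment from $p$ to $c_v(1)$. Since the tangent sphere $S_pX$ carries the round metric, angles obey the triangle inequality, so
$$
\sphericalangle_{p}(c_v(+\infty),c_w(+\infty))
=\sphericalangle_{p}(w,v')
\le \sphericalangle_{p}(w,v_1)+\sphericalangle_{p}(v_1,v'),
$$
and the task reduces to bounding each summand by $f(\delta)/2=\arcsin\!\bigl(\sinh(\kappa_1\delta)/\sinh(\kappa_1(1-\delta))\bigr)$, via the law of sines \eqref{eq:lawsin-kappa1} applied to a right triangle obtained by foot-point projection.

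For the first summand, let $p^{*}=c_w(s^{*})$ be the foot-point of $c_v(1)$ onto the complete geodesic $c_w$. The assumption $d_1(v,w)\le\delta$ gives $d(p^{*},c_v(1))\le d(c_w(1),c_v(1))\le\delta$, and the reverse triangle inequality gives $d(p,c_v(1))\ge 1-\delta$. The convex function $s\mapsto d(c_w(s),c_v(1))$ takes value at least $1-\delta$ at $s=0$ and at most $\delta$ at $s=1$, so $\delta\le 1/2$ forces $s^{*}\ge 0$; hence the direction from $p$ to $p^{*}$ is $w$, and the triangle $\Delta(p,p^{*},c_v(1))$ has its right angle at $p^{*}$. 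Inequality \eqref{eq:lawsin-kappa1} then yields
$$
\sin\sphericalangle_{p}(w,v_1)\le \frac{\sinh(\kappa_1\,d(p^{*},c_v(1)))}{\sinh(\kappa_1\,d(p,c_v(1)))}\le \frac{\sinh(\kappa_1\delta)}{\sinh(\kappa_1(1-\delta))},
$$
and since the foot-point condition forces $\sphericalangle_{p}(w,v_1)\le\pi/2$, this gives $\sphericalangle_{p}(w,v_1)\le f(\delta)/2$.

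For the second summand, the key point is that $c'$ and $c_v$ are both asymptotic to $\eta$, so $t\mapsto d(c'(t),c_v(t))$ is convex and bounded on $[0,\infty)$. Any bounded convex function on a half-line is non-increasing (otherwise its slope, which is nondecreasing, would eventually be positive and force linear blow-up), hence
$$
d(c'(1),c_v(1))\le d(c'(0),c_v(0))=d(p,c_v(0))\le\delta.
$$
Repeating the foot-point argument with $c'$ in place of $c_w$ and with the foot-point $q^{*}=c'(r^{*})$ of $c_v(1)$ onto $c'$ (where again $r^{*}\ge 0$ by the same convexity reasoning together with $\delta\le 1/2$) gives $\sphericalangle_{p}(v_1,v')\le f(\delta)/2$. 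Summing the two estimates finishes the $+\infty$ case, and the $-\infty$ case follows by flip-symmetry as outlined above. The step I expect to be the main obstacle is the asymptotic comparison $d(c'(1),c_v(1))\le\delta$: once this is in hand, both summands are controlled by identical right-triangle applications of \eqref{eq:lawsin-kappa1}, and the remainder of the argument is routine.
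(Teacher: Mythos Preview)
Your argument is correct and uses the same two-angle decomposition as the paper: both split $\sphericalangle_p(c_v(+\infty),c_w(+\infty))$ into $\sphericalangle_p(w,v_1)$ (the paper's $\beta$) plus the residual angle towards $c_v(+\infty)$ (the paper's $\lim_{s\to\infty}\tilde\alpha(s)$), and both handle the first summand identically, via \eqref{eq:lawsin-kappa1} in the right triangle obtained by projecting $c_v(1)$ onto $c_w$. The second summand is where you diverge. The paper bounds the angle $\eta:=\sphericalangle_{c_v(1)}(p,c_v(0))$ at the \emph{other} vertex (projecting $p$ onto $c_v$) and then transfers this back to $p$ via the angle-sum inequality in $\Delta(p,c_v(1),c_v(s))$, obtaining $\tilde\alpha(s)\le\eta$. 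You instead stay at $p$: introducing the asymptotic ray $c'=c_{p,\eta}$ and using that $t\mapsto d(c'(t),c_v(t))$ is convex and bounded, hence non-increasing, to get $d(c'(1),c_v(1))\le\delta$, after which the second right-triangle estimate is formally identical to the first. Both routes are short; the paper's avoids the auxiliary ray and the monotonicity fact for asymptotic geodesics, while yours has the aesthetic advantage of treating the two summands symmetrically and never leaving the vertex $p$.

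One small correction: for the $-\infty$ bound you should apply the $+\infty$ case to the pair $(-v,-w)$ rather than $(-w,-v)$, so that the base point remains $\pi(-w)=\pi w=p$; with the swapped ordering the conclusion would be at $\pi v$ instead.
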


\begin{remark}\label{rmk:angle-at-infinity-approx}
	Since the function $f(\delta)$
	is convex and attains its maximum at $\frac{1}{2}$ with value $\pi$,
	it holds
	\[
	f(\delta)\leq 2\pi \delta \qquad \forall\delta\in\Big(0,\frac{1}{2}\Big].
	\]
\end{remark}
\begin{proof}
	By the flip-invariance of $d_1$ it is enough to prove  $\sphericalangle_{p}(c_v(+\infty), c_w(+\infty) ) \leq  f(\delta)$.
	Let $s>1$ and set $\alpha(s):=\sphericalangle_{c_w(0)}(c_w(+\infty), c_v(s))$, $\beta:=\sphericalangle_{c_w(0)}(c_v(1), c_w(1))$ and
	$\tilde\alpha(s)=\sphericalangle_{c_w(0)}(c_v(1), c_v(s))$.  Then
	\[
	\sphericalangle_{p}(c_v(+\infty), c_w(+\infty) )=\lim_{s\to\infty} \alpha(s) \leq \lim_{s\to\infty} \tilde\alpha(s) +\beta.
	\]
	It is enough to show that $\beta+\tilde \alpha(s)$ is bounded by $f(\delta):=2\arcsin\Big( \frac{\sinh(\kappa_1\delta)}{\sinh(\kappa_1(1-\delta))}\Big)$.
	
	By assumption we have $d(c_v(0), c_w(0)) \leq \delta$ and $d(c_v(1), c_w(1)) \leq \delta$.
	Let $q$ be the foot-point projection of $c_v(1)$ onto $c_w$ and consider the triangle $\Delta(c_w(0), q, c_v(1))$.
	Then we have $\beta=\sphericalangle_{c_w(0)}(c_v(1), q)$ and by \eqref{eq:lawsin-kappa1} we have
	\[
	\sin(\beta)\leq \frac{\sinh(\kappa_1 d(c_v(1), q))}{\sinh(\kappa_1 d(c_w(0), c_v(1)))}.
	\]
	By the triangle inequality we have
	\[
	d(c_w(0), c_v(1))\geq  d(c_w(0), c_w(1)) - d(c_w(1), c_v(1)) \geq 1-\delta
	\]
	and therefore we obtain
	\[
	\sin(\beta)\leq \frac{\sinh(\kappa_1\delta)}{\sinh(\kappa_1(1-\delta))},
	\]
	which implies $\beta\leq \frac{1}{2}f(\delta)$.

For the angle $\eta:=\sphericalangle_{c_v(1)}(c_w(0), c_v(0))$, we have
$\eta \leq \frac{1}{2} f(\delta)$,
	with a similar argument as before considering the triangle $\Delta(c_w(0), c_v(1), q_1)$, where $q_1$ is the foot-point projection of $c_w(0)$ onto $c_v$.
	
	Consider now the triangle $\Delta(c_w(0), c_v(s), c_v(1))$ with angles $\tilde\alpha(s)$, $\eta_1$ and $\gamma$ at the points $c_w(0), c_v(1)$ and $c_v(s)$, respectively.
	Since $\eta_1$ is the complement of the angle $\eta$ and $\tilde\alpha(s)+\eta_1+\gamma\leq \pi$, we have $\pi-\eta+\tilde\alpha(s)+\gamma\leq \pi$,
	which implies $\tilde\alpha(s)\leq \eta$.
	
	We conclude
	\[
	\beta+\tilde\alpha(s)\leq f(\delta)
	\]
	for all $s>1$ and the claim follows.
\end{proof}

We conclude this subsection with a result for orthogonal projections on geodesics in $2$-dimensional Hadamard manifolds.

\begin{figure}[htb]\label{fig:polygons-comparison}
	\def\svgwidth{\linewidth}
	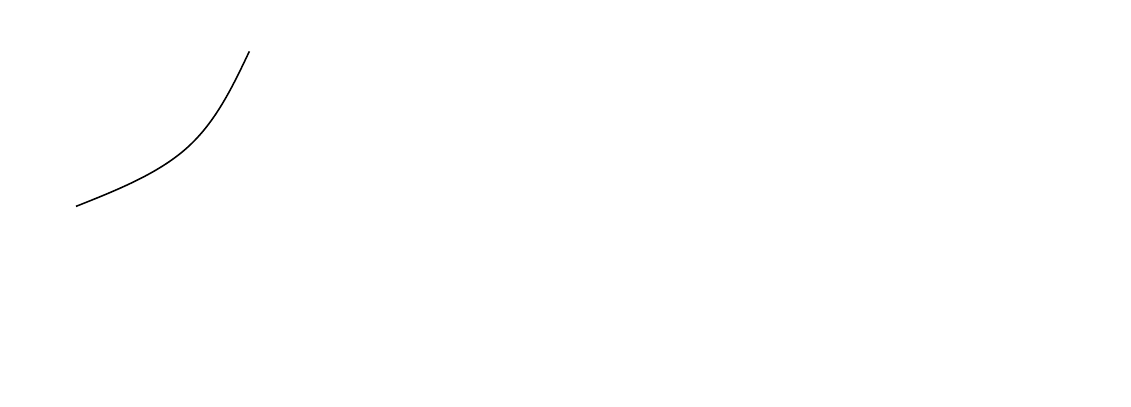
	\caption{The $4$-gons in Lemma \ref{lem:perp-proj}.}
\end{figure}

\begin{lemma}\label{lem:perp-proj}
	Let $X$ be a Hadamard manifold of dimension $2$ with sectional curvature $ -\kappa_2^2 \leq K < 0$ for some $\kappa_2>0$.
	Let $c$ be a geodesic in $M$ and let $x_1\neq x_2\in X$ not lying on $c$ and such that the geodesic connecting $x_1$ and $x_2$ does not cross the geodesic $c$.
	Let further $r_i= d(x_i, c)=d(x_i, c(t_i))$ for $i=1,2$, then
	\begin{multline}\label{eq:perp-proj}
		\sinh^2\big(\kappa_2 \frac{d(x_1,x_2)}{2}\big) \leq \cosh(\kappa_2 r_1)\cosh(\kappa_2 r_2) \sinh^2\big( \kappa_2\frac{\vert t_2-t_1\vert}{2}\big)\\
		+\sinh^2\big(\kappa_2 \frac{\vert r_2-r_1\vert}{2}\big),
	\end{multline}
	with equality if $K=\kappa_2$.
\end{lemma}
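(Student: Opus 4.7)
The plan is to use Fermi coordinates along $c$ to reduce the statement to a one-dimensional Sturm comparison for the metric coefficient, and then to recognize the right-hand side as the explicit distance formula for the corresponding right-angled quadrilateral in the model plane $\tilde X$ of constant curvature $-\kappa_2^2$.

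Since $X$ is a two-dimensional Hadamard manifold, the normal exponential map
\[
\Phi\colon \R^2\to X, \qquad \Phi(t,r):=\exp_{c(t)}\bigl(rN(t)\bigr),
\]
with $N$ a continuous unit normal field along $c$, is a diffeomorphism, and in these Fermi coordinates $\Phi^*g = dr^2 + \phi(t,r)^2\,dt^2$ where $\phi(t,0)\equiv 1$, $\partial_r\phi(t,0)\equiv 0$, and $\phi$ satisfies the Jacobi equation $\partial_r^2\phi + K\phi = 0$. Since $K\ge -\kappa_2^2$ and $\phi\ge 0$ on $\{r\ge 0\}$, one has $\partial_r^2\phi \le \kappa_2^2\phi$, and a Wronskian comparison with $u(r)=\cosh(\kappa_2 r)$, the solution of $u''=\kappa_2^2 u$ with $u(0)=1$, $u'(0)=0$, yields
\[
\phi(t,r)\le \cosh(\kappa_2 r) \qquad (t\in\R,\ r\ge 0).
\]

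In the model $\tilde X$ of constant curvature $-\kappa_2^2$, the analogous Fermi coordinates around a geodesic $\tilde c$ realize the metric exactly as $d\tilde r^2+\cosh^2(\kappa_2\tilde r)\,d\tilde t^2$. Denote by $q_i$ the points with Fermi coordinates $(t_i,r_i)$; the hypothesis that the geodesic joining $x_1$ and $x_2$ does not cross $c$ lets us take $r_1,r_2\ge 0$, and since two distinct geodesics in a Hadamard manifold meet in at most one point, the unit-speed geodesic $\tilde\sigma\colon[0,\tilde L]\to \tilde X$ from $q_1$ to $q_2$ also remains in $\{\tilde r\ge 0\}$. Writing $\tilde\sigma(s)=(\tilde t(s),\tilde r(s))$ and setting $\sigma(s):=\Phi(\tilde t(s),\tilde r(s))$, the resulting curve in $X$ from $x_1$ to $x_2$ satisfies, by the bound on $\phi$ applied pointwise inside the length integral,
\[
d(x_1,x_2)\;\le\; L(\sigma)\;\le\; L(\tilde\sigma)\;=\;d_{\tilde X}(q_1,q_2),
\]
with equality throughout when $K\equiv -\kappa_2^2$.

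Finally, $d_{\tilde X}(q_1,q_2)$ is evaluated by an elementary hyperbolic computation: the hyperboloid model (or equivalently a decomposition of the right-angled quadrilateral $\tilde c(t_1)\tilde c(t_2)q_2q_1$ along the diagonal $\tilde c(t_1)q_2$ together with the hyperbolic law of cosines applied twice) yields
\[
\cosh\bigl(\kappa_2\, d_{\tilde X}(q_1,q_2)\bigr) = \cosh(\kappa_2 r_1)\cosh(\kappa_2 r_2)\cosh(\kappa_2|t_2-t_1|) - \sinh(\kappa_2 r_1)\sinh(\kappa_2 r_2).
\]
Subtracting $1$ and grouping via $\cosh a\cosh b-\sinh a\sinh b=\cosh(a-b)$ and $\cosh x-1=2\sinh^2(x/2)$ turns this into exactly the right-hand side of~\eqref{eq:perp-proj} with $d_{\tilde X}(q_1,q_2)$ in place of $d(x_1,x_2)$, and the monotonicity of $\sinh$ on $[0,\infty)$ combined with the previous step closes the proof. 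The main technical point is the Sturm-type bound $\phi\le\cosh(\kappa_2\,\cdot)$ together with checking that the comparison geodesic really pulls back into the valid chart; once those are in place the remainder is pure hyperbolic trigonometry.
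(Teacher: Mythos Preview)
Your proof is correct and takes a genuinely different route from the paper's. The paper first establishes the equality in constant curvature $-\kappa_2^2$ by splitting the quadrilateral $c(t_1)\,c(t_2)\,x_2\,x_1$ along the diagonal $x_1c(t_2)$ and applying the hyperbolic laws of sine and cosine; for variable curvature it then builds two Toponogov comparison triangles in $X_{\kappa_2}$, glues them along their common side, checks via a law-of-sines argument that the angle at the glued vertex satisfies $\alpha'\le\alpha$ (hence $\beta'\ge\beta$), and finishes with one more cosine law to get $d_{X_{\kappa_2}}(p_2,q_3)\ge d_X(x_1,x_2)$. You instead work in Fermi coordinates along $c$, use the Sturm bound $\phi(t,r)\le\cosh(\kappa_2 r)$ on the metric coefficient, and pull back the model geodesic to produce a competitor curve in $X$ whose length dominates $d(x_1,x_2)$ and equals the model distance. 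Your method is shorter and entirely avoids the angle bookkeeping at the glued vertex; the paper's approach, on the other hand, stays within the standard triangle-comparison toolkit and never needs to set up coordinates or invoke Jacobi fields.
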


\begin{proof}
	We first consider the case $K=-\kappa^2=-\kappa_2^2$.
	The general case follows by a comparison argument.
	
	Assume that $t_2> t_1$. Set $s:= d(x_1, c(t_2))$ and let $\alpha= \sphericalangle_{c(t_2)}(c(t_1), x_1)$.
	Since the angle at $c(t_1)$ is equal to $\pi/2$ we have
	\[
	\cosh(\kappa s) = \cosh(\kappa (t_2-t_1))\cosh(\kappa r_1).
	\]
	Since $\dim X = 2$ we have $\beta:= \sphericalangle_{c(t_2)}(x_2,x_1) = \pi/2 -\alpha$ and so
	\[
	\cos(\beta)= \sin(\alpha)= \frac{\sinh(\kappa r_1)}{\sinh(\kappa s)}
	\]
	by law of sine in negative constant curvature.
	By the law of cosine we also have
	\[
	\cos(\beta)= \frac{\cosh(\kappa s)\cosh(\kappa r_2) - \cosh(\kappa d(x_1,x_2))}{\sinh(\kappa s)\sinh(\kappa r_2)}.
	\]
	We therefore obtain
	\begin{align*}
		\sinh(\kappa r_1)\sinh(\kappa r_2) & = \cosh(\kappa s)\cosh(\kappa r_2) - \cosh(\kappa d(x_1,x_2))\\
		& = \cosh(\kappa r_1) \cosh(\kappa r_2)\cosh(\kappa(t_2-t_1)) -\cosh(\kappa d(x_1,x_2)).
	\end{align*}
	Using $\cosh(x)=2\sinh^2(x/2) +1$ for $x=\kappa (t_2-t_1)$ and $x= \kappa d(x_1,x_2)$ we obtain
	\begin{multline*}
		\sinh(\kappa r_1)\sinh(\kappa r_2) = 2\cosh(\kappa r_1) \cosh(\kappa r_2)\sinh^2\big( \kappa \frac{t_2-t_1}{2}\big)\\
		+\cosh(\kappa r_1) \cosh(\kappa r_2)	-2\sinh^2\big( \kappa \frac{d(x_1,x_2)}{2}\big) -1.
	\end{multline*}
	Since $\cosh( \kappa \vert r_2-r_1\vert) = \cosh(\kappa r_1)\cosh(\kappa r_2) -\sinh(\kappa r_1)\sinh(\kappa r_2)$, and using again
	$\cosh(\kappa \vert r_2-r_1\vert)=2\sinh(\kappa \vert r_2-r_1\vert/2) +1$ the claim follows.
	
	\medskip
	
	Let now $X$ be with sectional curvature $ -\kappa_2^2 \leq K \leq 0$. Consider the triangles $\Delta_X( c(t_1), x_1, c(t_2))$ and $\Delta_X(c(t_2), x_2, c(t_1))$.
	Denote by $X_{\kappa_2}$ the Hadamard manifold of the same dimension with sectional curvature equal to $-\kappa_2^2$. Consider the triangle
	$\Delta_{X_{\kappa_2}}( p_1,p_2,p_3)$ such that
	\[
	\sphericalangle p_1= \sphericalangle c(t_1)=\pi/2, \quad d_{X_{\kappa_2}}(p_1,p_2)= d_X(c(t_1), x_1)=r_1, \quad d_{X_{\kappa_2}}(p_1,p_3)= t_2-t_1
	\]
	(we again assume $t_2>t_1$). By Toponogov's theorem
	\[
	d_{X_{\kappa_2}}(p_2,p_3)\geq d_X(c(t_2),x_1).
	\]
	Similarly consider the triangle $\Delta_{X_{\kappa_2}}(q_1, q_2, q_3)$ such that
	\[
	\sphericalangle q_2=\sphericalangle c(t_2), \quad d_{X_{\kappa_2}}(q_2,q_1)=t_2-t_1, \quad d_{X_{\kappa_2}}(q_2, q_3)= d_X(c(t_2), x_2)=r_2.
	\]
	Again we have
	\[
	d_{X_{\kappa_2}}(q_1,q_3)\geq d_X(c(t_1), x_2).
	\]	
	Glue $\Delta_{X_{\kappa_2}}(q_1,q_2,q_3)$ and $\Delta_{X_{\kappa_2}}(p_1,p_2,p_3)$ along the edge with common length
	such that $q_2=p_3$ and $q_1=p_1$ and so that the points $q_2=p_3$, $q_1=p_1$, $p_2$ and $q_3$ are vertices of a $4$-gon in $X_{\kappa_2}$,
	see Figure \ref{fig:polygons-comparison}.
		
	Let now $\alpha'=\sphericalangle_{p_3} (p_1,p_2)$ and $\alpha=\sphericalangle_{c(t_2)}(c(t_1),x_1)$. We have
	\[
	\sin(\alpha')= \frac{\sinh(\kappa_2 r_1)}{\sinh(\kappa_2 d_{X_{\kappa_2}}(p_2,p_3))}
	\leq \frac{\sinh(\kappa_2 r_1)}{\sinh(\kappa_2 d_X(c(t_2),x_1)}\leq \sin(\alpha).
	\]
	Since $\alpha', \alpha < \pi/2$ and sine is increasing on $[0,\pi/2]$, we conclude $\alpha' \leq \alpha$.
	Setting $\beta = \sphericalangle_{c(t_2)}(x_1,x_2)$ and $\beta'= \sphericalangle_{q_2}(p_2,q_3)$,
	we have $\beta'=\pi/2-\alpha'\geq \pi/2-\alpha=\beta$.
	Therefore
	\begin{align*}
		\cosh(\kappa_2 d_{X_{\kappa_2}}(p_2,q_3)) &= \cosh(\kappa_2 r_2)\cosh(\kappa_2 d_{X_{\kappa_2}}(p_2,p_3)) \\
		&\hspace{1cm}-\sinh(\kappa_2 r_2)\sinh(\kappa_2 d_{X_{\kappa_2}}(p_2,p_3))\cos(\beta')\\
		& \geq 	\cosh(\kappa_2 r_2)\cosh(\kappa_2 d_{X}(c(t_2), x_1))\\
		&\hspace{1cm} -\sinh(\kappa_2 r_2)\sinh(\kappa_2 d_{X}(c(t_2), x_1))\cos(\beta)\\
		&\geq \cosh( \kappa_2 d_X(x_1,x_2)),
	\end{align*}
	and we conclude $d_{X_{\kappa_2}}(p_2,q_3)\geq d_X (x_1,x_2)$.
	Hence
	\begin{align*}
		\sinh^2\big( \kappa_2 \frac{d_X(x_1,x_2)}{2}\big) &\geq \sinh^2(\kappa_2 d_{X_{\kappa_2}}(p_2,q_3)/2)\\
		& = \cosh(\kappa_2 r_1)\cosh(\kappa_2 r_2) \sinh^2\big( \kappa_2 \frac{t_2-t_1}{2}\big) \\
		&\hspace{3cm}+\sinh^2\big(\kappa_2\frac{\vert r_2-r_1\vert}{2}\big),
	\end{align*}	
as claimed.
\end{proof}

\section{A quantitative closing lemma}\label{sec:closing}

Let $M= X / \Gamma$ be a closed $d$-dimensional manifold, $d\geq 2$, with sectional curvature
$- \kappa_2^2 \le K \le - \kappa_1^2$ for some $0 < \kappa_1 \le  \kappa_2 < \infty$,
where $X$ is its universal cover and $\Gamma$ is the group of deck transformations.
The map $\pr\colon X \rightarrow M$ denotes the canonical projection,
while we denote with $\pi: SM \rightarrow M$ the foot-point projection defined by  $\pi(v) = p$ for all $v\in S_pM$.
Furthermore, we will denote by $\phi^t$ the geodesic flow on $M$, i.e. the map $\phi^t\colon SM \rightarrow SM$, $\phi^t(v)= \dot c_v(t)$,
where $c_v$ is the unique geodesic with $c_v(0)=\pi(v)$ and $\dot c_v(0)=v$.
By abuse of notation, we will also denote by $\pi$ and $\phi^t$ the foot-point projection from $SX$ onto $X$ and the geodesic flow on $X$, respectively.

The class of closed geodesics on $M$ is in one-to-one correspondence with the conjugacy classes in the group $\Gamma$ of deck transformations.
We here recall some elements of this correspondence, which we will use in the next sections.

Given a closed geodesic $\bar c \colon \R/\ell \Z \to M$, the length $\ell>0$ is such that $\bar c(t+\ell) = \bar c(t)$ for all $t\in\R$, and thus for every lift $c$ of $\bar c$ there is a unique $\gamma\in\Gamma$ such that
\begin{equation}\label{eqn:axis}
c(t+\ell) = \gamma c(t) \text{ for all }t\in\R.
\end{equation}

\begin{definition}
If $c$, $\gamma$, and $\ell>0$ are such that \eqref{eqn:axis} holds, then we say that $\gamma$ is the \emph{axial isometry} of $(c,\ell)$, and $c$ is an \emph{axis} of $\gamma$.
\end{definition}
If $c$ is an axis of some $\gamma$, then it follows immediately that $\bar c = {\pr}\circ c$ is a closed geodesic on $M$. Moreover, every axial isometry $\gamma$ of $c$ fixes $c(\pm\infty)$.

\begin{lemma}\label{lem:Preissmann}
Given any geodesic $c$ on $X$ such that $\bar c = {\pr}\circ c$ is closed, the set of $\gamma\in \Gamma$ fixing $c(-\infty)$ and $c(\infty)$ is an infinite cyclic subgroup.
\end{lemma}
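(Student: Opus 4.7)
The plan is to exhibit the subgroup as the image (under a translation-length map) of a discrete, non-trivial subgroup of $\R$. Write
\[
H := \{\gamma \in \Gamma : \gamma(c(-\infty)) = c(-\infty) \text{ and } \gamma(c(+\infty)) = c(+\infty)\}.
\]
Since $H$ is the intersection of two stabilizers, it is manifestly a subgroup. Non-emptiness is immediate from the hypothesis that $\bar c = \pr\circ c$ is closed: this yields some $\ell > 0$ and $\gamma_0 \in \Gamma$ with $c(t+\ell) = \gamma_0 c(t)$ for all $t$, and such a $\gamma_0$ automatically fixes $c(\pm\infty)$.

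Next I would show that every $\gamma \in H$ acts on $c$ as a translation. For $\gamma \in H$, the curve $t \mapsto \gamma\, c(t)$ is a geodesic whose endpoints at infinity are $\gamma(c(\pm\infty)) = c(\pm\infty)$. By the uniqueness of geodesics joining two distinct points at infinity in a Hadamard manifold (Proposition 1.7 of \cite{Eb1}, invoked earlier in the paper), this geodesic coincides with $c$ up to reparametrization; as $\gamma$ fixes the two endpoints individually (not swapping them), it preserves the orientation, so there is a unique $\tau(\gamma) \in \R$ with $\gamma\, c(t) = c(t + \tau(\gamma))$ for all $t$. The map $\tau : H \to \R$ is evidently a homomorphism. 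It is also injective: if $\tau(\gamma) = 0$ then $\gamma$ fixes every point of $c$, but $\Gamma$ acts freely on $X$ since $M = X/\Gamma$ is a manifold, forcing $\gamma = \mathrm{id}$.

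It remains to show $\tau(H)$ is a discrete subgroup of $\R$, after which the conclusion is automatic: a discrete subgroup of $\R$ is either trivial or infinite cyclic, and $\tau(\gamma_0) = \ell > 0$ lies in $\tau(H)$. For discreteness, note that $|\tau(\gamma)| = d(c(0), \gamma c(0))$, so any accumulation point of $\tau(H)$ would produce infinitely many distinct $\gamma \in \Gamma$ with $d(c(0), \gamma c(0))$ lying in a bounded interval, contradicting the proper discontinuity of the $\Gamma$-action on $X$ (which holds because $X/\Gamma$ is compact, hence in particular $\Gamma$ acts cocompactly and properly). Thus $\tau(H) \cong \Z$, and hence $H$ itself is infinite cyclic. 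The only subtle point is the translation-along-$c$ step, where one must carefully use that the two endpoints are fixed individually (not swapped) to rule out the orientation-reversing case; this is where negative curvature, via uniqueness of the connecting geodesic, does the real work.
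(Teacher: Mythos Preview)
Your argument is correct. The paper does not actually prove this lemma; it is stated without proof as a standard fact (the label \texttt{lem:Preissmann} signals that it is being treated as a variant of Preissmann's theorem). So there is no ``paper's own proof'' to compare against.

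For the record, each step of your proof is sound in the setting of the paper: the uniqueness of the geodesic joining two distinct boundary points is explicitly invoked just before the definition of the endpoint map (citing \cite[Proposition~1.7]{Eb1}), which justifies your translation-along-$c$ step; freeness of the $\Gamma$-action and proper discontinuity follow from $M = X/\Gamma$ being a compact manifold, which the paper assumes throughout Section~\ref{sec:closing}. Your remark that one needs the endpoints to be fixed \emph{individually} (not swapped) to get an orientation-preserving translation is exactly the right caveat, and is built into the hypothesis of the lemma.
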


The preceding discussion lets us go from closed geodesics to deck transformations. We will also need to go in the other direction.

\begin{definition}
Given $\gamma\in\Gamma\setminus\id$, the \emph{length} of $\gamma$ is
\[
|\gamma|:=\inf\{d(q,\gamma q): q\in X\}.
\]
\end{definition}

\begin{lemma}\label{lem:ax}
For every $\gamma\in\Gamma\setminus\id$, there exists $q_0\in X$ such that $d(q_0,\gamma q_0) = |\gamma| \geq 2\inj(M)$. Moreover, if $c\colon \R\to X$ is a geodesic joining $q_0$ and $\gamma q_0$, then $c(t+|\gamma|) = \gamma c(t)$ for all $t\in\R$, so
$\bar c := {\pr}\circ c$ is a closed geodesic.
\end{lemma}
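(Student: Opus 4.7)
The plan is to establish three things: (i) that the infimum $|\gamma|$ is attained by some $q_0\in X$ and is strictly positive, (ii) that $|\gamma|\geq 2\inj(M)$, and (iii) that the connecting geodesic $c$ satisfies the twisted periodicity $c(t+|\gamma|)=\gamma c(t)$, which immediately gives periodicity of $\bar c={\pr}\circ c$ downstairs.

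The step I expect to be the main obstacle is (i), because one must simultaneously control the base point (via cocompactness of the $\Gamma$-action on $X$) and the deck transformation (via discreteness), without losing track of the displacement value. I would take a minimizing sequence $q_n\in X$ with $d(q_n,\gamma q_n)\to|\gamma|$ and use cocompactness to choose $\alpha_n\in\Gamma$ with $p_n:=\alpha_n q_n$ lying in a fixed compact fundamental domain $K\subset X$. Setting $\gamma_n:=\alpha_n\gamma\alpha_n^{-1}$, a conjugate of $\gamma$ with the same translation length, we have $d(p_n,\gamma_n p_n)=d(q_n,\gamma q_n)\to|\gamma|$. Proper discontinuity of $\Gamma$ on $X$ ensures that only finitely many elements of $\Gamma$ can displace a point of the compact set $K$ by at most $|\gamma|+1$, so the sequence $\{\gamma_n\}$ takes only finitely many values. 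After passing to a subsequence, $\gamma_n\equiv\gamma'$ is constant and $p_n\to p_*\in K$, yielding $d(p_*,\gamma' p_*)=|\gamma'|=|\gamma|$ by continuity and conjugation-invariance of the translation length. Unwinding the conjugation produces a $q_0$ with $d(q_0,\gamma q_0)=|\gamma|$, and positivity is immediate: otherwise $\gamma'$ would fix $p_*$, contradicting freeness of the $\Gamma$-action.

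For (ii) I would invoke the standard description of the injectivity radius of a Riemannian covering $X\to M$ by a free, properly discontinuous isometric action: in the absence of conjugate points, which the curvature bound $K\leq -\kappa_1^2<0$ guarantees, one has
\[
\inj(M)=\tfrac12\inf_{\alpha\in\Gamma\setminus\{\id\}}\,\inf_{q\in X}d(q,\alpha q)=\tfrac12\inf_{\alpha\in\Gamma\setminus\{\id\}}|\alpha|,
\]
whence $|\gamma|\geq 2\inj(M)$. For (iii), let $c$ be the unit-speed geodesic with $c(0)=q_0$ and $c(|\gamma|)=\gamma q_0$; this exists and is unique because $X$ is Hadamard. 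Since $q_0\neq\gamma q_0$, the displacement function $f(q)=d(q,\gamma q)$ is smooth near $q_0$, and the first variation of arc length gives
\[
df_{q_0}(v)=\langle \gamma_* v,\dot c(|\gamma|)\rangle-\langle v,\dot c(0)\rangle
\]
for all $v\in T_{q_0}X$, using that $\gamma$ is an isometry. Criticality of $q_0$ forces $\dot c(|\gamma|)=\gamma_*\dot c(0)$, so the two geodesics $t\mapsto c(t+|\gamma|)$ and $t\mapsto\gamma c(t)$ share the same initial data at $t=0$ and hence coincide. Projecting via $\pr$ finally gives $\bar c(t+|\gamma|)=\bar c(t)$, so $\bar c$ is a closed geodesic of length $|\gamma|$.
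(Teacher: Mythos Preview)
The paper states this lemma without proof, treating it as a standard fact about deck transformations on Hadamard manifolds with compact quotient (see e.g.\ Eberlein's lecture notes \cite{Eb1} or the classical Preissmann theory). There is thus no ``paper's own proof'' to compare against.

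Your argument is correct and self-contained. A couple of remarks: for part (i) one could alternatively invoke the convexity of the displacement function $q\mapsto d(q,\gamma q)$ on a Hadamard manifold together with cocompactness to conclude directly that the infimum is attained and that every non-identity $\gamma$ is axial (hyperbolic), avoiding the conjugation trick; your compactness argument is, however, perfectly valid and arguably more robust. For part (iii) your first-variation computation is clean; the identity $\dot c(|\gamma|)=\gamma_*\dot c(0)$ is exactly what makes $c$ an axis in the sense used throughout the paper.
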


Using Busemann functions we can define the so called Hopf map.
\begin{definition}\label{def:hopf}
The \emph{Hopf map} $H \colon SX \to \sqbd \times \R$ for $p_0\in X$ is
\begin{equation}\label{eqn:hopf}
H(v) := (v^-, v^+, s(v)),
\text{ where }
s(v) :=b_{v^-}(\pi v, p_0).
\end{equation}
\end{definition}
%

%

Given disjoint sets $\Pa,\Fu\subset \partial X$, the set $H^{-1}(\Pa\times\Fu\times\{0\})$ represents the set of all $v\in SX$ whose past history under $\phi^t$ is given by $\Pa$, whose future evolution is given by $\Fu$, and
such that $b_{v^-}(\pi v, p_0) = 0$.
We will use the following choice of $\Pa,\Fu$: given our fixed choice of $v_0 \in S_{p_0} X $, we consider for each $\theta>0$ the sets
\begin{equation}\label{eqn:Cpm}
\begin{aligned}
\Pa = \Pa_\theta(v_0) &:= \{ w^- \mid w\in S_{p_0} X \text{ and } \sphericalangle_{p_0}(w,v_0) \leq \theta \}, \\
\Fu = \Fu_\theta(v_0) &:= \{ w^+  \mid w\in S_{p_0}X \text{ and } \sphericalangle_{p_0}(w, v_0) \leq \theta \}.
\end{aligned}
\end{equation}

\begin{lemma}\label{lem:dis}
Let $c: \R \to X$ be a geodesic with $c(\infty) \in \Fu_\theta(v_0) $ and $c(-\infty) \in \Pa_\theta(v_0) $ and $p_0 = \pi v_0 $.Then
\[
d(p_0,c) \le a(\theta):=  \frac{1}{\kappa_1} \arcosh \left(\frac{1}{\cos (\theta)} \right).
\]
\end{lemma}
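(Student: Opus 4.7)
The plan is to first identify the angle at $p_0$ between the two geodesic rays from $p_0$ out to $c(\pm\infty)$, then drop a perpendicular from $p_0$ to $c$ and apply the right-triangle estimate \eqref{eq:cosh-bound} in a limit.

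For the first step, write $c(\infty)=w_1^+$ and $c(-\infty)=w_2^-$ with $w_1,w_2\in S_{p_0}X$ and $\sphericalangle_{p_0}(w_i,v_0)\le \theta$. The unit vector at $p_0$ tangent to the ray going out to $c(\infty)$ is $w_1$, and the unit vector tangent to the ray going out to $c(-\infty)$ is $-w_2$ (since $w_2^- = c_{-w_2}(+\infty)$). The spherical triangle inequality on $S_{p_0}X$ gives
\[
\sphericalangle_{p_0}(c(-\infty),c(\infty))=\sphericalangle(w_1,-w_2)\ge \sphericalangle(v_0,-v_0)-\sphericalangle(v_0,w_1)-\sphericalangle(-v_0,-w_2)\ge \pi-2\theta.
\]

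For the second step, let $q$ be the foot-point projection of $p_0$ onto $c$; it exists and is unique because $t\mapsto d(p_0,c(t))^2$ is strictly convex on $X$. Set $\varphi_1:=\sphericalangle_{p_0}(q,c(\infty))$ and $\varphi_2:=\sphericalangle_{p_0}(q,c(-\infty))$. By the triangle inequality on the unit sphere at $p_0$,
\[
\varphi_1+\varphi_2\ge \sphericalangle_{p_0}(c(-\infty),c(\infty))\ge \pi-2\theta,
\]
so at least one of them, say $\varphi_1$, satisfies $\varphi_1\ge \frac{\pi}{2}-\theta$.

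For the third step, fix $t>0$ large and consider the geodesic triangle $\Delta(c(t),p_0,q)\subset X$, which has a right angle at $q$ (since $q$ is the foot-point projection). Applying \eqref{eq:cosh-bound} with the role of $\alpha_2$ played by $\varphi_1(t):=\sphericalangle_{p_0}(q,c(t))$ yields
\[
\cosh(\kappa_1 d(p_0,q))\le \frac{1}{\sin\varphi_1(t)}.
\]
Letting $t\to\infty$ and using continuity of visibility angles (Lemma \ref{lem:angle}, together with the cone-topology characterization of $\partial X$) we have $\varphi_1(t)\to \varphi_1\ge \frac{\pi}{2}-\theta$, whence
\[
\cosh(\kappa_1 d(p_0,c))=\cosh(\kappa_1 d(p_0,q))\le \frac{1}{\sin(\pi/2-\theta)}=\frac{1}{\cos\theta},
\]
and the stated bound $d(p_0,c)\le \frac{1}{\kappa_1}\arcosh(1/\cos\theta)=a(\theta)$ follows.

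The only potentially delicate point is the passage to the limit in $\varphi_1(t)\to \varphi_1$, but this is immediate from the continuity of the endpoint map and the $C^1$-regularity of visibility angles established in Lemma \ref{lem:angle}; all other ingredients are standard pinched-curvature trigonometry already set up earlier in the section.
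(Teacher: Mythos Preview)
Your proof is correct and follows essentially the same approach as the paper. The paper establishes the same angle bound $\sphericalangle_{p_0}(c(-\infty),c(\infty))\ge \pi-2\theta$ and then simply invokes Corollary~\ref{cor:tri} with $\eps=2\theta$, whose proof is precisely the perpendicular-drop argument using \eqref{eq:cosh-bound} that you carry out; your version is in fact more careful, since Corollary~\ref{cor:tri} is stated only for finite triangles and you make the necessary limit $t\to\infty$ explicit.
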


\begin{proof}
By assumption there are $v, w \in S_{p_0}X$ such that $\sphericalangle_{p_0}(v,v_0) \leq \theta $, $\sphericalangle_{p_0}(w,-v_0) \leq \theta $,
$v^+ = c( \infty)$, and $w^+ = c( -\infty)$. Then $ \sphericalangle_{p_0}(v,w) \ge \pi - 2 \theta$ and Corollary \ref{cor:tri} yields the claim.
\end{proof}

Now consider
\begin{equation}\label{eqn:G*}
 \Gamma^*_\theta(v_0) := \{\gamma \in \Gamma \mid
\gamma \Fu_\theta (v_0)\subset \Fu_\theta(v_0) \text{ and } \gamma^{-1} \Pa_\theta(v_0) \subset \Pa_\theta(v_0) \}.
\end{equation}

\begin{lemma}\label{lem:closing-0}
Given any $\gamma\in  \Gamma^*_\theta(v_0)$, there exists an axis $c$ for $\gamma$ such that $c(-\infty) \in \Pa_\theta(v_0) $ and $c(+\infty) \in \Fu_\theta(v_0) $.
\end{lemma}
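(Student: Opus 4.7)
The plan is to produce two distinct $\gamma$-fixed points on $\partial X$, one in $\Fu_\theta(v_0)$ and one in $\Pa_\theta(v_0)$, and then take the unique geodesic connecting them. In this argument I will tacitly assume $\theta<\pi/2$, since otherwise the conclusion follows directly from any choice of axis of $\gamma$ (which exists by Lemma~\ref{lem:ax}).

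First I would use the bijection $f_{p_0}\colon S_{p_0}X\to\partial X$ from Lemma~\ref{lem:p-to-xi} to identify $\Fu_\theta(v_0)$ with the closed spherical cap $\{w\in S_{p_0}X:\sphericalangle_{p_0}(w,v_0)\le\theta\}$, which is homeomorphic to a closed $(d-1)$-ball in the sphere topology. Since $\gamma\in\Gamma$ is an isometry of $X$, it permutes asymptotic classes and extends to a homeomorphism of $\bar X$ (in the cone topology of Eberlein--O'Neill \cite{EO73}); in particular it acts continuously on $\partial X$. The hypothesis $\gamma\Fu_\theta(v_0)\subset \Fu_\theta(v_0)$ therefore makes $\gamma$ a continuous self-map of a closed ball, and Brouwer's fixed point theorem yields $\xi\in\Fu_\theta(v_0)$ with $\gamma\xi=\xi$. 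The very same argument applied to $\gamma^{-1}$ acting on $\Pa_\theta(v_0)$ produces $\eta\in\Pa_\theta(v_0)$ with $\gamma^{-1}\eta=\eta$, hence $\gamma\eta=\eta$.

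Next I would check that $\xi\ne\eta$. Because $\theta<\pi/2$, the closed spherical caps of radius $\theta$ around $v_0$ and $-v_0$ in $S_{p_0}X$ are disjoint; since $f_{p_0}$ is a bijection, $\Fu_\theta(v_0)\cap \Pa_\theta(v_0)=\emptyset$ and in particular $\xi\ne\eta$. By \cite[Proposition~1.7]{Eb1} there exists a unique geodesic $c\colon\R\to X$ with $c(-\infty)=\eta$ and $c(+\infty)=\xi$. The curve $t\mapsto\gamma c(t)$ is a geodesic with the same two endpoints at infinity, so by uniqueness $\gamma c(t)=c(t+\tau)$ for some $\tau\in\R$. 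Since $\Gamma$ acts freely and $\gamma\neq\id$ has no fixed point in $X$, we must have $\tau\ne 0$; after reversing $c$ if necessary we get $\tau>0$, so that $c$ is an axis of $\gamma$ in the sense of \eqref{eqn:axis}, with $c(-\infty)=\eta\in\Pa_\theta(v_0)$ and $c(+\infty)=\xi\in\Fu_\theta(v_0)$.

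The only real obstacle is the topological input: continuity of the action of $\gamma$ on $\bar X$ and the identification of $\Fu_\theta(v_0),\Pa_\theta(v_0)$ as closed balls via $f_{p_0}$. Both are standard facts about the cone/sphere topology recalled in Section~\ref{subsec:boundary}, after which Brouwer's theorem combined with uniqueness of geodesics joining two distinct boundary points closes the argument.
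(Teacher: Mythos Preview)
Your argument follows exactly the route of the paper's proof: apply Brouwer's fixed point theorem to $\gamma$ on $\Fu_\theta(v_0)$ and to $\gamma^{-1}$ on $\Pa_\theta(v_0)$, then join the two fixed points by the unique connecting geodesic. Your write-up is considerably more careful than the paper's two-line proof, and the topological inputs you invoke (continuity of the $\Gamma$-action on $\bar X$, identification of $\Fu_\theta(v_0)$ and $\Pa_\theta(v_0)$ with closed balls via $f_{p_0}$) are all correct.

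There is one slip at the very end. The phrase ``after reversing $c$ if necessary we get $\tau>0$'' does not do what you want: reversing $c$ swaps $c(+\infty)$ and $c(-\infty)$, so after a reversal you would have $c(+\infty)=\eta\in\Pa_\theta(v_0)$, losing the conclusion. What you must argue is that $\tau>0$ already holds, i.e.\ that $\xi$ is the \emph{attracting} fixed point of $\gamma$. This follows directly from the hypothesis $\gamma\Fu_\theta(v_0)\subset\Fu_\theta(v_0)$: if $\tau<0$, then $\eta$ would be attracting, and iterating $\gamma$ on any point of $\Fu_\theta(v_0)\setminus\{\xi\}$ would give a sequence remaining in the closed set $\Fu_\theta(v_0)$ but converging to $\eta\notin\Fu_\theta(v_0)$, a contradiction. (The paper's own proof is terse enough that it leaves this orientation point implicit.)
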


\begin{proof}
By the Brouwer fixed point theorem, $\gamma$ has one fixed point in $\Pa_\theta(v_0)$ and one in $\Fu_\theta(v_0)$.
By Lemma \ref{lem:ax}, it has up to parametrization a unique axis $c$ whose endpoints are the fixed points of $\gamma$.
\end{proof}

The following definition is justified by Lemma \ref{lem:dis}.

\begin{multline}\label{theta}
A_\theta(v_0) := \{v\in SX \mid (v^-, v^+) \in \Pa_\theta(v_0)\times\Fu_\theta(v_0) \\
\text{ and } d(\pi v, \pi v_0) \le a(\theta) \}.
\end{multline}
Further we define
\begin{equation}\label{eqn:Gt}
\Gamma_\theta(v_0, t) :=
\{\gamma \in \Gamma \mid A_\theta(v_0)  \cap  \phi^{-t} \gamma_{\ast} A_\theta(v_0) \neq \emptyset \}.
\end{equation}

The next proposition will be crucial for the proof of the quantitative closing Lemma.
Its proof is inspired by the one given in \cite[Lemma 4.9]{CKW}.

\begin{proposition}\label{prop:c}
	Consider the constant  $ \theta_0 =\frac{\pi}{8} \frac{\kappa_1}{\kappa_2+ \kappa_1}  $ and the function $\rho(\theta):= 2(\frac{\kappa_2}{\kappa_1}+1)\theta$.
	Then there exists $t_0 =t_0(\kappa_1, \kappa_2)>0$ such that for all $\theta  \in [0, \theta_0]$, $t \ge t_0$ and all  $\gamma \in \Gamma_\theta(v_0, t)$ we have
	
	\[
	\gamma \Fu_{\rho(\theta)  }(v_0) \subset \Fu_{\rho(\theta)  }(v_0)  \;\; \text{and}   \;\; \gamma^{-1} \Pa_{\rho(\theta)  }(v_0)  \subset \Pa_{\rho(\theta)  }(v_0).
	\]
	In particular, $\Gamma_\theta(v_0, t)\subset \Gamma^*_{\rho(\theta)}(v_0)$.
\end{proposition}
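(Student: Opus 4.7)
The plan is to interpret $\gamma\in\Gamma_\theta(v_0,t)$ as saying that $\gamma$ approximates a translation by distance $\approx t$ along a geodesic close to $c_{v_0}$, and then to use negative-curvature triangle comparison to control how visibility angles shift when the base point moves along this geodesic.

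First I unpack the hypothesis. Picking $v\in A_\theta(v_0)$ with $w:=\gamma^{-1}\phi^t v\in A_\theta(v_0)$, the definitions of $A_\theta$, $\Fu_\theta$ and $\Pa_\theta$ yield $d(p_0,\pi v)\le a(\theta)$ and $d(\gamma p_0, c_v(t)) = d(p_0,\pi w)\le a(\theta)$, together with $\sphericalangle_{p_0}(v^{\pm}, v_0^{\pm})\le\theta$ and $\sphericalangle_{p_0}(\gamma^{-1}v^{\pm}, v_0^{\pm})\le\theta$. Thus $\gamma p_0$ sits close to a point far along $c_v$, and $c_v$ itself has endpoints at infinity that are $\theta$-close (as seen from $p_0$) to $v_0^{\pm}$.

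It suffices to prove $\gamma\Fu_{\rho(\theta)}(v_0)\subset\Fu_{\rho(\theta)}(v_0)$; the inclusion for $\gamma^{-1}$ on $\Pa_{\rho(\theta)}(v_0)$ is identical after replacing $v$ with $-\phi^t v$ and $\gamma$ with $\gamma^{-1}$. Fix $\eta\in\Fu_{\rho(\theta)}(v_0)$; the goal is $\sphericalangle_{p_0}(\gamma\eta, v_0^+)\le\rho(\theta)$. Using the isometry property of $\gamma$ and the triangle inequality for angles at $\gamma p_0$, I first obtain
\[
\sphericalangle_{\gamma p_0}(\gamma\eta, v^+) \le \sphericalangle_{p_0}(\eta, v_0^+) + \sphericalangle_{p_0}(v_0^+,\gamma^{-1}v^+) \le \rho(\theta)+\theta.
\]
The main work is to transfer this bound from $\gamma p_0$ back to $p_0$. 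To do this I form the geodesic triangle with vertices $p_0$, $\gamma p_0$ and a point $q_R$ at distance $R$ along the ray from $\gamma p_0$ to $\gamma\eta$, and send $R\to\infty$. Since $\gamma p_0$ lies within $a(\theta)$ of $c_v(t)$ and the ray from $p_0$ to $v^+$ stays close to $c_v$, the long side $\overline{p_0\,\gamma p_0}$ has length comparable to $t$ and makes an angle $\le\theta$ with $v_0^+$ at $p_0$. Dropping a perpendicular from $q_R$ onto the extension of this long side and applying the right-triangle estimates of Lemma \ref{lem:tri1} and Lemma \ref{lem:tri2} (the upper law of sines \eqref{eq:lawsin-kappa1} involving $\kappa_1$ and the lower one \eqref{eq:lawsin-kappa_2} involving $\kappa_2$), the angle at $p_0$ is controlled by $(\kappa_2/\kappa_1)(\rho(\theta)+\theta)$ plus an additive error that decays like $1/\sinh(\kappa_1 t)$. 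Choosing $t_0=t_0(\kappa_1,\kappa_2)$ large enough to absorb this decay into one extra summand $\theta$, and combining with $\sphericalangle_{p_0}(v^+,v_0^+)\le\theta$, I conclude $\sphericalangle_{p_0}(\gamma\eta,v_0^+)\le 2(\kappa_2/\kappa_1+1)\theta=\rho(\theta)$, as desired.

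The principal obstacle is precisely this transfer of visibility-angle bounds between the base points $p_0$ and $\gamma p_0$. The crude Lipschitz bound of Lemma \ref{lem:angle} only gives $\kappa_2\cdot d(p_0,\gamma p_0)\approx\kappa_2 t$, which is useless for large $t$. One must instead exploit that $\gamma p_0$ lies next to a long geodesic segment emanating from $p_0$ and terminating at $v^+$, reducing the question to a nearly Fermi-adapted configuration along $c_v$; in this regime the pinching ratio $\kappa_2/\kappa_1$ measures how small angles are amplified when the base point is moved a long distance along the geodesic, which is exactly the source of the prefactor $2(\kappa_2/\kappa_1+1)$ in $\rho(\theta)$. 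The role of $t_0$ is merely to guarantee that all remaining additive error terms are absorbed.
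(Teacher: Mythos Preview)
Your setup and the first angle inequality are fine: from $v\in A_\theta(v_0)$ and $\gamma^{-1}_*\phi^t v\in A_\theta(v_0)$ you correctly obtain $\sphericalangle_{\gamma p_0}(\gamma\eta,v^+)=\sphericalangle_{p_0}(\eta,\gamma^{-1}v^+)\le\rho(\theta)+\theta$. The gap is in the ``transfer'' step. As written, you claim that moving the base point from $\gamma p_0$ back to $p_0$ multiplies the visibility angle by at most $\kappa_2/\kappa_1$ (up to a decaying error). But then your final bound is
\[
\sphericalangle_{p_0}(\gamma\eta,v_0^+)\;\le\;\frac{\kappa_2}{\kappa_1}\bigl(\rho(\theta)+\theta\bigr)+\theta+\theta,
\]
and with $\rho(\theta)=2(\kappa_2/\kappa_1+1)\theta$ this is strictly larger than $\rho(\theta)$ for every $\kappa_2\ge\kappa_1$; the inclusion $\gamma\Fu_{\rho(\theta)}\subset\Fu_{\rho(\theta)}$ therefore cannot close. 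Even if the transfer were a pure contraction (which is what happens in constant curvature), the two additive $\theta$'s you pick up---one from $\gamma^{-1}v^+$ versus $v_0^+$ at $p_0$, one from $v^+$ versus $v_0^+$ at $p_0$---already push you to $\rho(\theta)+2\theta>\rho(\theta)$. The perpendicular-foot argument you sketch also has a limiting problem: as $R\to\infty$ the height of $q_R$ over the line $\overline{p_0\,\gamma p_0}$ diverges, so the ratio $\sinh(\kappa_1 h)/\sinh(\kappa_2 h)$ is not controlled in the way you need.

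The paper avoids this trap by never transporting an angle of size $\rho(\theta)$ over the long distance $t$. It uses the Lipschitz bound of Lemma~\ref{lem:angle} only over the \emph{short} distance $d(p_0,y_2)\le a(\theta)$, producing the single term $\kappa_2 a(\theta)\le 2(\kappa_2/\kappa_1)\theta$; this is where the factor $\kappa_2/\kappa_1$ in $\rho$ really comes from. The remaining angle $\sphericalangle_{\gamma^{-1}y_2}(x_2,\eta)$ at the far point is then shown to be \emph{exponentially small in $t$}, not merely $O(\rho(\theta))$: one first checks that $\sphericalangle_{x_2}(\eta,\gamma^{-1}y_2)\ge\pi-4(\kappa_2/\kappa_1+1)\theta$ (this is the only place the hypothesis $\eta\in\Fu_{\rho(\theta)}$ enters), then applies Corollary~\ref{cor:tri} to bound the distance from $x_2$ to the geodesic $\overline{\gamma^{-1}y_2\,\eta}$, and finally uses the law of sines over the long side of length $\approx t$. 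The upshot is that the contribution coming from $\eta$ is absorbed into a term that can be made $\le\theta$ by choosing $t_0$ large, so the final count is $\theta+2(\kappa_2/\kappa_1)\theta+\theta=\rho(\theta)$ exactly. The structural point you are missing is that the self-map condition forces the $\rho(\theta)$-dependence of $\eta$ to enter only through an exponentially damped channel, not through a channel with a fixed multiplicative loss.
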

\begin{remark}
It is possible to determine $t_0$  explicitly. In particular, $t_0>1$.
\end{remark}

\begin{figure}[htb]
	\def\svgwidth{\linewidth}
	\scalebox{.7}{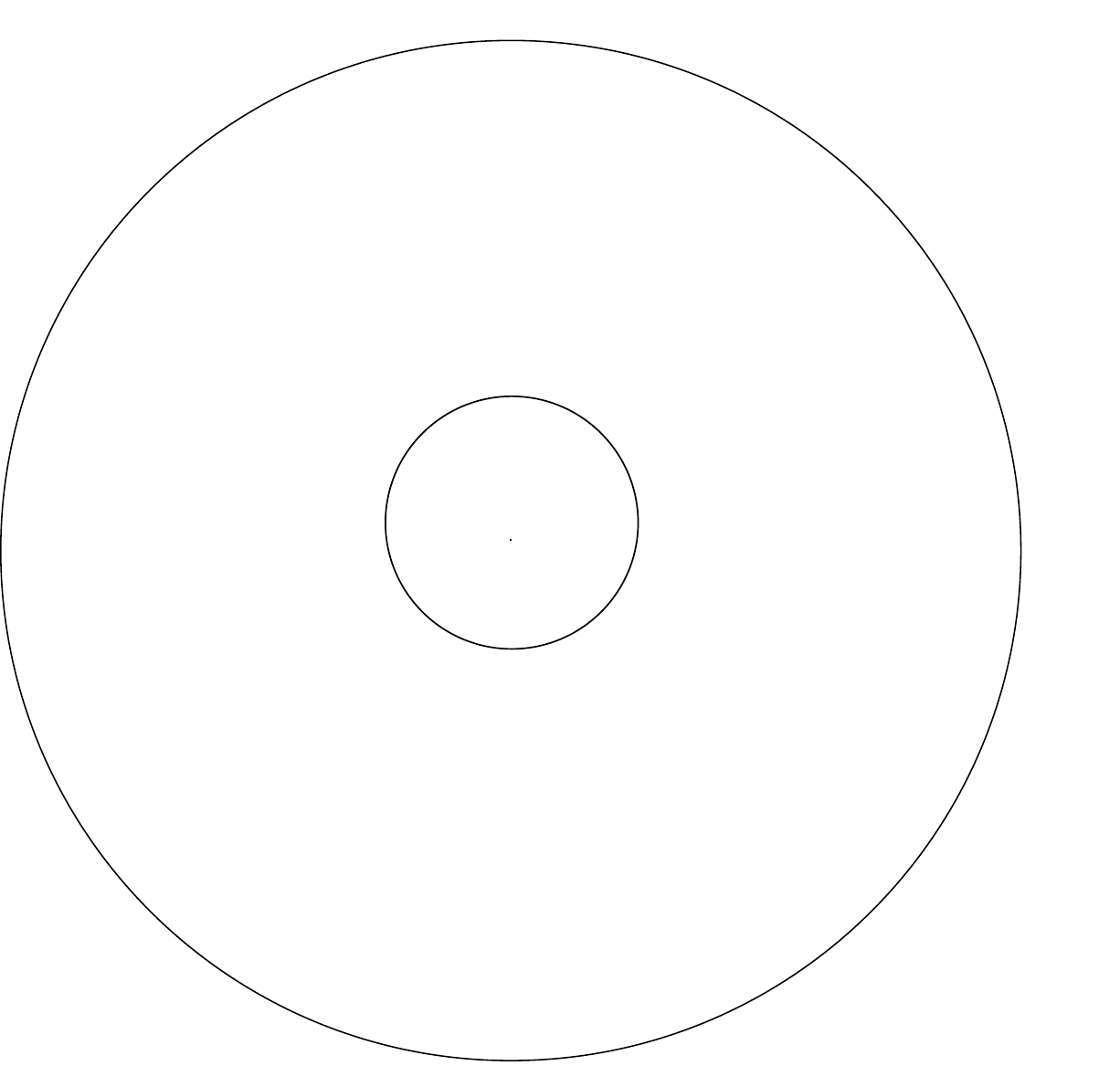}
	\caption{Proving the inclusion $\gamma \Fu_{\rho(\theta,t)} \subset \Fu_{\rho(\theta,t)}$ in proposition \ref{prop:c}.}
\end{figure}

\begin{proof}
	
	We prove the first inclusion; the second is similar.
	Consider $\gamma \in \Gamma_\theta(v_0, t) $ for some $t>0$. Then
	the set  $A_\theta (v_0) \cap  \phi^{-t} \gamma_{\ast} A_\theta(v_0)$ is non-empty and therefore the sets
	$$
	\gamma_\ast^{-1}A_\theta (v_0) \cap \phi^{-t}  A_\theta(v_0) \; \;\text{and}  \; \; \phi^{t}  A_\theta(v_0) \cap  \gamma_{\ast} A_\theta (v_0)
	$$
	are non-empty as well. Therefore,
	there exist $v_1, w_1 \in A_\theta(v_0) $ with $\gamma_\ast^{-1}v_1 \in \phi^{-t}  A_\theta(v_0)$ and
	$\gamma_\ast w_1 \in  \phi^{t}  A_\theta(v_0)$ and consequently there exist $v_2, w_2 \in  A_\theta(v_0)$ with $\gamma_\ast^{-1}v_1 =  \phi^{-t}  v_2$
	and  $\gamma_\ast w_1 = \phi^{t} w_2$. From the definition of $A_\theta(v_0)$ we obtain that $c_{v_2}(-\infty) = v_2^{-} \in \Pa_\theta(v_0)$ and $c_{w_2}(\infty) = w_2^{+} \in \Fu_\theta(v_0)$.
	
	We show that for all $0\leq \theta \leq \theta_0 := \frac{\pi}{8} \frac{\kappa_1}{\kappa_2+ 2\kappa_1}$, all $t\geq t_0>0$ for a $t_0$ depending only on $\kappa_1, \kappa_2$,
	and all $\gamma \in  \Gamma_\theta(v_0, t)$ we have
	$$
	\gamma \Fu_{\rho(\theta)}(v_0)\subset \Fu_{\rho(\theta)}(v_0),
	$$
	where $\rho(\theta)$ is defined as in the statement of the proposition.
	Consider $\eta \in \Fu_{\rho(\theta)}(v_0)$, i.e  $\sphericalangle_{p_0}( v_0^+ , \eta) \le \rho(\theta) $. The triangle inequality yields
	\begin{align*}
		\sphericalangle_{p_0}( v_0^+ , \gamma \eta)  &  \le   \sphericalangle_{p_0}( v_0^+ , w_2^+) + \sphericalangle_{p_0}( w_2^+ , \gamma \eta)\\
		&  \le \theta + \sphericalangle_{p_0}( w_2^+ , \gamma \eta).
	\end{align*}
	Denote by $x_i = \pi v_i$ and $y_i = \pi w_i$.  Since $a( \theta) \le \frac{2}{\kappa_1} \theta$ for $\theta \le \theta_0 \le \frac{\pi}{4}$,
	we obtain
	\begin{align*}
		\sphericalangle_{p_0}( w_2^+ , \gamma \eta)  &\le  \kappa_2 d(p_0, y_2) + \sphericalangle_{y_2}( w_2^+ , \gamma \eta) \\
		&\le  \kappa_2 d(p_0, y_2)  + \sphericalangle_{y_2}( w_2^+ , \gamma x_2) + \sphericalangle_{y_2}( \gamma x_2,  \gamma \eta) \\
		&\leq  \kappa_2 a(\theta) +\sphericalangle_{y_2}(\gamma y_1, \gamma x_2) + \sphericalangle_{\gamma ^{-1}y_2}( x_2,  \eta)\\
		&\leq 2 \frac{\kappa_2}{\kappa_1} \theta +\sphericalangle_{y_2}(\gamma y_1, \gamma x_2) + \sphericalangle_{\gamma ^{-1}y_2}( x_2,  \eta).
	\end{align*}
	Since $d( y_2, \gamma y_1) =t$ and $d( y_2, \gamma x_2) \ge d(   y_2, \gamma y_1)- d (\gamma y_1, \gamma x_2) \ge t -2a(\theta) $,
	the estimate \eqref{eq:angle-bound} in Lemma \ref{lem:tri2}
	yields a constant $b$ depending on $\kappa_1$ such that
	\begin{equation} \label{eqn1:prop}
		\begin{split}
		\sphericalangle_{y_2}(\gamma y_1, \gamma x_2)  &  \le b e^{- \kappa_1(t -  d ( y_1, x_2 ))}a(\theta) \\
		& \le b e^{- \kappa_1(t -  2 a(\theta))}a(\theta)  \le \frac{ 2b}{\kappa_1}  e^{- \kappa_1(t -  2 a(\theta))}\theta.
		\end{split}
	\end{equation}
	In particular this implies that we can choose $t_1 \ge 0$ only depending on $\kappa_1, \kappa_2$ 
	such that $\sphericalangle_{y_2}(\gamma y_1, \gamma x_2) \le \frac{\theta}{2} $ for all $t \ge t_1$.
	Now we want to estimate $\sphericalangle_{\gamma ^{-1}y_2}( x_2,  \eta) $ from above. Note that
	\begin{align*}
		\sphericalangle_{x_2}( \eta, v_2^+ )  &\le \kappa_2d(p_0, x_2) +\sphericalangle_{p_0}( v_2^+ ,\eta) \\
		&\le \kappa_2 d(p_0, x_2) +\sphericalangle_{p_0}( v_2^+ ,v_0^+) +\sphericalangle_{p_0}( v_0^+ ,\eta) \\
		&\le \kappa_2 a(\theta) + \theta + 2(\frac{\kappa_2}{\kappa_1}+1)\theta \\
		&\le 4\frac{\kappa_2}{\kappa_1}+3\theta.
	\end{align*}
	From the triangle inequality we obtain
	$$
	\pi = \sphericalangle_{x_2}( \gamma^{-1} x_1, v_2^+)  \le\sphericalangle_{x_2}(\gamma^{-1} x_1 , \gamma^{-1} y_2 ) +  \sphericalangle_{x_2}( \gamma^{-1} y_2 , \eta)
	+ \sphericalangle_{x_2} (\eta, v_2^+),
	$$
	and therefore
	\begin{align*}
		\sphericalangle_{x_2}( \eta, \gamma^{-1} y_2 )& \ge \pi - \sphericalangle_{x_2}( \eta, v_2^+ ) - \sphericalangle_{x_2}(\gamma^{-1} x_1 , \gamma^{-1} y_2 )\\
		& \ge \pi -  4\frac{\kappa_2}{\kappa_1} -3\theta    - b' e^{- \kappa_1(t -  2 a(\theta))} \frac{2}{\kappa_1} \theta,
	\end{align*}
	where the estimate $\sphericalangle_{x_2}(\gamma^{-1} x_1 , \gamma^{-1} y_2 )\leq b' e^{- \kappa_1(t -  2 a(\theta))} \frac{2}{\kappa_1} \theta$ follows as in \eqref{eqn1:prop}
	and $b'>0$ is a constant only depending on $\kappa_1$.
	In particular, one can choose $t_2 \ge t_1$ such that
	$$
	\sphericalangle_{x_2}( \eta, \gamma^{-1} y_2 ) \ge \pi -  4(\frac{\kappa_2}{\kappa_1}+1)\theta
	$$
	for all  $t \ge t_2$.  Let  $z$ be the orthogonal projection of $x_2$ onto the geodesic ray connecting
	$\gamma ^{-1}y_2 $ and $\eta$.	
	Using Corollary \ref{cor:tri} we obtain
	$$
	d(z, x_2) \le a( 2(\frac{\kappa_2}{\kappa_1}+1)\theta) \le \frac{4(\kappa_2 + \kappa_1)}{\kappa_1^2} \theta
	$$
	for all  $\theta \le \frac{\pi}{8} \frac{\kappa_1}{\kappa_2+ \kappa_1}  = \theta_0 $.
	Finally,  inequality  \eqref{eq:lawsin-kappa_2} in Lemma \ref{lem:tri1}
	yields the requested upper bound
	\[
	\sphericalangle_{\gamma ^{-1}y_2}( x_2,  \eta) = \sphericalangle_{\gamma ^{-1}y_2}( x_2,  z) \le
	\arcsin (\frac{\sinh(   \frac{4(\kappa_2 +\kappa_1)}{\kappa_1^2} \theta)}{\sinh(\kappa_1 d(\gamma ^{-1}y_2, z))})
	\]
	of $\sphericalangle_{\gamma ^{-1}y_2}( x_2,  \eta)$.
	Using
	$$
	d(\gamma ^{-1}y_2, z) \ge d(\gamma ^{-1}y_2, x_2) -d(x_2, z) \ge t -2a(\theta) - \frac{4(\kappa_2 + \kappa_1)}{\kappa_1^2} \theta
	$$
	and the estimate \eqref{eqn1:prop}
	we can choose $t_0 \ge t_2$ such that for all $t \ge t_0$ we have
	$$
	\sphericalangle_{y_2}(\gamma y_1, \gamma x_2) + \sphericalangle_{\gamma ^{-1}y_2}( x_2,  \eta)\le \theta,
	$$
	which yields
	$$
	\sphericalangle_{p_0}( w_2^+ , \gamma \eta) \le  2 \frac{\kappa_2}{\kappa_1}\theta  +\theta.
	$$
	Hence
	$$
	\sphericalangle_{p_0}( v_0^+ , \gamma \eta) \le   \sphericalangle_{p_0}( v_0^+ , w_2^+) +  \sphericalangle_{p_0}( w_2^+ , \gamma \eta) \le     2( \frac{\kappa_2}{\kappa_1}+1)\theta,
	$$
	which implies the assertion.

\end{proof}

From Proposition \ref{prop:c} follows that any $\gamma\in \Gamma_{\theta}(v_0,t)$ for $\theta$ small enough and $t$ large enough has an axis whose length is comparable with $t$
and which is not far away from a geodesic with $\gamma$- recurrent initial vector. More precisely:

\begin{proposition}\label{lemma:length-gamma}
	Let $\theta_0$, $t_0$, and $\rho(\theta)$ as in Prop. \ref{prop:c}. Then for all $\theta\leq \theta_0$ and all $t\geq t_0$
	any $\gamma\in \Gamma_{\theta}(v_0,t)$ has an axis $c$
	with $c(-\infty)\in \Pa_{\rho(\theta)}(v_0)$ and $c(+\infty)\in \Fu_{\rho(\theta)}(v_0)$ whose length satisfies
	\begin{equation}\label{eq:length-gamma}
		t -  \frac{4}{\kappa_1}\Big(\frac{2\kappa_2}{\kappa_1}+3\Big)\theta \le \vert\gamma\vert  \leq \frac{4}{\kappa_1}\theta +t.
	\end{equation}
	Moreover any geodesic $c_v\colon [0,t]\rightarrow X$ with $\dot c_v(0)=v\in SX$ such that $v\in  A_\theta(v_0) \cap \phi^{-t}\gamma_* A_\theta(v_0)$ satisfies
	\begin{equation}\label{eq:dist}
		d(c(s), c_v(s))\leq \frac{6}{\kappa_1}\Big( \frac{2\kappa_2}{\kappa_1}+3\Big)\theta \quad\forall\, s\in[0,t].
	\end{equation}
\end{proposition}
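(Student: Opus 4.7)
The plan is to first produce the axis $c$ via Proposition~\ref{prop:c} and Lemma~\ref{lem:closing-0}, then derive the two-sided bound on $|\gamma|$ by comparing it with $d(p_0,\gamma p_0)$, and finally deduce the uniform closeness of $c$ and $c_v$ using convexity of the distance function between constant-speed geodesics in a Hadamard manifold. Proposition~\ref{prop:c} gives $\gamma\in\Gamma^*_{\rho(\theta)}(v_0)$, and Lemma~\ref{lem:closing-0} then furnishes an axis $c$ of $\gamma$ with $c(-\infty)\in\Pa_{\rho(\theta)}(v_0)$ and $c(+\infty)\in\Fu_{\rho(\theta)}(v_0)$. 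Lemma~\ref{lem:dis} yields $d(p_0,c)\leq a(\rho(\theta))$, and since $\gamma c=c$ also $d(\gamma p_0,c)\leq a(\rho(\theta))$.

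For the length estimate I pick $v\in A_\theta(v_0)\cap\phi^{-t}\gamma_* A_\theta(v_0)$ (nonempty by hypothesis); the definition of $A_\theta(v_0)$ forces $d(c_v(0),p_0)\leq a(\theta)$ and $d(c_v(t),\gamma p_0)\leq a(\theta)$. Since $d(c_v(0),c_v(t))=t$, the triangle inequality yields
\[
|\,d(p_0,\gamma p_0)-t\,|\leq 2a(\theta).
\]
The upper bound follows from $|\gamma|\leq d(p_0,\gamma p_0)$ together with $a(\theta)\leq\tfrac{2}{\kappa_1}\theta$. For the lower bound, let $p$ be the foot of $p_0$ on $c$; because $\gamma$ is an isometry preserving $c$, the point $\gamma p$ is the foot of $\gamma p_0$ on $c$ and $d(p,\gamma p)=|\gamma|$. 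The triangle inequality then gives
\[
|\gamma|=d(p,\gamma p)\geq d(p_0,\gamma p_0)-2a(\rho(\theta))\geq t-2a(\theta)-2a(\rho(\theta)),
\]
which, after substituting $\rho(\theta)=2(\tfrac{\kappa_2}{\kappa_1}+1)\theta$ and using $a(x)\leq\tfrac{2}{\kappa_1}x$ (valid since $\theta,\rho(\theta)\leq\tfrac{\pi}{4}$ for $\theta\leq\theta_0$), matches the claimed lower bound.

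For the closeness estimate, parametrize $c$ so that $c(0)=p$ and $c(|\gamma|)=\gamma p$, and introduce the auxiliary constant-speed parametrization $\tilde c(s):=c(s|\gamma|/t)$ for $s\in[0,t]$. Since $X$ is CAT(0), the map $s\mapsto d(c_v(s),\tilde c(s))$ is convex on $[0,t]$ and thus bounded by the maximum of its endpoint values, each at most $a(\theta)+a(\rho(\theta))$. Independently, $d(c(s),\tilde c(s))=s\,|\,1-|\gamma|/t\,|\leq |\,t-|\gamma|\,|$, which by the length estimate is at most $\tfrac{4}{\kappa_1}(\tfrac{2\kappa_2}{\kappa_1}+3)\theta$. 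Adding the two via the triangle inequality and simplifying yields the claimed $\tfrac{6}{\kappa_1}(\tfrac{2\kappa_2}{\kappa_1}+3)\theta$.

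The main obstacle is aligning the two parametrizations: the axis $c$ has its own translation length $|\gamma|\neq t$, so convexity cannot be applied directly to $c(s)$ and $c_v(s)$. Passing through the rescaled auxiliary $\tilde c$ and absorbing the resulting reparametrization cost via the already-established length estimate is what makes the argument close cleanly.
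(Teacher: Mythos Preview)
Your argument is correct and tracks the paper's proof closely for the existence of the axis and the two-sided length bound. The only genuine difference is in the closeness estimate \eqref{eq:dist}: you introduce the rescaled auxiliary curve $\tilde c(s)=c(s|\gamma|/t)$ and split the error into a convexity piece and a reparametrization piece. This works, but your stated motivation---that convexity cannot be applied directly to $c(s)$ and $c_v(s)$ because $|\gamma|\neq t$---is mistaken. Both $c|_{[0,t]}$ and $c_v|_{[0,t]}$ are unit-speed geodesics on the same interval, so $s\mapsto d(c(s),c_v(s))$ is already convex on $[0,t]$; the paper uses exactly this. The only issue is bounding the right endpoint $d(c(t),c_v(t))$, which the paper handles by inserting $c(|\gamma|)=\gamma c(0)$:
\[
d(c(t),c_v(t))\le d(c(t),c(|\gamma|))+d(c(|\gamma|),c_v(t))\le |t-|\gamma||+a(\theta)+a(\rho(\theta))\le 3\bigl(a(\theta)+a(\rho(\theta))\bigr).
\]
This yields the same constant $\tfrac{6}{\kappa_1}\bigl(\tfrac{2\kappa_2}{\kappa_1}+3\bigr)\theta$ without any auxiliary reparametrization. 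So your detour is harmless but unnecessary.
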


\begin{proof}
	Let $\rho(\theta)=2\big(\frac{\kappa_2}{\kappa_1}+1\big)\theta$.
	The existence of an axis $c$ of $\gamma$ with $c(-\infty)\in \Pa_{\rho(\theta)}(v_0)$ and $c(+\infty)\in \Fu_{\rho(\theta)}(v_0)$
	follows directly from Proposition \ref{prop:c} and Lemma \ref{lem:closing-0}.
	
	To prove the estimate for $\vert \gamma\vert$, we consider a vector $v\in A_\theta(v_0) \cap \phi^{-t}\gamma_* A_\theta(v_0)$.
	Hence,  $v\in A_\theta(v_0)$ and  $\phi^t(v)\in \gamma_* A_\theta(v_0)$. The definition of  $A_\theta(v_0)$ implies $(v^-,v^+)\in  \Pa_\theta(v_0)\times \Fu_\theta(v_0)$,  $d(\pi v_0,\pi v)\leq a(\theta)$ and $ d(\gamma (\pi v_0), \pi \phi^t (v)) \leq a(\theta)$.
	
	Using the triangle inequality we obtain
	\begin{align*}
		a(\theta) \geq d(\gamma (\pi v_0), \pi \phi^t(v)) &\geq d(\gamma (\pi v_0), \pi v) - d(\pi v, \pi \phi^t(v)) \\
		&\geq d(\gamma (\pi v_0), \pi v_0) - d(\pi v_0, \pi v) -t \\
		&\geq d(\gamma (\pi v_0), \pi v_0) -a(\theta)-t,
	\end{align*}
and therefore $d(\gamma (\pi v_0), \pi v_0) -t \leq 2a(\theta)$. Since $\vert \gamma \vert=\inf_{p \in X} d(p,\gamma p)$ and $a(\theta)\leq \frac{2}{\kappa_1}\theta$ for $\theta\leq\theta_0\leq \frac{\pi}{4}$, we obtain
\[
\vert \gamma \vert -t \leq 2a(\theta) \leq \frac{4}{\kappa_1}\theta.
\]

To obtain a lower bound for   $\vert\gamma\vert$  we assume that the axis $c$ is parametrized such that $d(\pi v_0, c(0)) =d(\pi v_0, c)$. We then obtain
\begin{equation}\label{eq:1}
d(\pi v_0, c(0)) =d(\pi v_0, c) \le  a(\rho(\theta)) \leq \frac{2}{\kappa_1}\rho(\theta),
\end{equation}
due to $(c(-\infty), c(+\infty))\in \Pa_{\rho(\theta)}(v_0)\times \Fu_{\rho(\theta)}(v_0)$ and Lemma \ref{lem:dis}.
Since  $\gamma c(0) =c( \vert \gamma \vert )$, the triangle inequality implies
\begin{align*}
	 \vert \gamma \vert & = d(c(0),  \gamma c(0)) \ge d(\pi v_0, \gamma \pi v_0) - d(  \gamma c(0),  \gamma(\pi v_0)) - d(\pi v_0, c(0))\\
	&  \ge d(\pi v_0, \gamma \pi v_0) -  \frac{4}{\kappa_1}\rho(\theta) \\
	&\geq d(\pi v,\pi \phi^t(v)) - d(\pi \phi^t(v), \gamma \pi v_0) -d(\pi v_0,\pi v) -\frac{4}{\kappa_1}\rho(\theta)\\
	& \geq t -2a(\theta) -\frac{4}{\kappa_1}\rho(\theta),
\end{align*}
which yields the lower bound
$$
\vert \gamma \vert =T \ge t -  \frac{4}{\kappa_1}(\theta + \rho(\theta)) = t -  \frac{4}{\kappa_1}\Big(\frac{2\kappa_2}{\kappa_1}+3\Big)\theta,
$$
as claimed.

Finally, let $c_v\colon[0,t]\rightarrow X$
be the geodesic with $\dot c_v(0)=v$.
Since $d(\pi v_0, \pi v)\leq a(\theta)$, $\dot c_v(t)=\phi^t(v)\in\gamma_* A_\theta(v_0)$ and using Eq. \eqref{eq:1}, we have
\[
d(c(0), c_v(0))\leq d(c(0), \pi v_0)+ d(\pi v_0, c_v(0))\leq  a(\rho(\theta))+a(\theta),
\]
and
\begin{align*}
d(c(\vert\gamma\vert), c_v(t)) & \leq d(c(\vert\gamma\vert), \gamma(\pi v_0)) + d(\gamma(\pi v_0), c_v(t))\\
&\leq a(\rho(\theta)) +a(\theta),
\end{align*}
where we also used that $\gamma c(0)=c(\vert\gamma\vert)$ and that $\gamma$ is an isometry.
The above estimate together with \eqref{eq:length-gamma} implies
\begin{align*}
	d(c(t), c_v(t))& \leq d(c(t), c(\vert\gamma\vert) + d(c(\vert\gamma\vert), c_v(t))\\
	& \leq \vert t-\vert\gamma\vert \vert +a(\rho(\theta))+a(\theta)\\
	& \leq 2a(\rho(\theta)) +2a(\theta) + a(\rho(\theta))+a(\theta)= 3a(\rho(\theta))+3a(\theta).
\end{align*}
By convexity of the function $s\mapsto d(c(s), c_v(s))$ and the estimates for $a(\theta)$ and $a(\rho(\theta))$ we obtain
\[
d(c(s), c_v(s))\leq 3a(\rho(\theta)) + 3a(\theta) \leq \frac{6}{\kappa_1}\Big( \frac{2\kappa_2}{\kappa_1}+3\Big)\theta,
\]
for all $s \in [0,t] $ which concludes the proof.
\end{proof}

The estimate in the above proposition can be somewhat improved if we additionally assume that the 
vector $v$ satisfies $\gamma \pi v = \pi\phi^t(v)$.

	\begin{proposition}\label{lemm:length-gamma-2}
	Let $\theta_0$, $t_0$, and $\rho(\theta)$ as in Prop. \ref{prop:c} and let
	$\gamma\in\Gamma_{\theta}(v_0,t)$ with $\theta\leq \theta_0$ and $t\geq t_0$. 
	Assume further that there exists a vector $v\in A_\theta(v_0)\cap \phi^{-t}\gamma_*A_\theta(v_0)$
	such that $\gamma \pi v= \pi \phi^t(v)$. 
	Then $\gamma$ 	has an axis $c$ with $c(-\infty)\in \Pa_{\rho(\theta)}(v_0)$ and $c(+\infty)\in \Fu_{\rho(\theta)}(v_0)$
	whose length satisfies
	\begin{equation}\label{eq:length-gamma-1}
		0 \le t- \vert\gamma\vert  \leq \frac{4}{\kappa_1}\Big(\frac{2\kappa_2}{\kappa_1}+3\Big)\theta.
	\end{equation}
	Furthermore
	\begin{equation}\label{eq:dist-1}
		d(c(s), c_{v}(s))\leq \frac{6}{\kappa_1} \Big(\frac{2\kappa_2}{\kappa_1}+3\Big)\theta \quad\forall s\in[0,t],
	\end{equation}
	where $c_{v}\colon [0,t]\rightarrow X$ is the geodesic with starting vector $v$.
\end{proposition}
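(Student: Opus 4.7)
The plan is to adapt the proof of Proposition \ref{lemma:length-gamma}, inserting the additional hypothesis $\gamma\pi v = \pi\phi^t(v)$ only where it sharpens the upper bound on $\vert\gamma\vert$ and slightly streamlines the endpoint-distance estimates.

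First, I would recycle the existence step verbatim: Proposition \ref{prop:c} gives $\gamma \in \Gamma^*_{\rho(\theta)}(v_0)$, hence Lemma \ref{lem:closing-0} supplies an axis $c$ of $\gamma$ with $c(-\infty) \in \Pa_{\rho(\theta)}(v_0)$ and $c(+\infty) \in \Fu_{\rho(\theta)}(v_0)$. The lower bound
$\vert\gamma\vert \ge t - \frac{4}{\kappa_1}\bigl(\frac{2\kappa_2}{\kappa_1}+3\bigr)\theta$
carries over verbatim from Proposition \ref{lemma:length-gamma}, since that part of the argument never invokes the footpoint hypothesis.

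The new upper bound is the one place where the hypothesis genuinely enters. The geodesic segment $c_v\vert_{[0,t]}$ has length $t$ and joins $\pi v$ to $\pi\phi^t(v) = \gamma\pi v$, so $d(\pi v, \gamma\pi v) \le t$, and therefore
\[
\vert\gamma\vert = \inf_{p \in X} d(p,\gamma p) \le d(\pi v, \gamma\pi v) \le t.
\]
This immediately yields $0 \le t - \vert\gamma\vert$ and, combined with the lower bound, the double inequality \eqref{eq:length-gamma-1}.

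For the distance estimate I would reuse the convexity scheme from Proposition \ref{lemma:length-gamma}. Parametrize $c$ so that $d(\pi v_0, c(0)) = d(\pi v_0, c) \le a(\rho(\theta))$; then $d(c(0), c_v(0)) \le a(\rho(\theta)) + a(\theta)$. The symmetric estimate at time $t$ becomes slightly cleaner now, since using $\gamma c(0) = c(\vert\gamma\vert)$ and $\pi\phi^t(v) = \gamma\pi v$ one has
\[
d(c(\vert\gamma\vert), c_v(t)) \le d(\gamma c(0), \gamma\pi v_0) + d(\gamma\pi v_0, \gamma\pi v) \le a(\rho(\theta)) + a(\theta).
\]
Combining with $\vert t - \vert\gamma\vert\vert \le 2 a(\rho(\theta)) + 2 a(\theta)$ from \eqref{eq:length-gamma-1} gives $d(c(t), c_v(t)) \le 3 a(\rho(\theta)) + 3 a(\theta)$, and convexity of $s \mapsto d(c(s), c_v(s))$ propagates this bound to every $s \in [0,t]$. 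Applying $a(\theta) \le \frac{2}{\kappa_1}\theta$ and $a(\rho(\theta)) \le \frac{2}{\kappa_1}\rho(\theta)$ yields the constant $\frac{6}{\kappa_1}\bigl(\frac{2\kappa_2}{\kappa_1}+3\bigr)$ claimed in \eqref{eq:dist-1}.

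There is no substantive obstacle here; the core insight is the single-line observation that the footpoint hypothesis forces $\vert\gamma\vert \le t$. The remaining work is purely bookkeeping to confirm that the endpoint bounds from Proposition \ref{lemma:length-gamma} remain valid when $\vert\gamma\vert$ is constrained to lie below $t$.
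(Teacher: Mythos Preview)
Your proposal is correct and follows essentially the same route as the paper: the existence of the axis via Proposition~\ref{prop:c} and Lemma~\ref{lem:closing-0}, the one-line observation $\vert\gamma\vert \le d(\pi v,\gamma\pi v)=t$ from the footpoint hypothesis, the lower bound $\vert\gamma\vert \ge t-2a(\theta)-2a(\rho(\theta))$ (which you import from Proposition~\ref{lemma:length-gamma} while the paper re-derives it, but the computation is identical), and the convexity argument for the distance estimate. The only cosmetic difference is that the paper routes the endpoint bound $d(c(\vert\gamma\vert),c_v(t))$ directly through the isometry $\gamma$ applied to $d(c(0),\pi v)$, whereas you insert the intermediate point $\gamma\pi v_0$; both yield $a(\theta)+a(\rho(\theta))$.
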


\begin{proof}
	Let $\rho(\theta)=2\big(\frac{\kappa_2}{\kappa_1}+1\big)\theta$.
	As in the previous proposition, the existence of the axis $c$ such that $c(-\infty)\in \Pa_{\rho(\theta)}(v_0)$
	and $c(+\infty)\in \Fu_{\rho(\theta)}(v_0)$ follows directly  from Proposition \ref{prop:c} and Lemma \ref{lem:closing-0}.
	Clearly, the length of $c$ is $\vert\gamma\vert$.
	
	Since $\vert \gamma \vert =\inf_{p \in X} d(p,\gamma p)$ and $\gamma \pi v=\pi \phi^t(v)$,
	we have
	$$
	t=d(\pi v, \pi\phi^t(v)) = d(\pi v, \gamma\pi v)\geq  \vert \gamma \vert.
	$$
	Parametrizing $c$ such that $d(\pi v_0,c(0))= d(\pi v_0, c)$, we have 
	\[
		d(\gamma \pi v, \gamma c(0))= d(\pi v, c(0))
		\leq d(\pi v, \pi v_0) + d(\pi v_0, c(0))  
		 \leq a(\theta) + a(\rho(\theta)),
	\]
	which follows from Lemma \ref{lem:dis} and the assumption $v\in A_\theta(v_0)$. 
	
	Using the reverse triangle inequality we obtain
	\begin{align*}
		\vert \gamma \vert & = d(c(0), \gamma c(0)) \geq d(\gamma \pi v, \pi v) 
		- d(\pi v, c(0)) -d(\gamma \pi v, \gamma c(0))\\
		& \geq d(\pi \phi^t(v), \pi v) -2a(\theta)  -2a(\rho(\theta)) \geq t -\frac{4}{\kappa_1}\theta -\frac{4}{\kappa_1}\rho(\theta),
	\end{align*}
	which implies the estimate \eqref{eq:length-gamma-1} using the definition of $\rho(\theta)$ 
	and the estimate $a(\theta)\leq \frac{2}{\kappa_1}\theta$ for $\theta\leq \theta_0\leq \frac{\pi}{4}$.
	
	It remains to estimate the distance between $c(s)$ and $c_v(s)$ for $s\in[0,t]$.
	Since
	\begin{align*}
		d(c(\vert \gamma\vert ), c_{v}(t))& = d(c(\vert \gamma\vert), \pi \phi^t (v)) = d(\gamma c(0), \gamma \pi  v)\\
		& = d(c(0), \pi v)\leq a(\theta) + a(\rho(\theta)),
	\end{align*}
	we have
	\begin{align*}
		d(c(t), c_{v}(t))&\leq d(c(t), c(\vert \gamma \vert)) +d( c(\vert\gamma\vert), c_{v}(t))\\
		& \leq t-\vert \gamma \vert +a(\theta) + a(\rho(\theta))\leq 3a(\theta) + 3a(\rho(\theta)).
	\end{align*}
	By convexity of the function $s\mapsto d(c(s), c_{v}(s))$, we have
	\[
	d(c(s), c_{v}(s))\leq 3a(\theta) + 3a(\rho(\theta)) \leq \frac{6}{\kappa_1} \Big(\frac{2\kappa_2}{\kappa_1}+3\Big) \quad\forall s\in[0,t].
	\]
\end{proof}	

\begin{remark}\label{rmk:improved-length}
	If one further assumes that $\pi v=\pi v_0$, the estimates \eqref{eq:length-gamma-1} 
	and \eqref{eq:dist-1} become
	\[
	0 \le t- \vert\gamma\vert  \leq \frac{8}{\kappa_1}\Big(\frac{\kappa_2}{\kappa_1}+1\Big)\theta
	\]
	and 
	\[
	d(c(s), c_{v}(s))\leq \frac{12}{\kappa_1}\Big(\frac{\kappa_2}{\kappa_1}+1\Big)\theta \quad\forall s\in[0,t],
	\]
	respectively, since $d(\pi v, c(0))=d(\pi v_0, c(0))\leq a(\rho(\theta))$.
\end{remark}

We can finally state the announced quantitative version of the closing lemma for the geodesic flow on the manifold $(M,g)$.
For  that we recall the definition of the dynamical metrics $d_1$ on $SM$ and introduce its lift $\tilde d_1$ onto $SX$:
$$
d_1(v,w)=\max_{t\in[-1,1]} d(c_v(t), c_w(t)), \;  \text {for all}  \; v, w \in SM
$$
and
$$
\tilde d_1(v,w)=\max_{t\in[-1,1]} \tilde d(c_v(t), c_w(t)), \;  \text {for all}  \; v, w \in SX,
$$
where  $d$ is the metric on $M$ and $\tilde d$ its lift on $X$.
We also need the following Lemma.
\begin{lemma}\label{lem:theta_0}
Consider the function
$a(\theta):=  \frac{1}{\kappa_1} \arcosh \left(\frac{1}{\cos (\theta)} \right) $
introduced in Lemma \ref{lem:dis}. Then
$\beta\leq a(\kappa_1\beta)$  for all $\beta \in [0,  \frac{\pi}{2 \kappa_1}]$ and
$a$ is monotonically increasing on $[0,  \min\big\{\frac{\pi}{2},\frac{\pi}{2 \kappa_1}\big\}]$.

\end{lemma}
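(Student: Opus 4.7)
The plan is a short calculus exercise in two independent parts, which I would organize as follows.

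First I would dispatch the monotonicity claim. On the interval where $a$ is defined and finite, namely $[0,\pi/2)$, the function $\cos\theta$ is strictly decreasing from $1$ to $0$, so $1/\cos\theta = \sec\theta$ is strictly increasing from $1$ to $+\infty$. Composing with $\arcosh$, which is strictly increasing on $[1,\infty)$, and dividing by the positive constant $\kappa_1$, yields that $a$ is strictly increasing on $[0,\pi/2)$. Since $\min\{\pi/2,\pi/(2\kappa_1)\}\le \pi/2$, the first assertion follows on the advertised domain.

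For the inequality $\beta\le a(\kappa_1\beta)$, I would substitute $\theta:=\kappa_1\beta\in[0,\pi/2]$ and rewrite the inequality as $\cosh\theta\le\sec\theta$, i.e.
\[
g(\theta):=\cos\theta\,\cosh\theta \le 1 \qquad \text{for all } \theta\in[0,\pi/2].
\]
Clearly $g(0)=1$. A direct computation gives
\[
g'(\theta)=\cos\theta\sinh\theta-\sin\theta\cosh\theta,\qquad g''(\theta)=-2\sin\theta\sinh\theta\le 0
\]
on $[0,\pi/2]$. Thus $g'$ is non-increasing with $g'(0)=0$, so $g'(\theta)\le 0$ throughout, and consequently $g$ is non-increasing on $[0,\pi/2]$. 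Therefore $g(\theta)\le g(0)=1$, which after applying $\arcosh$ and dividing by $\kappa_1$ gives $\beta=\theta/\kappa_1\le a(\theta)=a(\kappa_1\beta)$, as required.

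The main (and only) potential snag is the boundary point $\theta=\pi/2$, where $\sec\theta$ blows up; but since $\arcosh(\sec\theta)\to+\infty$ there, the inequality is trivially preserved at the endpoint, and strict inequality on the interior is enough. No further ingredients are needed.
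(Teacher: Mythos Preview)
Your proof is correct and follows essentially the same approach as the paper: both reduce the inequality $\beta\le a(\kappa_1\beta)$ to $g:=\cos\cdot\cosh\le 1$, then show $g''\le 0$ and $g'(0)=0$ to conclude $g'\le 0$ and hence $g\le g(0)=1$; the monotonicity of $a$ is handled in both cases by composing monotone functions. The only (cosmetic) difference is that you substitute $\theta=\kappa_1\beta$ at the outset to remove the $\kappa_1$ factors, whereas the paper keeps $\beta$ as the variable throughout.
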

\begin{proof}
	The function $ \beta \mapsto  a(\kappa_1\beta)$ is non-negative on  $[0,  \frac{\pi}{2 \kappa_1}]$ and assumes the values $0$ resp. $\infty$ at $0$ resp. $\frac{\pi}{2 \kappa_1}$.
Note, that $\beta\leq a(\kappa_1 \beta)$ is equivalent to
$$
g(\beta) := \cosh(\kappa_1\beta)\cos(\kappa_1 \beta)\leq 1.
$$
Since $g(0)=1$ and  $g(\frac{\pi}{2 \kappa_1})=0$, it is enough to prove that $g'(\beta) \le 0$ for all $\beta \in [0,  \frac{\pi}{2 \kappa_1}]$. We have
 $$
 g'(\beta)=\kappa_1 \sinh(\kappa_1 \beta)\cos(\kappa_1\beta)-\kappa_1\cosh(\kappa_1\beta)\sin(\kappa_1\beta),
 $$
 which in particular  implies $g'(0) =0$ and that $g'(\frac{\pi}{2\kappa_1})<0$. Since
 $$
 g''(\beta) = -2 \kappa_1^2  \sinh(\kappa_1 \beta)\sin(\kappa_1\beta)) \le 0
$$
we obtain $g'(\beta) \le 0$ on $[0,  \frac{\pi}{2 \kappa_1}]$, which proves the first part of the lemma.

Finally, for $\theta\in [0,\pi/2)$ the function $a(\theta)$ is monotonically increasing, hence also on the interval $\big[0,  \min\big\{\frac{\pi}{2},\frac{\pi}{2 \kappa_1}\big\}]$.

\end{proof}

\begin{theorem}\label{thm:closing}
There exist $\delta_0=\delta_0(\kappa_1,\kappa_2)\in\big(0,\frac{1}{2}\big)$ and $t_0=t_0(\kappa_1,\kappa_2)>1$
such that for all $\delta\leq \delta_0$, all $T\geq t_0$ and all $w\in SM$ with $d_1(w,\phi^T(w))\leq \delta$
there exist $u\in SM$, $T'>0$, and $C= \frac{2}{\kappa_1}\Big(\frac{2\kappa_2}{\kappa_1}+3\Big)\max\{2\pi,\kappa_1\}$ such that $\phi^{T'}(u)=u$,
\[
\vert T-T' \vert \leq 2 C \delta,
\]
and
\[
d_1(\phi^s(w), \phi^s(u))\leq (5C +1)\delta \quad\forall\, s\in[0,T].
\]
\end{theorem}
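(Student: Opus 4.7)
My plan is to prove Theorem~\ref{thm:closing} by lifting the $d_1$-recurrence to $X$, verifying the hypotheses of Proposition~\ref{lemma:length-gamma}, and then projecting back to $M$. Let $\tilde w\in SX$ be a lift of $w$ and set $v_0 := \tilde w$, $p_0 := \pi\tilde w$. For $\delta_0$ smaller than the injectivity radius of $M$, the assumption $d_1(w,\phi^T w)\leq\delta$ lifts to the existence of a unique $\gamma\in\Gamma$ with $\tilde d_1\bigl(\gamma_*^{-1}\phi^T\tilde w,\,\tilde w\bigr)\leq\delta$: the closest lift of $c_{\phi^T w}(t)$ to $c_{\tilde w}(t)$ is unique and varies continuously (hence is constant) in $t\in[-1,1]$ by discreteness of $\Gamma$.

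The key step is to choose
\[
\theta := \max\{2\pi,\kappa_1\}\,\delta
\]
and verify that $v := \gamma_*^{-1}\phi^T\tilde w$ lies in $A_\theta(v_0)$, so that $\tilde w\in A_\theta(v_0)\cap \phi^{-T}\gamma_* A_\theta(v_0)$. The angular condition $v^\pm\in\Pa_\theta(v_0)\times\Fu_\theta(v_0)$ follows from Lemma~\ref{lemma:angles-at-infinity} combined with Remark~\ref{rmk:angle-at-infinity-approx}, which give $\sphericalangle_{p_0}(v^\pm,v_0^\pm)\leq f(\delta)\leq 2\pi\delta\leq\theta$. The foot-point condition $d(\pi v,p_0)\leq a(\theta)$ follows from $d(\pi v,p_0)\leq\delta$ combined with Lemma~\ref{lem:theta_0}, which yields $a(\theta)\geq a(\kappa_1\delta)\geq\delta$. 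I then fix $\delta_0 := \theta_0/\max\{2\pi,\kappa_1\}$ so that $\theta\leq\theta_0$, and take $t_0$ from Proposition~\ref{prop:c}; it follows that $\gamma\in\Gamma_\theta(v_0,T)$.

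Proposition~\ref{lemma:length-gamma} now provides an axis $c\subset X$ of $\gamma$ with period $T' := |\gamma|$ satisfying
\[
|T-T'|\leq \frac{4}{\kappa_1}\Bigl(\frac{2\kappa_2}{\kappa_1}+3\Bigr)\theta = 2C\delta,
\qquad
\tilde d(c(s),c_{\tilde w}(s))\leq \frac{6}{\kappa_1}\Bigl(\frac{2\kappa_2}{\kappa_1}+3\Bigr)\theta = 3C\delta
\]
for every $s\in[0,T]$. Setting $u := \dot{\bar c}(0)$ with $\bar c := \pr\circ c$ gives a closed orbit of $\phi^t$ of period $T'$, and projecting the second estimate to $M$ yields $d_M(c_w(s),\bar c(s))\leq 3C\delta$ on $[0,T]$.

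It remains to upgrade this bound to the interval $[-1,T+1]$ required by the definition of $d_1$. For $s\in[-1,0]$ I would write $\bar c(s)=\bar c(s+T')$ and insert the intermediate points $\bar c(s+T)$ and $c_w(s+T)$ in the triangle inequality: since $\bar c$ has unit speed, $d_M(\bar c(s+T'),\bar c(s+T))\leq |T-T'|\leq 2C\delta$; since $s+T\in[T-1,T]\subset[0,T]$ the previous paragraph gives $d_M(\bar c(s+T),c_w(s+T))\leq 3C\delta$; and since $|s|\leq 1$ the hypothesis gives $d_M(c_w(s+T),c_w(s))\leq\delta$, summing to $(5C+1)\delta$. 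The symmetric argument on $[T,T+1]$ yields the same bound, and taking the maximum over $s+\tau\in[-1,T+1]$ produces $d_1(\phi^s w,\phi^s u)\leq (5C+1)\delta$ for $s\in[0,T]$. The only delicate point is the angular/foot-point verification of the second paragraph, where the purely dynamical closeness in $d_1$ must simultaneously be converted into smallness of angles at infinity and smallness of the foot-point distance; once $\gamma\in\Gamma_\theta(v_0,T)$ is established the hard work has already been done inside Proposition~\ref{lemma:length-gamma}, and the rest is bookkeeping with the triangle inequality.
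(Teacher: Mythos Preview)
Your proposal is correct and follows essentially the same strategy as the paper's own proof: lift to $X$, set $\theta=\max\{2\pi,\kappa_1\}\delta$, use Lemma~\ref{lemma:angles-at-infinity} with Remark~\ref{rmk:angle-at-infinity-approx} for the angular conditions and Lemma~\ref{lem:theta_0} for the foot-point condition to place $\gamma_*^{-1}\phi^T v_0$ in $A_\theta(v_0)$, invoke Proposition~\ref{lemma:length-gamma}, and then extend the distance bound from $[0,T]$ to $[-1,T+1]$ by the triangle inequality and the periodicity of the closed orbit. The only cosmetic difference is that you perform the extension step on $M$ (using $\bar c(s)=\bar c(s+T')$) whereas the paper stays on $X$ (using $\gamma c(s)=c(s+T')$); the two computations are equivalent.
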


\begin{proof}
Let $c_w\colon [0,T]\rightarrow M$ be the geodesic such that $\phi^t(w)=\dot c_w(t)$ for all $t\in[0,T]$.

Consider the lift $\tilde c$ of $c_w$ on the universal cover $X$  of $M$, let $p=\tilde{c}(0)$ and $v_0= \dot {\tilde c}(0)$.

From the assumption $d_1(w,\phi^T(w))\leq\delta$ follows the existence of $\gamma\in\Gamma\setminus \id$ such that
$\tilde d_1(\gamma_* v_0,  \dot {\tilde c}(T))\leq\delta$, i.e. $\tilde d(\gamma(\tilde c(s)), \tilde c(T+s)))\leq\delta$, for $s \in [-1,1]$.
We first observe that
\begin{align*}
	\delta & \geq \tilde d_1(\gamma_*(v_0), \dot {\tilde c}(T))=\tilde d_1 (v_0, \gamma^{-1}_*\phi^T(v_0))=
	 \tilde d_1 (v_0, \phi^{T}\gamma^{-1}_*(v_0)).
\end{align*}
From Lemma \ref{lemma:angles-at-infinity} and Remark \ref{rmk:angle-at-infinity-approx}
 follows
\begin{equation}\label{eq:auxiliary}
(\gamma^{-1}_*\phi^T(v_0))^+ \in \Fu_{2 \pi \delta}(v_0)
 \; \text{and} \; (\gamma^{-1}_*\phi^T(v_0))^- \in \Pa_{2 \pi \delta}(v_0)
\end{equation}
for all $ \delta \in (0, \frac{1}{2}) $.

 Let now $\theta_0$ and $t_0$ as in Proposition \ref{prop:c},
 and set $\theta_\delta:=\max\{2\pi,\kappa_1\}\delta$.
	Choose $\delta_0\in(0, \frac{\pi}{2 \kappa_1}]$ such that $\theta_\delta\leq\theta_0$ for all $\delta\in(0,\delta_0]$, and
	let $T\geq t_0$.
 Recall that
 \begin{multline}\label{theta}
 	A_\theta(v_0) := \{v\in SX \mid (v^-, v^+) \in \Pa_\theta(v_0)\times\Fu_\theta(v_0) \\
 	\text{ and } \tilde d(\pi v, \pi v_0) \le a(\theta) \}.
 \end{multline}
We claim that
$$
A_{\theta_\delta}(v_0) \cap \phi^{-T}\gamma_* A_{\theta_\delta}(v_0) \not= \emptyset.
$$
Since $v_0 \in A_{\theta_\delta}(v_0) $ it is enough to show that $\phi^{-T}\gamma_*(v_0) \in A_{\theta_\delta}(v_0)$.
From \eqref{eq:auxiliary}  and the definition of $\theta_\delta$ it follows
$$
 ( (\gamma^{-1}_*\phi^T(v_0))^-, (\gamma^{-1}_*\phi^T(v_0))^+ ) \in  \Pa_{ \theta_\delta}(v_0) \times  \Fu_{ \theta_\delta}(v_0).
$$
Since
\[
\tilde d(\pi (v_0),\pi \phi^{-T}\gamma_*(v_0)) \leq \tilde d_1 (v_0, \phi^{T}\gamma^{-1}_*(v_0)) \leq \delta \le a(\kappa_1 \delta)  \leq a(\theta_\delta),
\]
 by Lemma \ref{lem:theta_0} and by the choice of $\theta_\delta$, the claim is proved.


Proposition \ref{lemma:length-gamma} now implies the existence of an axis $c$ of $\gamma$ of length $T'$ such that
\begin{equation}\label{eq:action-difference-closing}
	\begin{split}
	\vert T- T' \vert & \leq \frac{4}{\kappa_1}\Big(\frac{2\kappa_2}{\kappa_1}+3\Big)\theta_\delta \\
	&\leq \frac{4}{\kappa_1}\Big(\frac{2\kappa_2}{\kappa_1}+3\Big)\max\{2\pi,\kappa_1\}\delta =: 2C\delta
		\end{split}
\end{equation}
and
\[
\tilde d(\tilde c(t), c(t)) \leq  \frac{6}{\kappa_1}\Big( \frac{2\kappa_2}{\kappa_1}+3\Big)\theta_\delta=3C\delta\qquad \forall\, t\in[0,T].
\]
Since $d_1(w,\phi^T(w))\leq \delta$ we have for the lift $\tilde c$ of $c_w$ that
$$
 \tilde d( \gamma \tilde c(s) , \tilde c(s+T)) \le \delta
$$
for $ s \in [-1,1]$.
This implies for $s\in[0,1]$
\begin{align*}
	\tilde d(\tilde c(T+s), c(T+s)) & \leq \tilde d( \tilde c(T+s), c(T'+s))+ \tilde d (c(T'+s), c(T+s))\\
	& = \tilde d(\tilde c(T+s), \gamma  c(s)) + \vert T- T'\vert\\
	& \leq  \tilde d(\tilde c(T+s), \gamma  \tilde c(s)) +\tilde d(\gamma \tilde c(s), \gamma  c(s)) + \vert T- T'\vert\\
	& \leq \delta +\tilde d(\tilde c(s), c(s)) +2C\delta\\
	& \leq \delta + 3C\delta+2C\delta = (5C+1)\delta.
\end{align*}
For  $ s \in [-1,0]$ we calculate
\begin{align*}
	\tilde d(\tilde c(s), c(s)) & = \tilde d( \gamma \tilde c(s),  \gamma c(s)) \\
                                & \leq  \tilde d(\gamma \tilde c(s), \tilde c(s+T)) +\tilde d(\tilde c(s+T), c(s+T)) +\tilde d(c(s+T), c(s+T'))\\
                                & \leq \delta + 3C\delta +\vert T- T'\vert \leq (5C+1)\delta,
\end{align*}
since $\gamma c(s)=\gamma(s+T)$ and $s+T\in[T-1,T]\subset [0,T]$.
Putting all the estimates together yields
$$
\tilde d(\tilde c(t), c(t)) \le  (5C+1)\delta
$$
for all $t \in [-1,T+1]$ and therefore,
\begin{align*}
\tilde d_{1}( \dot{\tilde c}(t), \dot c(t)) &  \leq (5C +1)\delta
\end{align*}
for all $t\in[0,T]$ where $C$ is the one from \eqref{eq:action-difference-closing}.
By projecting onto $M$ we  obtain the closed geodesic $c_{u}=\pr\circ c$
with starting vector $u\in SM$ of period $T'$ satisfying \eqref{eq:action-difference-closing}.
In particular $u$ is such that $u=\phi^{T'}(u)$ and
\[
d_1(\phi^t(u), \phi^t(w))\leq (5 C +1)\delta
\]
for all $t\in[0,T]$ as desired.
\end{proof}

If $w$ and $\phi^T(w)$ have the same foot-point projection, the underlying closed geodesic is
a geodesic loop and assuming $d_1(w,\phi^T(w))$ to be small enough, it is easy to see that the deck transformation associated to (the lift of)
the geodesic loop has an axis giving the existence of a closed geodesic nearby.
In this case, Proposition \ref{lemm:length-gamma-2} gives slightly improved action difference and distance estimate.

\begin{theorem}\label{thm:closing-2}
There exist $\delta_0=\delta_0(\kappa_1,\kappa_2)\in\big(0,\frac{1}{2}\big)$ and $t_0=t_0(\kappa_1,\kappa_2)>1$ such that for all $\delta\leq \delta_0$, all $T\geq t_0$ and all $w\in SM$ such that $d_1(w,\phi^T(w))\leq \delta$
and $\pi w=\pi\phi^T(w)$ there exist $u\in SM$, $T'>0$, and $\tilde C = \frac{4\pi}{\kappa_1}\big(\frac{\kappa_2}{\kappa_1}+1\big)$ such that $\phi^{T'}(u)=u$,
\[
0 < T-T' \leq 4 \tilde C \delta,
\]
and

\[
d_1(\phi^s(w), \phi^s(u))\leq (10 \tilde C +1)\delta \quad\forall\, s\in[0,T].
\]

\end{theorem}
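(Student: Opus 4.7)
The plan is to follow the same architecture as the proof of Theorem \ref{thm:closing}, but to exploit the extra hypothesis $\pi w = \pi\phi^T(w)$ so that Remark \ref{rmk:improved-length} applies in place of Proposition \ref{lemma:length-gamma}. This replaces the Anosov-type constant $\frac{2}{\kappa_1}(\frac{2\kappa_2}{\kappa_1}+3)$ by the sharper $\frac{4}{\kappa_1}(\frac{\kappa_2}{\kappa_1}+1)$, and a single substitution $\theta=2\pi\delta$ in the quantitative estimates will then produce the constant $\tilde{C}=\frac{4\pi}{\kappa_1}(\frac{\kappa_2}{\kappa_1}+1)$.

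First I would fix a lift $\tilde c\colon\R\to X$ of $c_w$, set $v_0:=\dot{\tilde c}(0)$ and $p_0:=\pi v_0$. Since $\pi w=\pi\phi^T(w)$ in $M$, there is a (unique, non-trivial) $\gamma\in\Gamma$ with $\gamma p_0=\pi\phi^T(v_0)$, and the hypothesis $d_1(w,\phi^T w)\le\delta$ lifts to $\tilde d_1(v_0,\gamma_*^{-1}\phi^T v_0)\le\delta$. Note that by construction $\pi(\gamma_*^{-1}\phi^T v_0)=p_0$, so the basepoint condition in the definition of $A_\theta(v_0)$ is free. Applying Lemma \ref{lemma:angles-at-infinity} together with Remark \ref{rmk:angle-at-infinity-approx} at $p_0$ gives $(\gamma_*^{-1}\phi^T v_0)^\pm\in\Pa_{2\pi\delta}(v_0)\times\Fu_{2\pi\delta}(v_0)$, hence $\gamma_*^{-1}\phi^T v_0\in A_{\theta_\delta}(v_0)$ with $\theta_\delta:=2\pi\delta$. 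Equivalently $v_0\in A_{\theta_\delta}(v_0)\cap \phi^{-T}\gamma_*A_{\theta_\delta}(v_0)$, so $\gamma\in\Gamma_{\theta_\delta}(v_0,T)$. Choose $\delta_0$ such that $\theta_\delta\le\theta_0$ from Proposition \ref{prop:c}.

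The decisive point is that the choice $v=v_0$ simultaneously satisfies $\gamma\pi v=\pi\phi^T(v)$ and $\pi v=\pi v_0$ — exactly the two hypotheses required for Remark \ref{rmk:improved-length}. Invoking it with $\theta=\theta_\delta$ produces an axis $c$ of $\gamma$ with length $T':=|\gamma|$ satisfying
\[
0\le T-T'\le \tfrac{8}{\kappa_1}\bigl(\tfrac{\kappa_2}{\kappa_1}+1\bigr)\theta_\delta=4\tilde C\delta,
\]
and $\tilde d(c(s),\tilde c(s))\le \tfrac{12}{\kappa_1}(\tfrac{\kappa_2}{\kappa_1}+1)\theta_\delta=6\tilde C\delta$ for every $s\in[0,T]$. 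Projecting by $\pr$ yields the closed geodesic $c_u:=\pr\circ c$ with starting vector $u\in SM$ and period $T'$.

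The remaining step is to upgrade the $\tilde d$-bound on $[0,T]$ to a $d_1$-bound on $SM$, i.e.\ to extend the distance control to the enlarged interval $[-1,T+1]$. For $s\in[0,1]$ one uses $\gamma c(s)=c(T'+s)$, the lifted hypothesis $\tilde d(\gamma\tilde c(s),\tilde c(T+s))\le\delta$, and the triangle inequality to obtain $\tilde d(\tilde c(T+s),c(T+s))\le\delta+6\tilde C\delta+|T-T'|\le(10\tilde C+1)\delta$; the interval $[-1,0]$ is handled symmetrically by applying $\gamma$ to the pair $(\tilde c(s),c(s))$. The main obstacle, as in Theorem \ref{thm:closing}, is the careful bookkeeping of lifts: identifying the correct deck transformation $\gamma$ and translating the $SM$-hypothesis $d_1(w,\phi^T w)\le\delta$ into the precise intersection statement $v_0\in A_{\theta_\delta}(v_0)\cap\phi^{-T}\gamma_* A_{\theta_\delta}(v_0)$ with $v=v_0$ fulfilling \emph{both} $\gamma\pi v=\pi\phi^T v$ and $\pi v=\pi v_0$, so that the sharpened Remark \ref{rmk:improved-length} applies. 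Once this is in place, the remaining estimates are a routine tightening of those in Theorem \ref{thm:closing}.
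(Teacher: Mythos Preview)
Your proposal is correct and follows essentially the same route as the paper's proof: lift to $X$, use $\pi w=\pi\phi^T(w)$ to get $\gamma p_0=\pi\phi^T(v_0)$ so that the basepoint condition in $A_{\theta_\delta}(v_0)$ is automatic, set $\theta_\delta=2\pi\delta$, and apply Proposition~\ref{lemm:length-gamma-2} together with Remark~\ref{rmk:improved-length} with $v=v_0$ to obtain the sharpened constants, then extend the $\tilde d$-bound to $[-1,T+1]$ exactly as in Theorem~\ref{thm:closing}. The paper's argument is identical in structure and in all the key estimates.
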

\begin{proof}
	Using the same notation as in Theorem \ref{thm:closing}, the assumptions imply the existence of $\gamma\in\Gamma\setminus \id$
	such that $\gamma p= \tilde c(T)$ and $\tilde d_{1}(\gamma_* v_0, \dot {\tilde c}(T))\leq \delta$.
	
	Let $t_0$ and $\theta_0$ be as in Proposition \ref{prop:c} and choose $\delta_0= \frac{1}{2\pi}\theta_0$.
	Further, let $T\geq t_0$ and set $\theta_\delta=2\pi\delta$ for all $\delta\in[0,\delta_0]$. Then $\theta_\delta\leq\theta_0$ for $\delta\leq \delta_0$.
	
	As before, we prove that $\gamma_*^{-1}\phi^T(v_0)\in A_{\theta_\delta}(v_0)$.
	We have
	$$
	0 = \tilde d(\gamma p, \tilde c(T))=  \tilde d (p,\gamma^{-1}\tilde c(T))=  \tilde d(p, \pi \gamma_*^{-1}\phi^{T} v_0),
	$$
	and
	\[
		(\gamma^{-1}_*\phi^T(v_0))^+ \in \Fu_{\theta_\delta}(v_0)
		\; \text{and} \; (\gamma^{-1}_*\phi^T(v_0))^- \in \Pa_{\theta_\delta}(v_0)
	\]
	for all $ \delta \in (0, \frac{1}{2}) $ by Lemma \ref{lemma:angles-at-infinity} and Remark \ref{rmk:angle-at-infinity-approx}, which proves the claim.
	
	By Proposition \ref{lemm:length-gamma-2} and Remark \ref{rmk:improved-length} there exists an axis $c$ of $\gamma$ of length $T'<T$ satisfying
	\[
	0 \le T- T'  \leq \frac{8}{\kappa_1}\Big(\frac{\kappa_2}{\kappa_1}+1\Big)\theta_\delta=\frac{16 \pi}{\kappa_1}\Big(\frac{\kappa_2}{\kappa_1}+1\Big)\delta =: 4 \tilde C \delta ,
	\]
	and such that
	\[
	\tilde d(c(t), \tilde c(t))\leq \frac{12}{\kappa_1}\Big(\frac{\kappa_2}{\kappa_1}+1\Big)\theta_\delta=6\tilde C \delta  \qquad\forall t\in[0,T].
	\]
	For $s\in[0,1]$ we follow the estimate of the previous proposition to obtain
	\begin{align*}
	\tilde d (c(T+s), \tilde c(T+s))& \leq \tilde d( c(T+s), \gamma c(s)) + \tilde d(\gamma c(s), \gamma \tilde c(s) ) +\tilde d( \gamma \tilde c(s), \tilde c( T+s))\\
	& \leq T- T' + 6 \tilde C \delta +\delta = (10\tilde C +1)\delta.
	\end{align*}
	Similarly, for $s\in[-1,0]$ we obtain $\tilde d (c(s), \tilde c(s))\leq (10\tilde C +1)\delta$.
	Projecting onto $M$ we obtain the desired claim.
\end{proof}

\section{Construction of partner orbits }\label{sec:partner-orbits-general}

	Using the results of Section \ref{sec:closing} we can now prove the existence of partner orbits of self-crossing geodesics, together with the fact that they have smaller period
	and that they remain close to the original geodesic.
	
	We recall that a closed geodesic $c_w$ has a self-crossing at $p=c_w(0)$ with crossing angle $\eps$
	if there is a time $T_1\in(0,T)$ such that
	$c_w(0) =c_w(T_1)  $ and $\sphericalangle_{\pi w}( w, -\dot{c_w}(T_1))=\eps$.
	We will also refer to such geodesics as closed geodesic with self-crossing at $p$ at time $T_1$ to emphasise the time at which the
	intersection takes place.
	

\begin{theorem}\label{thm:partner-general}
	Let $M$ be a compact Riemannian manifold with sectional curvature $-\kappa_2^2 \leq K \leq -\kappa_1^2$ for $0<\kappa_1 \leq \kappa_2$.
	Let $\eps_0:= \frac{\pi}{16}\frac{\kappa_1}{\kappa_1+\kappa_2}$ and let $T_0=t_0(\kappa_1,\kappa_2)>1$ be the one from Prop. \ref{prop:c}.
	Then for all $\eps\leq \eps_0$, all $T_1, T_2\geq T_0$ and all closed geodesic $c_w: [0, T] \to M$ of period $T=T_1+T_2$
	with a self-crossing at $p=c_w(0)$ at time $T_1$ and crossing angle $\eps$ there exists a closed orbit $c_u:  [0, T'] \to M$ of period $T'<T$ satisfying
	\[
	T-T'  \leq \Big(\frac{18}{\kappa_1} + \frac{16\kappa_2}{\kappa_1^2}\Big) \eps,
	\]
	and such that for some $T_3\in[0,T^\prime]$ we have
	$$
	d(  c_u(s),  c_w[0, T_1]) \le   \left( \frac{25}{\kappa_1} +\frac{24\kappa_2}{\kappa_1^2} \right) \eps
	$$
    for $s \in [ 0,T_3]$  and
$$
	d(  c_u(s),  c_{-w}[0, T_2]) \le  \left( \frac{25}{\kappa_1} +\frac{24\kappa_2}{\kappa_1^2} \right) \eps,
	$$
	for $s \in [ T_3, T^\prime]$. Summarizing both inequalities yields
	$$
	d(  c_u(s), c_w[0, T] )\le \left( \frac{25}{\kappa_1} +\frac{24\kappa_2}{\kappa_1^2} \right) \eps
	$$
	for all $s \in [0, T^\prime]$.
\end{theorem}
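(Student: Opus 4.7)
The strategy is to apply Proposition \ref{lemm:length-gamma-2} with the improvements of Remark \ref{rmk:improved-length} to the deck transformation $\gamma := g_1 g_2^{-1}$, where $g_1,g_2 \in \Gamma$ are the axial isometries of the two loops $c_w|_{[0,T_1]}$ and $c_w|_{[T_1,T]}$; the axis of $\gamma$ will project to $c_u$. Fix a lift $\tilde p\in X$ of $p$ and let $\tilde c_w$ be the lift of $c_w$ with $\tilde c_w(0)=\tilde p$ and $\dot{\tilde c}_w(0)=w$. Set $\tilde p_1 := \tilde c_w(T_1) = g_1\tilde p$, $v_1 := \dot{\tilde c}_w(T_1)$, and $\tilde q := \gamma\tilde p = g_1 g_2^{-1}\tilde p$, and consider the piecewise-geodesic path $\sigma$ from $\tilde p$ to $\tilde q$ which first follows $\tilde c_w|_{[0,T_1]}$ from $\tilde p$ to $\tilde p_1$ and then, starting at $\tilde p_1$ in the direction of the lift of $-w\in S_pM$ to $\tilde p_1$, follows for time $T_2$ the lift of the reversed second loop, arriving at $\tilde q$. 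The path $\sigma$ has a single corner at $\tilde p_1$ with interior triangle angle $\pi-\eps$, since the incoming and outgoing directions there are $v_1$ and $-g_1 w$, making angle $\sphericalangle_p(v_1,-w)=\eps$.

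I now apply Corollary \ref{cor:tri} to the triangle $\Delta(\tilde p,\tilde p_1,\tilde q)$ with corner angle $\pi-\eps$ at $\tilde p_1$ and sides of lengths $T_1,T_2 \ge T_0$. Writing $c_v\colon [0,t]\to X$ for the unique geodesic from $\tilde p$ to $\tilde q$, this produces three facts: (i) $t=d(\tilde p,\tilde q)\in[T-2a(\eps/2),\,T]$, so in particular $T-t\le 2\eps/\kappa_1$; (ii) $c_v$ stays within $a(\eps/2)\le\eps/\kappa_1$ of $\sigma$; and (iii) the angles at $\tilde p$ and $\tilde q$ between the direction of $c_v$ and the corresponding leg of $\sigma$ are each bounded by $\arcsin(\tan\eps/\sinh(\kappa_1 T_0))$, which is at most $\eps$ when the constant $T_0=t_0(\kappa_1,\kappa_2)$ from Proposition \ref{prop:c} is taken sufficiently large.

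Set $v_0 := w \in S_{\tilde p}X$, $\theta := 2\eps$ (so $\theta\le\theta_0$ by $\eps\le\eps_0$), $v:=\dot c_v(0)$, and $v' := \gamma^{-1}_*\phi^t(v)$. Since $\pi v=\pi v_0=\tilde p$ and $\sphericalangle_{\tilde p}(v,v_0)\le\eps$ by (iii), we have $v\in A_\theta(v_0)$. For $v'$: $\pi v' = \gamma^{-1}\tilde q = \tilde p$, and the outgoing direction of $\sigma$ at $\tilde q$ is the lift of $-v_1\in S_pM$ to $\tilde q$, namely $\gamma_*(-v_1^{\tilde p})$, where $v_1^{\tilde p}:=(g_1^{-1})_* v_1\in S_{\tilde p}X$ is the $\tilde p$-lift of the projection of $v_1$. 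Applying the isometry $\gamma^{-1}_*$ to the bound in (iii) yields $\sphericalangle_{\tilde p}(v',-v_1^{\tilde p})\le\eps$, and since $\sphericalangle_{\tilde p}(-v_1^{\tilde p},w)=\eps$ by the crossing-angle hypothesis, we get $\sphericalangle_{\tilde p}(v',v_0)\le 2\eps=\theta$, so $v'\in A_\theta(v_0)$. Therefore $\gamma\in\Gamma_\theta(v_0,t)$. Since further $\gamma\pi v = \pi\phi^t(v)$ and $\pi v=\pi v_0$, Proposition \ref{lemm:length-gamma-2} together with Remark \ref{rmk:improved-length} produces an axis $c$ of $\gamma$ of length $T':=|\gamma|$ with
\begin{equation*}
0\le t - T' \le \tfrac{16}{\kappa_1}\bigl(\tfrac{\kappa_2}{\kappa_1}+1\bigr)\eps \quad\text{and}\quad d(c(s),c_v(s))\le \tfrac{24}{\kappa_1}\bigl(\tfrac{\kappa_2}{\kappa_1}+1\bigr)\eps \;\;\forall\,s\in[0,t].
\end{equation*}

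Setting $c_u := \pr\circ c$, the first estimate combined with $T-t\le 2\eps/\kappa_1$ gives $T-T'\le\bigl(\tfrac{18}{\kappa_1}+\tfrac{16\kappa_2}{\kappa_1^2}\bigr)\eps$. For the spatial bound, let $T_3\in[0,T']$ be the parameter at which $c_v$ is closest to $\tilde p_1$. By (ii), for $s\in[0,T_3]$ the point $c_v(s)$ lies within $\eps/\kappa_1$ of the first leg of $\sigma$, which projects to $c_w[0,T_1]$; adding the bound on $d(c(s),c_v(s))$ yields $d(c_u(s),c_w[0,T_1])\le\bigl(\tfrac{25}{\kappa_1}+\tfrac{24\kappa_2}{\kappa_1^2}\bigr)\eps$. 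The symmetric argument on $[T_3,T']$ uses that the second leg of $\sigma$ projects to $c_{-w}[0,T_2]$. The principal technical obstacle is the verification, in the third paragraph, that $v'\in A_\theta(v_0)$: one must transport vectors carefully between the tangent fibres at $\tilde p$, $\tilde p_1$, and $\tilde q$ via the deck transformations $g_1,g_2$, so that the crossing-angle bound $\eps$ on $M$ propagates to a bound of the same order on $\sphericalangle_{\tilde p}(v',v_0)$.
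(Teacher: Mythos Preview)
Your argument is essentially the same as the paper's: lift the two loops, take the deck transformation $\gamma=g_1g_2^{-1}$ (which coincides with the paper's $\gamma_2^{-1}\gamma_1$ once the conjugation between basepoints is unwound), consider the geodesic $c_v$ from $\tilde p$ to $\gamma\tilde p$, verify $v\in A_{2\eps}(v_0)\cap\phi^{-t}\gamma_*A_{2\eps}(v_0)$ with $\pi v=\pi v_0$ and $\gamma\pi v=\pi\phi^t(v)$, and apply Proposition~\ref{lemm:length-gamma-2} with Remark~\ref{rmk:improved-length}. The length and distance bookkeeping via Corollary~\ref{cor:tri} is also the same.

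One small point: your step (iii), bounding the base angles by $\arcsin(\tan\eps/\sinh(\kappa_1 T_0))\le\eps$ ``when $T_0$ is taken sufficiently large'', is an unnecessary detour that introduces an artificial dependence on $T_0$. Since the triangle $\Delta(\tilde p,\tilde p_1,\tilde q)$ lives in a nonpositively curved space, its angle sum is at most $\pi$; with one angle equal to $\pi-\eps$, each of the remaining two is automatically at most $\eps$. The paper uses exactly this (``Clearly, $\theta_i\le\eps$''), and it removes any need to adjust $T_0$. Also, calling $g_1,g_2$ ``axial isometries'' of the loops is a slight abuse of terminology---the loops have corners and are not axes---but the intended deck transformations are clear.
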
	


\begin{proof}
	 Let $c_w:[0,T]  \to M  $ be as in the statement of the theorem
	  and denote by $c_{w,1}\colon[0,T_1]\to M$, $c_{w,1}(t)=c_w(t)$
	and $c_{w,2}\colon[0,T_2]\rightarrow M$, $c_{w,2}(t)=c_w(T_1+t)$, where $T_2:=T-T_1$ the associated geodesic loops.	
	Consider the geodesic loop $\bar c_{w,2}: [0, T_2] \to M $   given by $\bar c_{w,2}(t)= c_{w}(T-t)$ reversing the orientation of $c_{w,2}$.
	Let $\tilde c_{w,1}$ and $\tilde c_{w,2}$ be the lifts of  $ c_{w,1}$ and   $c_{w,2}$ to the universal cover $X$ such that $\tilde c_{w,1}(T_1) = \tilde c_{w,2}(0) =:p_1$ and let
	$\gamma_1$ and $\gamma_2$ be the deck transformations 
	with $\gamma_1(\tilde c_{w,1}(0)) =\tilde c_{w,1}(T_1) $ and  $\gamma_2(\tilde c_{w,2}(0)) = \tilde c_{w,2}(T_2)$. Then $\tilde \alpha: [0, T_2] \to X$ with $\tilde \alpha(t) = \tilde c_{w,2}(T_2-t) $ is a lift  of $\bar c_{w,2}$
	with initial point $\tilde \alpha (0) = \tilde c_{w,2}(T_2) = \gamma_2 p_1$ and therefore
	$\tilde c_2(t) = \gamma_2^{-1}\tilde \alpha(t)$ is a lift of $\bar c_{w,2}$ such that $\tilde c_2(0) = p_1$. Set $\tilde c_1 =\tilde c_{w,1}$,
	$p_0:=\tilde c_1(0)$, $v_0:= \dot{\tilde c}_1(0)$ and $p_2:=\tilde c_2(T_2)$.
	Then $p_2 = \gamma_2^{-1}\tilde \alpha(T_2) = \gamma_2^{-1}(p_1)$ and  $\sphericalangle_{p_1}( \dot{\tilde c}_1(T_1), \dot{\tilde c}_2(0) ) = \eps$. Furthermore,
	$$
	\sphericalangle_{p_2}((\gamma_2^{-1}\gamma_1)_\ast v_0, \dot{\tilde c}_2(T_2))
	= \sphericalangle_{\pi w}( w, -\dot{c}_{w,2}(0)) = \eps.
	$$

	Finally, let $\tilde c_3: [0,\hat T] \to X$ be the geodesic connecting $p_0$ and $p_2$, where $\hat T=d(p_0,p_2)$.
	In particular, the points $p_0,p_1,p_2$ form a geodesic triangle in $X$. We observe that Eq. \eqref{eq:distance2} 
	in Corollary \ref{cor:tri} together with the triangle inequality yields
	\begin{equation}\label{eq:T- hat T}
	 0 \leq T- \hat T  \leq 2a\big(\frac{\eps}{2}\big) \leq \frac{2}{\kappa_1}\eps,
	\end{equation}
	where the last inequality is due to the fact that $\eps\leq \eps_0 < \pi/4$.
	Moreover, $\hat T\geq T_i \geq T_0$ for $i=1,2$. Indeed if $\hat T< T_1$, then by Corollary \ref{cor:tri} we would obtain
	$T_1+T_2 -2a(\eps/2) \leq \hat T < T_1$, leading to $T_2 < 2a(\eps/2)$,
	which contradicts Proposition \ref{prop:period-loops1}. A similar contradiction holds assuming $\hat T < T_2$.
	
	Set $\theta_1:=\sphericalangle_{p_0}( v_0, \dot{\tilde c}_3(0))$ and $\theta_2:=\sphericalangle_{p_2}( \dot{\tilde c}_3(\hat T), \dot{\tilde c}_2(T_2))$. Clearly,  $\theta_i \leq \eps$ for $i=1,2$ and setting
	$v:= \dot{\tilde c}_3(0)$ we have 
	 $v \in A_{\theta_1}(v_0)\subset A_{2\eps}(v_0)$, since 	
	 $$
	 (v^-,v^+) =  (\tilde c_3(-\infty),   \tilde c_3( + \infty)) \in ( \Pa_{\theta_1}(v_0), \Fu_{\theta_1}(v_0)).
	 $$ 
	 We also have 
	\begin{equation}\label{eq:foot-point}
	 \pi \phi^{\hat T}(v)= \tilde c_3(\hat T) = p_2=\gamma_2^{-1}\gamma_1 p_0 =\gamma_2^{-1}\gamma_1 \pi v_0 = \gamma_2^{-1}\gamma_1 \pi v,
	\end{equation}
	and
\begin{align*}
	\sphericalangle_{p_0}( v_0, (\gamma_2^{-1}\gamma_1)^{-1}_* \phi^{\hat T}(v) )& =	
	\sphericalangle_{\gamma_2^{-1}\gamma_1 p_0}( (\gamma_2^{-1}\gamma_1)_* v_0, \phi^{\hat T}(v))
	 =\sphericalangle_{p_2}((\gamma_2^{-1}\gamma_1)_* v_0, \dot{\tilde c}_3(\hat T))\\
		& \leq \sphericalangle_{p_2}( (\gamma_2^{-1}\gamma_1)_* v_0, \dot{\tilde c}_2(T_2)) + \sphericalangle_{p_2}(\dot{\tilde c}_2(T_2), \dot{\tilde c}_3(\hat T))\\
		& \leq \eps +\theta_2 \leq 2\eps.
	\end{align*}
	This implies that  $(\gamma_2^{-1}\gamma_1)^{-1}_* \phi^{\hat T}(v) \in  A_{2 \eps}(v_0)$ and therefore 
	$v \in   \phi^{-\hat T}(\gamma_2^{-1}\gamma_1)_* A_{2 \eps}(v_0)$.
	In particular $$ v \in A_{2 \eps}(v_0) \cap  \phi^{-\hat T}(\gamma_2^{-1}\gamma_1)_* A_{2 \eps}(v_0), $$ which implies
	$\gamma_2^{-1}\gamma_1\in \Gamma_{2\eps}(v_0,\hat T)$.
	Since the vector $v$ satisfies the assumptions of Proposition \ref{lemm:length-gamma-2} and Remark \ref{rmk:improved-length}, $\hat T \geq T_0$ and $\eps\leq \eps_0$, 
	there exists an axis $c$ of $\gamma_2^{-1}\gamma_1$
	of length
	 $T':=\vert \gamma_2^{-1}\gamma_1\vert $ satisfying
	\begin{equation}\label{eq:length}
		 0 \leq  \hat T- T' \leq \frac{16}{\kappa_1}\Big( \frac{\kappa_2}{\kappa_1}+1\Big)\eps,
	\end{equation}	
	and
	\begin{equation}\label{eq:distance}
		d(c(s), \tilde c_3(s)) \leq \frac{24}{\kappa_1}\Big( \frac{\kappa_2}{\kappa_1}+1\Big)\eps \qquad \forall\, s\in[0,\hat T].
	\end{equation}
	Moreover, estimates \eqref{eq:length} and \eqref{eq:T- hat T} yield
	\begin{equation}\label{eq:length2}
		\begin{split}
		0\leq T- T'&  = (T-\hat T) +(\hat T - T') \\
		& \leq \frac{2}{\kappa_1}\eps + \frac{16}{\kappa_1}\Big( \frac{\kappa_2}{\kappa_1}+1\Big)\eps= \Big(\frac{18}{\kappa_1} + \frac{16\kappa_2}{\kappa_1^2}\Big)\eps.
		\end{split}
	\end{equation}
		Let now $p_3$ be the  orthogonal  projection of $p_1$ onto $\tilde c_3$ and  $T_3 \in  [0, \hat T] $ such that
	 $\tilde c_3(T_3) = p_3$.
	 Since
	$\sphericalangle_{p_1}(p_0, p_2) \in [\pi -\eps, \pi ] $ and $\eps \in [0, \pi/2)$,  
	 we obtain from Corollary \ref{cor:tri} that $d(p_1, p_3) \le a( \frac{\eps}{2}) $. 
	 Then
	 $ d(\tilde c_1(0, T_1], \tilde c_3(s))  \le a( \frac{\eps}{2})$ for all $s \in  [0, T_3]$  
	 and  $ d(\tilde c_2(0, T_2], \tilde c_3(s))  \le a( \frac{\eps}{2})$ for $s \in  [T_3, \hat T]$.  
	 We then obtain
	 \begin{align*}
	 d( c(s),  \tilde c_1[0, T_1]) &\le d(c(s), \tilde c_3(s)) + d( \tilde c_3(s), \tilde c_1(0, T_1]) \\
	   & \le  \frac{24}{\kappa_1}\Big( \frac{\kappa_2}{\kappa_1}+1\Big)\eps + a( \frac{\eps}{2}) \le \frac{24}{\kappa_1}\Big( \frac{\kappa_2}{\kappa_1}+1\Big)\eps +\frac{\eps}{\kappa_1}\\
	   &=  \left( \frac{25}{\kappa_1} +\frac{24\kappa_2}{\kappa_1^2} \right) \eps
	\end{align*}
	 for all $s\in[0, T_3]$. Similarly
	  \begin{align*}
	 d( c(s),  \tilde c_2[0, T_2]) &\le d(c(s), \tilde c_3(s)) + d( \tilde c_3(s), \tilde c_2(0, T_2])  \\
	& \le \left( \frac{25}{\kappa_1} +\frac{24\kappa_2}{\kappa_1^2} \right) \eps
	\end{align*}
	 for all $s\in[T_3, \hat T]$.
	 Finally, projecting $s \mapsto c(s)$ onto $M$ we obtain a closed geodesic $c_u(s) =\pr \circ c(s)$ on $M$ of period
	$$
	T^\prime \in \Big(T - \Big(\frac{18}{\kappa_1} + \frac{16\kappa_2}{\kappa_1^2}\Big)\eps,T \Big).
	$$
	Using the estimates above and the fact that $T^\prime \le \hat T$ we obtain 
	$$
	d(  c_u(s),  c_w[0, T_1]) \le   \left( \frac{25}{\kappa_1} +\frac{24\kappa_2}{\kappa_1^2} \right) \eps \qquad\forall s \in [ 0,T_3]
	$$
     and
$$
	d(  c_u(s),  c_{-w}[0, T_2]) \le   \left( \frac{25}{\kappa_1} +\frac{24\kappa_2}{\kappa_1^2} \right) \eps \qquad\forall s\in[T_3, T^\prime].
	$$
	 Summarizing both inequalities yields
	$$
	d(  c_u(s), c_w[0, T]) \le  \left( \frac{25}{\kappa_1} +\frac{24\kappa_2}{\kappa_1^2} \right) \eps \qquad \forall s\in [0, T^\prime].
$$

\end{proof}

\begin{figure}[htb]\label{fig:general-construction}
	\scalebox{.7}{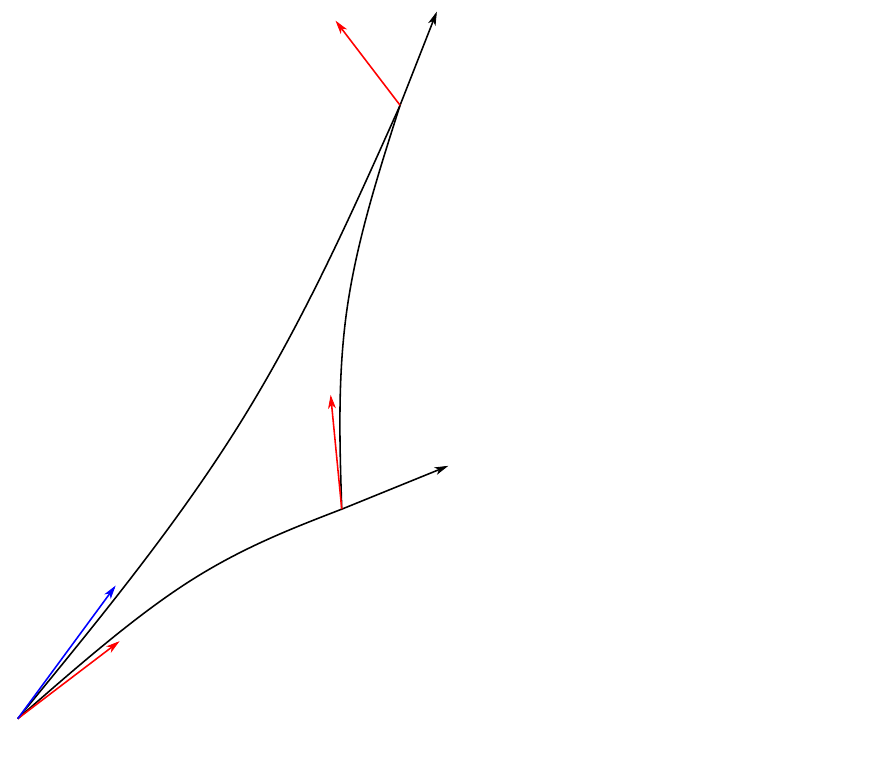}
	\caption{Proving Theorem \ref{thm:partner-general}.}
\end{figure}

\section{Construction of partner orbits for surfaces}\label{sec:partner-orbits-2dim}

In the case that the manifold $M$ has dimension $2$,
it is possible to improve the estimate for the action difference
in Theorem \ref{thm:partner-general} by means of Lemma \ref{lem:perp-proj}. 
This new estimate is consistent with the result in \cite{HK15}.
To this end, we need an estimate from above for the length of closed geodesic loops in $M$,
and one from above and below for the difference between the lengths of sides in a geodesic triangle.


\begin{proposition}\label{prop:period-loops1}
	Let $M$ be a compact $d$-dimensional manifold, $d\geq 2$, with sectional curvature $- \kappa \le K\le 0$, for some $ \kappa>0$.
	Assume that the injectivity radius $\inj(M) = \frac{\rho}{2} $ is positive.
	Let $c: [0,r]\rightarrow M$ be  a geodesic loop, i.e. $c(0) = c(r) $ and  assume $\sphericalangle(\dot c(0), - \dot c(r)) \le \eps >0$.
	Then
	\begin{equation}\label{eq:period-loops1}
		r \ge  \frac{1}{\kappa}  \arcosh \left(\frac {b}{ 1-\cos (\eps)}  +1 \right)
	\end{equation}
	with  $b= 2 (\cosh (\kappa \rho) -1) $.
\end{proposition}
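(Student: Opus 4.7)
The plan is to lift the geodesic loop to the universal cover and apply the law of cosines to a carefully chosen \emph{midpoint triangle}.

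Let $\tilde c\colon [0,r]\to X$ be a lift of $c$ to the universal cover. Set $p_0:=\tilde c(0)$, $p_1:=\tilde c(r)$, $v:=\dot{\tilde c}(0)$, $w:=\dot{\tilde c}(r)$, and let $\gamma\in\Gamma\setminus\{\id\}$ be the deck transformation with $\gamma p_0=p_1$. The geodesic from $p_0$ to $\gamma^{-1}p_0$ is the $\gamma^{-1}$-image of $\tilde c$ reversed, so its initial velocity at $p_0$ equals $-\gamma^{-1}_* w$. Since $\pi\circ\gamma^{-1}=\pi$, the crossing angle transfers upstairs to
\[
\sphericalangle_{p_0}(v,-\gamma^{-1}_* w)=\sphericalangle(\dot c(0),-\dot c(r))\le \eps.
\]

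The key step is to halve the two sides by passing to midpoints. Define $m:=\tilde c(r/2)$ and $m':=\gamma^{-1}(m)$. Then $m'$ is the midpoint of the geodesic $\gamma^{-1}\tilde c$, which joins $\gamma^{-1}p_0$ to $p_0$, so $d(p_0,m)=d(p_0,m')=r/2$, and the angle at $p_0$ of the triangle $\Delta(m',p_0,m)$ is still the crossing angle, hence at most $\eps$. For the third side, since $\gamma^{-1}$ is an isometry,
\[
d(m,m')=d(\gamma m,m)\ge|\gamma|\ge 2\inj(M)=\rho,
\]
using the standard fact that every non-trivial deck transformation of a compact manifold satisfies $|\gamma|\ge 2\inj(M)$ (any midpoint of $q$ and $\gamma q$ with $d(q,\gamma q)<2\inj(M)$ would violate injectivity).

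Applying the law of cosines from Lemma \ref{lem:tri1}, inequality \eqref{eq:lawcosine-kappa2} (with $\kappa$ playing the role of $\kappa_2$), to $\Delta(m',p_0,m)$ and using $\cosh^2=1+\sinh^2$ yields
\[
\cosh(\kappa\rho)\le \cosh(\kappa\,d(m,m'))\le 1+\sinh^2(\kappa r/2)\,(1-\cos\eps).
\]
Hence $\sinh^2(\kappa r/2)\ge(\cosh(\kappa\rho)-1)/(1-\cos\eps)=b/(2(1-\cos\eps))$, and substituting into the doubling identity $\cosh(\kappa r)=2\sinh^2(\kappa r/2)+1$ gives
\[
\cosh(\kappa r)\ge\frac{b}{1-\cos\eps}+1,
\]
whence the claim upon taking $\arcosh$.

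The only subtle point is to resist the temptation of working with the more obvious triangle $\Delta(\gamma^{-1}p_0,p_0,\gamma p_0)$: its two sides from $p_0$ have length $r$ but its opposite side is bounded only by $|\gamma^2|=2|\gamma|\ge 2\rho$, and a parallel computation produces the weaker estimate $r\ge\frac{1}{2\kappa}\arcosh(\cdot)$. Passing to midpoints replaces $r$ by $r/2$ \emph{and} $|\gamma^2|$ by $|\gamma|\ge\rho$, and the factor of two implicit in $\cosh(\kappa r)=2\sinh^2(\kappa r/2)+1$ exactly supplies the missing strength.
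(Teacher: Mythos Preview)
Your proof is correct and follows essentially the same approach as the paper's: both lift the loop, form the isoceles triangle with apex angle $\le\eps$, equal sides $r/2$, and base $d(m,\gamma m)\ge\rho$, then apply the hyperbolic law of cosines \eqref{eq:lawcosine-kappa2}. The only cosmetic difference is that the paper places the apex at $\tilde c(r)$ with the triangle $\Delta(\tilde c(r/2),\tilde c(r),\gamma\tilde c(r/2))$, while you place it at $\tilde c(0)$ with the $\gamma^{-1}$-translate $\Delta(\gamma^{-1}\tilde c(r/2),\tilde c(0),\tilde c(r/2))$; the two triangles are isometric and the computations are identical.
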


\begin{proof}
	Let  $\tilde c: [0,r] \rightarrow X$ be a lift of $c$ and let  $\dot {\tilde c}(0) = v \in S_pX$.
	Then there exits $\gamma \in \Gamma$ with $\gamma (p) = \tilde c(r)$. Furthermore,
	$\sphericalangle(\gamma_*(v), - \dot {\tilde c}(r)) \le \eps $ and $\gamma \tilde c(t)$ is the geodesic with initial condition $\gamma_*(v)$.
	Since by the assumption on the injectivity radius we have
	$ d(  \tilde c(\frac{r}{2}),  \gamma (\tilde c(\frac{r}{2}) ) \ge \rho$. Using \eqref{eq:lawcosine-kappa2} in Lemma \ref{lem:tri1} we obtain
	\begin{align*}
		\cosh ( \kappa \rho ) &\le  \cosh ( \kappa  d(  \tilde c(\frac{r}{2}),  \gamma (\tilde c(\frac{r}{2}) )\le  \cosh^2 (\kappa \frac{r}{2}) - \sinh^2 (\kappa \frac{r}{2}) \cos( \eps)\\
		&= 1 +  \sinh^2 (\kappa \frac{r}{2})(1- \cos (\eps)) \\
		&= 1 + \frac{1}{2} (\cosh (\kappa r) -1)(1- \cos (\eps)) \\
	\end{align*}
	which yields
	$$
	\frac {2}{ 1- \cos (\eps)} (\cosh (\kappa \rho) -1) \le \cosh (\kappa r) -1
	$$
	and therefore
	$$
	\frac{1}{\kappa}  \left(\arcosh \left(\frac {b}{ 1-\cos (\eps)}  +1 \right)\right) \le r
	$$
	where $b= 2 (\cosh (\kappa \rho) -1) $, as claimed.
\end{proof}

\begin{lemma}\label{lem:T'-bound}
	Let $X$ be the universal cover of a compact manifold $M=X/\Gamma$ of dimension $d\geq 2$ with sectional curvature $-\kappa_2^2 \leq K \leq -\kappa_1^2$ with $0 < \kappa_1 \leq \kappa_2$.
	Let $\Delta(p_1,p_2,p_3)$ be a geodesic triangle in $X$ such that $\sphericalangle_{p_3}(p_1,p_2)=\pi-\eps$ for some $\eps\in (0,\pi)$ and such that $T_i=d(p_3,p_i)$ satisfies \eqref{eq:period-loops1}
	with $T_i$ instead of $r$ for $i=1,2$. Let $T'=d(p_1,p_2)$ and $T=T_1+T_2$, then
	\begin{multline*}
	-\frac{1}{\kappa_2}\log\Big( 1-\frac{1}{\pi^2}\Big(1-\frac{8(b+2)^2}{(b+2)^4+2^4}\Big)\eps^2 \Big)
	\leq T - T' \\
	\leq -\frac{1}{\kappa_1}\log\Big( 1-\frac{\eps^2}{4} - \Big(\frac{\eps}{\sqrt{2}b}\Big)^{\frac{4\kappa_1}{\kappa_2}}\Big).
	\end{multline*}
\end{lemma}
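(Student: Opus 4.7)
The plan is to apply the two-sided law of cosines (Lemma \ref{lem:tri1}) to the geodesic triangle $\Delta(p_1,p_2,p_3)$ and translate the resulting hyperbolic inequalities into bounds on $s := T - T' \ge 0$. Using the identity $\cos(\pi - \eps) = -1 + 2\sin^2(\eps/2)$ together with $\cosh(\kappa T) = \cosh(\kappa T_1)\cosh(\kappa T_2) + \sinh(\kappa T_1)\sinh(\kappa T_2)$, the law of cosines becomes
\[
\cosh(\kappa_1 T') \ge \cosh(\kappa_1 T) - 2\sinh(\kappa_1 T_1)\sinh(\kappa_1 T_2)\sin^2(\eps/2),
\]
together with the reverse inequality obtained by replacing $\kappa_1$ by $\kappa_2$.

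I will then divide by $\cosh(\kappa_i T)$ and exploit two elementary identities. The first,
\[
\frac{\cosh(\kappa_i T')}{\cosh(\kappa_i T)} = \cosh(\kappa_i s) - \tanh(\kappa_i T)\sinh(\kappa_i s),
\]
is sandwiched between $e^{-\kappa_i s}$ and $e^{-\kappa_i s} + (1 - \tanh(\kappa_i T))\sinh(\kappa_i s)$. The second,
\[
\frac{2\sinh(\kappa_i T_1)\sinh(\kappa_i T_2)}{\cosh(\kappa_i T)} = \frac{2\tanh(\kappa_i T_1)\tanh(\kappa_i T_2)}{1 + \tanh(\kappa_i T_1)\tanh(\kappa_i T_2)},
\]
takes values in $(0, 1]$. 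Together these reduce each law-of-cosines inequality to a comparison between $e^{-\kappa_i s}$ and $1$ minus a suitable product of a tanh-factor and $\sin^2(\eps/2)$, from which $s$ can be extracted by taking logarithms.

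For the lower bound on $s$ I will combine the $\kappa_2$-inequality with $\sin^2(\eps/2) \ge \eps^2/\pi^2$ (which follows from $\sin x \ge 2x/\pi$ on $[0,\pi/2]$). The hypothesis $\cosh(\kappa_2 T_i) \ge Z_\eps := b/(1-\cos\eps) + 1$ yields $\tanh^2(\kappa_2 T_i) \ge 1 - 1/Z_\eps^2$, so that the tanh-factor is bounded below by $1 - 1/(2Z_\eps^2 - 1)$. Since $Z_\eps$ is minimized on $(0,\pi]$ at $\eps = \pi$ (where $Z_\pi = (b+2)/2$), the estimate $1/(2Z_\eps^2 - 1) \le 2/((b+2)^2 - 2)$ holds uniformly in $\eps$. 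A short algebraic manipulation (equivalent to $3t^2 - 8t - 16 \ge 0$ for $t = (b+2)^2 \ge 4$, with equality at $b = 0$) then yields $2/((b+2)^2 - 2) \le 8(b+2)^2/((b+2)^4 + 16)$, producing the rational constant in the claimed lower bound.

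For the upper bound on $s$ I will start from the $\kappa_1$-inequality, bound the tanh-factor by $1$ and $\sin^2(\eps/2)$ by $\eps^2/4$ to obtain the principal term $\eps^2/4$, and control the correction $(1-\tanh(\kappa_1 T))\sinh(\kappa_1 s) \le e^{-\kappa_1 T}$ via $1-\tanh(\kappa_1 T) \le 2e^{-2\kappa_1 T}$ combined with $\sinh(\kappa_1 s) \le \sinh(\kappa_1 T) \le e^{\kappa_1 T}/2$. The hypothesis gives $e^{-\kappa_2 T_i} \le 1/Z_\eps \le (1-\cos\eps)/b \le \eps^2/(2b)$; raising to the power $\kappa_1/\kappa_2$ and multiplying over $i = 1,2$ produces a correction of order $(\eps^2/(2b))^{2\kappa_1/\kappa_2}$, matching the claimed $(\eps/(\sqrt{2}b))^{4\kappa_1/\kappa_2}$ term up to regrouping the radicals. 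The main obstacle will be the careful bookkeeping needed to reproduce the constants in exactly the form stated; the mixed exponent $4\kappa_1/\kappa_2$ in the upper-bound correction is a direct consequence of the mismatch between the curvature constant $\kappa_1$ used in the law of cosines (which controls $\cosh(\kappa_1 T')$) and the constant $\kappa_2$ used in the loop-length hypothesis of Proposition \ref{prop:period-loops1}, the two being bridged by the elementary identity $e^{-\kappa_1 T} = (e^{-\kappa_2 T})^{\kappa_1/\kappa_2}$.
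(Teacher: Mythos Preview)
Your approach is essentially the same as the paper's: both proofs apply the two-sided law of cosines from Lemma~\ref{lem:tri1}, divide by $\cosh(\kappa_i T)$, and then convert the resulting inequalities into bounds on $e^{-\kappa_i(T-T')}$; the loop-length hypothesis \eqref{eq:period-loops1} is used in both cases to control the correction terms via the same bound $e^{\kappa_2 T_i}\ge\cosh(\kappa_2 T_i)\ge Z_\eps$. The only cosmetic difference is that you package the ratios via the identities $\cosh(\kappa_i T')/\cosh(\kappa_i T)=\cosh(\kappa_i s)-\tanh(\kappa_i T)\sinh(\kappa_i s)$ and $2\sinh(\kappa_i T_1)\sinh(\kappa_i T_2)/\cosh(\kappa_i T)=2\tanh(\kappa_i T_1)\tanh(\kappa_i T_2)/(1+\tanh(\kappa_i T_1)\tanh(\kappa_i T_2))$, whereas the paper writes everything directly in terms of exponentials (e.g.\ $(1-e^{-2\kappa_2 T_1})(1-e^{-2\kappa_2 T_2})/(2(1+e^{-2\kappa_2 T}))$ for the latter ratio and $(e^{\kappa_1 T'}+1)/e^{\kappa_1 T}\ge\cosh(\kappa_1 T')/\cosh(\kappa_1 T)$ for the former); the algebra that recovers the constant $8(b+2)^2/((b+2)^4+16)$ is the same in both presentations.
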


\begin{proof}
	The proof is essentially an application of Lemma \ref{lem:tri1}.
	We start with the upper bound. By \eqref{eq:lawcosin-kappa1} in Lemma \ref{lem:tri1}, the addition theorem for hyperbolic cosine,
	and the fact that $1-\cos(\eps)=2\sin^2(\eps/2)$ we estimate
	\begin{equation}\label{eq:cosh-sinh}
		\begin{split}
		\cosh(\kappa_1 T') &\geq \cosh(\kappa_1 T_1)\cosh(\kappa_1 T_2) -\sinh(\kappa_1 T_1) \sinh(\kappa_1 T_2)\cos(\pi-\eps)\\
		&= \cosh(\kappa_1(T_1+T_2)) +(\cos(\eps)-1)\sinh(\kappa_1 T_1)\sinh(\kappa_1 T_2)\\
		&= \cosh(\kappa_1(T_1+T_2)) -2\sin^2(\eps/2)\sinh(\kappa_1 T_1)\sinh(\kappa_1 T_2).
		\end{split}
	\end{equation}
	Moreover
	\begin{equation}\label{eq:frac-estimate}
		\begin{split}
		\frac{\sinh(\kappa_1 T_1)\sinh(\kappa_1 T_2)}{\cosh(\kappa_1 T)} &
		= \frac{1}{2}\frac{(e^{\kappa_1 T_1} -e^{-\kappa_1 T_1})(e^{\kappa_1 T_2} -e^{-\kappa_1 T_2})}{e^{\kappa_1 T} +e^{-\kappa_1 T}}\\
		&= \frac{(1-e^{-2\kappa_1 T_1})(1-e^{-2\kappa_1 T_2})}{2(1+e^{-2\kappa_1 T})} \leq \frac{1}{2}.
		\end{split}
	\end{equation}
	Using \eqref{eq:cosh-sinh} and \eqref{eq:frac-estimate} and since $e^x/2 \leq \cosh(x) \leq (e^x+1)/2$, we then estimate
	\begin{align} \label{eq:exp}
		\frac{e^{\kappa_1 T'} + 1}{e^{\kappa_1 T}} \geq \frac{\cosh(\kappa_1 T')}{\cosh(\kappa_1 T)} \geq 1 -\sin^2(\eps/2).
	\end{align}
	Using the assumption that $T_i$ satisfies \eqref{eq:period-loops1} for $i=1,2$ and again the fact that $1-\cos(\eps)=2\sin^2(\eps/2)$, we obtain
	\begin{equation}\label{eq:exp-T-bound}
	e^{\kappa_2 T_i} \geq \cosh(\kappa_2 T_i) \geq \frac{b}{1-\cos(\eps)} +1 = \frac{b+2\sin^2(\eps/2)}{2\sin^2(\eps/2)},
	\end{equation}
	so that
	\[
	e^{-\kappa_1 T}\leq \exp\Big( \frac{2\kappa_1}{\kappa_2} \log\big( \frac{2\sin^2(\eps/2)}{b+2\sin^2(\eps/2)}\big)\Big)
	= \Big( \frac{2\sin^2(\eps/2)}{b+2\sin^2(\eps/2)}\Big)^{\frac{2\kappa_1}{\kappa_2}}.
	\]
	Therefore from \eqref{eq:exp} we obtain
	\begin{align*}
		e^{\kappa_1(T'-T)} &\geq 1-\sin^2(\eps/2) -e^{-\kappa_1 T}\\
		& \geq 	1-\sin^2(\eps/2) -\Big( \frac{2\sin^2(\eps/2)}{b+2\sin^2(\eps/2)}\Big)^{\frac{2\kappa_1}{\kappa_2}}\\
		& \geq 1-\frac{\eps^2}{4} - \Big(\frac{\eps}{\sqrt{2}b}\Big)^{\frac{4\kappa_1}{\kappa_2}}
	\end{align*}
	and consequently
	\begin{align*}
		T'-T \geq \frac{1}{\kappa_1}\log\Big( 1-\frac{\eps^2}{4} - \Big(\frac{\eps}{\sqrt{2}b}\Big)^{\frac{4\kappa_1}{\kappa_2}}\Big),
	\end{align*}
	which proves the upper bound. 
	
	For the lower bound we proceed similarly using \eqref{eq:lawcosine-kappa2} in Lemma \ref{lem:tri1}:
	\begin{align*}
		\cosh(\kappa_2 T')&\leq \cosh(\kappa_2 T) +(\cos(\eps)-1)\sinh(\kappa_1 T_1)\sinh(\kappa_2 T_2) \\
		&= \cosh(\kappa_2 T) -2\sin^2(\eps/2)\sinh(\kappa_1 T_1)\sinh(\kappa_2 T_2).
	\end{align*}
	Since $T'\leq T$ we have
	\[
	\frac{\cosh(\kappa_2 T')}{\cosh(\kappa_2 T)} = \frac{e^{\kappa_2 T'} +e^{-\kappa_2 T'}}{e^{\kappa_2 T} +e^{-\kappa_2 T}}
	= e^{\kappa_2(T'-T)}\frac{1 +e^{-2\kappa_2 T'}}{1+e^{-2\kappa_2 T}} \geq e^{\kappa_2(T'-T)}.
	\]
	Moreover, using \eqref{eq:exp-T-bound} we estimate
	\begin{align*}
		\frac{\sinh(\kappa_2 T_1)\sinh(\kappa_2 T_2)}{\cosh(\kappa_2 T)} & = \frac{(1-e^{-2\kappa_2 T_1})(1-e^{-2\kappa_2 T_2})}{2(1+e^{-2\kappa_2 T})}\\
		& \geq \frac{\Big(1- \Big( \frac{2\sin^2(\eps/2)}{b+2\sin^2(\eps/2)}\Big)^2\Big)^2}{2\Big( 1+\Big( \frac{2\sin^2(\eps/2)}{b+2\sin^2(\eps/2)}\Big)^4\Big)}
		> \frac{1}{2}-\frac{4(b+2)^2}{(b+2)^4+2^4},
	\end{align*}
	since the function $\frac{(1-x^2)^2}{1+x^4}$ is decreasing for $x\in(0,1)$ and
	$\frac{2\sin^2(\eps/2)}{b+2\sin^2(\eps/2)}\in (0, \frac{2}{b+2})$ for $\eps\in(0,\pi)$.
	Therefore
	\begin{align*}
		e^{\kappa_2(T'-T)} &\leq \frac{\cosh(\kappa_2 T')}{\cosh(\kappa_2 T)}
		\leq 1-2\sin^2(\eps/2)\Big( \frac{1}{2}-\frac{4(b+2)^2}{(b+2)^4+2^4} \Big) \\
		& \leq 1-\frac{2}{\pi^2}\Big(\frac{1}{2}-\frac{4(b+2)^2}{(b+2)^4+2^4}\Big)\eps^2,
	\end{align*}
	since $\sin(x)\geq \frac{2}{\pi}x$ for $x\in(0,\pi/2)$, and consequently
	\[
	T'-T \leq \frac{1}{\kappa_2}\log\Big(  1-\frac{1}{\pi^2}\Big(1-\frac{8(b+2)^2}{(b+2)^4+2^4}\Big)\eps^2 \Big).
	\]
\end{proof}

We are now ready to prove Theorem 3 stated in the introduction.

\begin{theorem}\label{thm:partner-orbits-2dim}
	Let $M$ be a $2$-dimensional compact Riemannian manifold with sectional curvature $-\kappa_2^2 \leq K \leq -\kappa_1^2$ for $0<\kappa_1 \leq \kappa_2$.
	Let $\eps_0:= \frac{\pi}{16}\frac{\kappa_1}{\kappa_1+\kappa_2}$ and let $T_0=t_0(\kappa_1,\kappa_2)>1$ as in Proposition \ref{prop:c}.
	Then for all $\eps\leq\eps_0$, all $T_1, T_2\geq T_0$
	and all closed geodesics $c_w\colon[0,T]\rightarrow M$ of period $T=T_1+T_2$ with a self-crossing at $p=c_w(0)$ at time $T_1$ and crossing angle $\eps\leq\eps_0$
		there exist constants $C_1, C_2, C_3 >0$, depending on the curvature bounds and on the injectivity radius of $M$, and a closed geodesic $c_u\colon[0,T']\rightarrow M$ of period $T'<T$ such that
	\[
	C_1 \eps^2 \leq T-T'  \leq \left\{
	\begin{array}{ll}
		C_2 \eps^{\frac{4\kappa_1}{\kappa_2}} & \frac{\kappa_1}{\kappa_2}\in\big(0,\frac{1}{2}\big),\\
		C_3 \eps^2 &  \frac{\kappa_1}{\kappa_2}\in\big[\frac{1}{2},1\big],
	\end{array}\right.
	\]
	
\end{theorem}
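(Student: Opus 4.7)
The strategy is to refine the argument of Theorem \ref{thm:partner-general} by exploiting the two--dimensional tool Lemma \ref{lem:perp-proj}, replacing the linear--in--$\eps$ estimate of the action difference by a bound of order $\eps^2$ (or $\eps^{4\kappa_1/\kappa_2}$ when the pinching is weaker than $1/2$). Following the construction of that theorem, I lift to the universal cover $X$ and form the geodesic triangle with vertices $p_0 := \tilde c_1(0)$, $p_1 := \tilde c_1(T_1) = \tilde c_2(0)$ and $p_2 := \gamma_2^{-1}(p_1)$, whose interior angle at $p_1$ is $\pi - \eps$ and whose adjacent sides have lengths $T_1, T_2$. Denote by $\hat T := d(p_0, p_2)$ the length of the third side $\tilde c_3$, by $\gamma := \gamma_2^{-1}\gamma_1$ the deck transformation with $\gamma p_0 = p_2$, and by $c$ its axis, which projects to the partner orbit $c_u$ of length $T' = |\gamma|$. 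The plan is to decompose
\[
T - T' \;=\; (T - \hat T) \;+\; (\hat T - T')
\]
and bound each summand separately.

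For $T - \hat T$, the hypotheses of Lemma \ref{lem:T'-bound} hold: each loop length $T_i$ satisfies \eqref{eq:period-loops1} by Proposition \ref{prop:period-loops1}, since both geodesic loops $c_{w,1}, c_{w,2}$ close up at $p_1$ with angle $\eps$. The lower bound of the lemma then gives $T - \hat T \geq -\frac{1}{\kappa_2}\log(1 - c\eps^2) \geq C_1 \eps^2$ for an explicit $C_1 > 0$ depending on $\kappa_2$ and on the injectivity radius (through the parameter $b$), and since the axis minimises displacement we have $T' \leq \hat T$, so $T - T' \geq C_1 \eps^2$. This yields the claimed lower bound. The upper bound of the lemma, after expanding the logarithm for small $\eps$, is of order $\eps^2$ when $4\kappa_1/\kappa_2 \geq 2$ and of order $\eps^{4\kappa_1/\kappa_2}$ otherwise, by comparing the two terms $\eps^2/4$ and $(\eps/(\sqrt{2}b))^{4\kappa_1/\kappa_2}$ for $\eps < 1$.

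The main task is a quadratic upper bound on $\hat T - T'$, and this is where dimension two is decisive, via Lemma \ref{lem:perp-proj}. Set $r := d(p_0, c)$. Since $\gamma$ is an isometry preserving $c$, the point $p_2 = \gamma p_0$ is also at distance $r$ from $c$, and the foot--points of $p_0, p_2$ on $c$ are separated along $c$ by the translation length $T'$. For $\eps \leq \eps_0$, the segment $\tilde c_3$ stays on one side of $c$ (automatically when $\gamma$ preserves the two components of $X \setminus c$; otherwise one splits $\tilde c_3$ at the unique crossing point provided by convexity of $t \mapsto d(\tilde c_3(t), c)$ and applies the lemma to each piece). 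Applying Lemma \ref{lem:perp-proj} with $r_1 = r_2 = r$ and $|t_2 - t_1| = T'$ yields
\[
\sinh\!\bigl(\tfrac{\kappa_2 \hat T}{2}\bigr) \;\leq\; \cosh(\kappa_2 r)\,\sinh\!\bigl(\tfrac{\kappa_2 T'}{2}\bigr).
\]
Using $T' \geq 2\inj(M) > 0$ and the identity $\sinh(A) = \sinh(B)\cosh(A-B) + \cosh(B)\sinh(A-B)$, this rearranges to $\sinh(\kappa_2(\hat T - T')/2) \leq \tanh(\kappa_2 T'/2)(\cosh(\kappa_2 r) - 1)$, which for $r$ small implies $\hat T - T' \leq 2\kappa_2 r^2$. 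The distance estimate from the proof of Theorem \ref{thm:partner-general}, evaluated at $s = 0$, then gives $r \leq d(p_0, c(0)) \leq \frac{24}{\kappa_1}(\frac{\kappa_2}{\kappa_1}+1)\eps$, whence $\hat T - T' \leq C_2 \eps^2$.

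Combining both contributions completes the proof: for $\kappa_1/\kappa_2 \geq 1/2$ both $T - \hat T$ and $\hat T - T'$ are of order $\eps^2$, while for $\kappa_1/\kappa_2 \in (0, 1/2)$ the term $\eps^{4\kappa_1/\kappa_2}$ dominates $\eps^2$ for $\eps < 1$. The main obstacle I anticipate is the reduction of the Lemma \ref{lem:perp-proj} inequality to a clean estimate $\hat T - T' \leq C r^2$ with constants depending only on the curvature bounds and the injectivity radius; this in turn requires a careful verification that $\tilde c_3$ does not cross the axis $c$ (or, if it does, a splitting argument at the crossing point), which is where the small--crossing--angle hypothesis $\eps \leq \eps_0$ is essential.
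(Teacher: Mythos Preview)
Your proposal is correct and follows essentially the same route as the paper: the same decomposition $T-T'=(T-\hat T)+(\hat T-T')$, Lemma~\ref{lem:T'-bound} for the first summand, and Lemma~\ref{lem:perp-proj} applied with $r_1=r_2=d(p_0,c)$ for the second. Two small remarks: the paper bounds $r$ directly by $a(\rho(2\eps))$ via Lemma~\ref{lem:dis} (sharper than invoking the distance estimate~\eqref{eq:distance}), and it extracts $\hat T-T'$ from the inequality $\sinh(\kappa_2\hat T/2)\le\cosh(\kappa_2 r)\sinh(\kappa_2 T'/2)$ by the cruder step $e^{\kappa_2(\hat T-T')/2}\le\cosh(\kappa_2 a(\rho(2\eps)))$ rather than your addition--formula rearrangement; both give the same $O(\eps^2)$ outcome. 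Your explicit flag about the non--crossing hypothesis of Lemma~\ref{lem:perp-proj} is well taken---the paper passes over this point silently.
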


\begin{proof}
	Let $c_w$ be as in the statement and denote by $c_{w,1}\colon[0,T_1]\rightarrow M$,
	$c_{w,1}(t)=c_w(t)$ and $c_{w,2}\colon[0,T_2]\rightarrow M$, $c_{w,2}(t)=c_w(T_1+t)$,
	where $T_2:=T-T_1$, its geodesic loops. Let also $\bar c_{w,2}$ such that $\bar c_{w,2}(t)= c_{w}(T-t)$,
	and $\gamma_1$, $\gamma_2$ be the deck transformation associated to $\tilde c_1=\tilde c_{w,1}$, $\tilde c_{w,2}$,
	respectively, which denote the lifts of $c_{w,1}$ and $c_{w,2}$.
	Following the beginning of the proof of Theorem \ref{thm:partner-general} consider the geodesic $\tilde c_2(0)=\gamma_2^{-1}\tilde \alpha(t)$
	and set $p_0:=\tilde c_1(0)$, $v_0:= \dot{\tilde c}_1(0)$,
	and $p_2:= \tilde c_2(T_2)$ so that $\gamma_1(p_0)=p_1$, $\gamma_2^{-1}(p_1)=p_2$,
	and $\sphericalangle_{p_1}( \dot{\tilde c}_1(T_1), \dot{\tilde c}_2(0) ) = \eps$.
	Moreover,
	\[
	\sphericalangle_{p_2}((\gamma_2^{-1}\gamma_1)_\ast v_0, \dot{\tilde c}_2(T_2))
	= \sphericalangle_{\pi w}( w, -\dot{c}_{w,2}(0)) = \eps.
	\]
	
	Denote by $\tilde c_3\colon[0,\hat T]\rightarrow M$ the geodesic connecting $p_0$ and $p_2$ where $\hat T=d(p_0,p_2)$.
	Let also $\theta_1:=\sphericalangle_{p_0}( v_0, \dot{\tilde c}_3(0))$ and $\theta_2:=\sphericalangle_{p_2}( \dot{\tilde c}_3(T'), \dot{\tilde c}_2'(T_2))$.
	
	Considering again the vector $v:=\dot{\tilde c}_3(0)$ and arguing as in the proof of Theorem \ref{thm:partner-general}, 
%
%
	we obtain that $\gamma_2^{-1}\gamma_1 \in \Gamma_{2\eps}(v_0,\hat T)$ and Proposition \ref{lemma:length-gamma}
	 yields the existence of an axis $c$ of $\gamma_2^{-1}\gamma_1$ such that $c(-\infty)\in \Pa_{\rho(2\eps)}$ and $\tilde c(+\infty)\in \Fu_{\rho(2\eps)}$,
	 for $\rho(2\eps)=4\Big(\frac{\kappa_2}{\kappa_1}+1\Big)\eps$.
%
	
	To estimate the action difference $\vert T- T'\vert$ where $T'=\vert \gamma_2^{-1}\gamma_1 \vert$  we proceed as follow.
	Parametrising $c$ such that $d(p_0, c)= d(p_0, c(0))$, we have
	\begin{align*}\label{eq:auxiliary}
		d(p_2, c(T')) & = d(\gamma_2^{-1}\gamma_1(p_0), \gamma_2^{-1}\gamma_1( c(0)) ) \\
		& = d(p_0, c(0))\leq a(\rho(2\eps))\leq \frac{8}{\kappa_1}\big(\frac{\kappa_2}{\kappa_1}+1\big)\eps,
	\end{align*}
	by Lemma \ref{lem:dis}.
	Further we have $\frac{\pi}{2}= \sphericalangle_{c(0)}(p_0, c(T')) = \sphericalangle_{c(T')}( p_2, c(2T'))$ which in turn gives
	$\sphericalangle_{c(T')}(p_2, c(0))=\frac{\pi}{2}$, due to dimension $2$. This means that $c[0,T']$ is the orthogonal projection of $\tilde c_3[0,\hat T]$ onto $c$.
	Hence, Lemma \ref{lem:perp-proj} (applicable because of dimension 2) with $r_1=r_2=d(p_0, c(0))$ yields
	\[
	\sinh\big(\kappa_2 \frac{\hat T}{2}\big)\leq \cosh(\kappa_2 d(p_0,c(0))\sinh\big( \kappa_2 \frac{T'}{2}\big),
	\]
	and since $\hat T\geq T'=\vert \gamma_2^{-1}\gamma_1\vert$ and using the estimate for $d(p_0, c(0))$ we further obtain
	\begin{align*}
		e^{\kappa_2\frac{\hat T-T'}{2}} \leq & \frac{e^{\kappa_2\hat T/2}(1-e^{-\kappa_2 \hat T})}{e^{\kappa_2 T'/2}(1- e^{-\kappa_2 T'})}
		= \frac{\sinh(\kappa_2\hat T/2)}{\sinh(\kappa_2 T'/2 )} \\
		& \leq \cosh\big(\kappa_2a(\rho(2\eps))\big).
	\end{align*}
	Using the definition of the function $a(\cdot)$, we conclude that there exists a constant $D>0$ depending on $\kappa_1$ and $\kappa_2$ such that
	\[
	e^{\kappa_2\frac{\hat T-T'}{2}} \leq 1+ D \eps^2,
	\]
	leading to
	\begin{equation}\label{eq:hat T - T'}
	\hat T- T'  \leq \frac{2}{\kappa_2}\log(1+ D\eps^2).
	\end{equation}
	Combining this estimate with the upper bound in Lemma \ref{lem:T'-bound} and using the Taylor series of $\log(1+x)$
	and of $\log(1-x)$ we find constants $C_2, C_3>0$ depending on $\kappa_1$, $\kappa_2$ and $\inj(M)$ such that
	\begin{align*}
		T- T' &= (T-\hat T) +(\hat T - T')\\
		&  \leq \frac{2}{\kappa_2}\log(1+ D\eps^2)
		-\frac{1}{\kappa_1}\log\Big( 1-\frac{\eps^2}{4} - \Big(\frac{\eps}{\sqrt{2}b}\Big)^{\frac{4\kappa_1}{\kappa_2}}\Big)\\
		& \leq \left\{
		\begin{array}{ll}
			C_2 \eps^{\frac{4\kappa_1}{\kappa_2}} & \frac{\kappa_1}{\kappa_2}\in\big(0,\frac{1}{2}\big)\\
			C_3 \eps^2 &  \frac{\kappa_1}{\kappa_2}\in\big[\frac{1}{2},1\big].
		\end{array}
	\right.
	\end{align*}
	Finally, since $\hat T- T' \geq 0$ the lower bound in Lemma \ref{lem:T'-bound} yields
	\begin{align*}
		T- T' & = (T-\hat T) +(\hat T - T') \\
		& \geq -\frac{1}{\kappa_2}\log\Big( 1-\frac{2}{\pi^2}\Big(1-\frac{4(b+2)^2}{(b+2)^4+2^4}\Big)\eps^2 \Big) \\
		& \geq C_1 \eps^2,
	\end{align*}
	for $C_1>0$ being the first non-zero coefficient of the Taylor expansion of the function on the second line at $\eps=0$.
	Combining the estimates above, we obtain the desired action difference.
	\end{proof}

\section{Construction of pseudo-partner orbits}\label{sec:pseudo-partner-orbits}

We now turn to the study of closed geodesics $c_w: [0, T] \to M$, where $M$ is again a manifold of arbitrary dimension, of period $T$
  having a large crossing angle at $c_w(0) = c_w(T_1)$ for $T_1 \in (0,T)$. We will express this fact assuming that
its complement angle $\sphericalangle_{\pi w}(w, \dot{c}_w(T_1))=\eps$ is small enough.
Indeed, if the latter angle is small,
the crossing angle $\sphericalangle_{\pi w}(w, -\dot{c}_w(T_1))$ is necessarily between $\pi-\eps$ and $\pi$, and therefore large.
Under this assumptions we show that a pair of closed geodesics exists which stay
close to the two loops generating the original closed geodesic and whose length are comparable but slightly smaller than the length of the loops. This pair of geodesics is often referred to as pseudo-partner orbits.



\begin{theorem}\label{thm:pseudo-partner}
Let $M$ be a compact Riemannian manifold with sectional curvature $-\kappa_2^2 \leq K \leq -\kappa_1^2$ for $0<\kappa_1 \leq \kappa_2$.
	Let $\eps_0:= \frac{\pi}{16}\frac{\kappa_1}{\kappa_1+\kappa_2}$ and let $T_0=t_0(\kappa_1,\kappa_2)>1$ be the one from Prop. \ref{prop:c}.
	Then for all closed geodesic $c_w: [0, T] \to M$ of period $T$
	with a self-crossing at $p=c_w(0)$ at time $T_1$ such that $\sphericalangle_{p}(w, \dot c_{w}(T_1))=\eps\leq \eps_0 $
	and $T_1, T_2:=T-T_1\geq T_0$
	there exists a pair of closed geodesics $c_{u_1}$ and $c_{u_2}$ of period $\hat T_1$ and $\hat T_2$,
	satisfying
	\[
	0\leq T_1-\hat T_1\leq \frac{8}{\kappa_1}\left(\frac{\kappa_2}{\kappa_1}+1\right)\eps
	\;  \;\text{and} \; \;
	0\leq T_2-\hat T_2\leq \frac{8}{\kappa_1}\left(\frac{\kappa_2}{\kappa_1}+1\right)\eps.
	\]
	Furthermore,
	\[
	d(c_{u_1}(s), c_w(s))\leq \frac{12}{\kappa_1}\Big(\frac{\kappa_2}{\kappa_1}+1\Big) \eps
	\]
	for $s\in[0,T_1]$, and
	\[
	d(c_{u_2}(s), c_w(s+T_1))\leq \frac{12}{\kappa_1}\Big(\frac{\kappa_2}{\kappa_1}+1\Big)\eps
	\]
	for $s\in[0,T_2]$.
	In particular,
	\[
	d(c_{u_1}(\hat T_1), c_{u_2}(0))\leq \frac{32}{\kappa_1}\left(\frac{\kappa_2}{\kappa_1}+1\right)\eps.
	\]

\end{theorem}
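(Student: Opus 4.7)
The plan is to treat each loop independently and apply Proposition~\ref{lemm:length-gamma-2} together with Remark~\ref{rmk:improved-length}, and then combine the two conclusions by the triangle inequality.

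First I would split $c_w$ into the two geodesic loops $c_{w,1}\colon[0,T_1]\to M$ and $c_{w,2}\colon[0,T_2]\to M$ based at $p$, with initial vectors $w_1:=w$ and $w_2:=\dot c_w(T_1)$. The hypothesis $\sphericalangle_p(w_1,w_2)=\eps$ is the crucial input. For the first loop, I would lift $c_{w,1}$ to a geodesic $\tilde c_{w,1}$ in $X$ with $\tilde c_{w,1}(0)=p_0$ and $\dot{\tilde c}_{w,1}(0)=\tilde w_1$, and take the deck transformation $\gamma_1$ with $\gamma_1 p_0=\tilde c_{w,1}(T_1)$. The key observation is that the vector $v:=\tilde w_1$ satisfies the hypothesis of Proposition~\ref{lemm:length-gamma-2}: on the one hand $\pi v=\pi\tilde w_1=p_0$ and $\gamma_1\pi v=\tilde c_{w,1}(T_1)=\pi\phi^{T_1}(v)$, so the foot-point compatibility holds. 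On the other hand, $\gamma_{1,*}^{-1}\phi^{T_1}(v)$ is based at $p_0$, and the angle at $p_0$ between it and $v$ equals, by isometry, the angle at $\gamma_1p_0$ between $\gamma_{1,*}\tilde w_1$ and $\dot{\tilde c}_{w,1}(T_1)$, which is exactly $\eps$. Since the two vectors share the base point $p_0$, the visibility angles at $p_0$ between their respective forward and backward endpoints at infinity coincide with this angle $\eps$; hence $\gamma_{1,*}^{-1}\phi^{T_1}(v)\in A_\eps(\tilde w_1)$, and therefore $\gamma_1\in\Gamma_\eps(\tilde w_1,T_1)$.

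Because $\eps\leq\eps_0\leq\theta_0$ and $T_1\geq T_0=t_0$, Proposition~\ref{lemm:length-gamma-2} produces an axis $c_1$ of $\gamma_1$ of length $\hat T_1=|\gamma_1|$; the additional assumption $\pi v=\pi v_0$ is met, so Remark~\ref{rmk:improved-length} applies and gives
\[
0\leq T_1-\hat T_1\leq \tfrac{8}{\kappa_1}\bigl(\tfrac{\kappa_2}{\kappa_1}+1\bigr)\eps,
\qquad
\tilde d(c_1(s),\tilde c_{w,1}(s))\leq \tfrac{12}{\kappa_1}\bigl(\tfrac{\kappa_2}{\kappa_1}+1\bigr)\eps
\]
for all $s\in[0,T_1]$. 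Projecting $c_1$ back to $M$ yields a closed geodesic $c_{u_1}$ of period $\hat T_1$ with the desired distance bound to $c_w|_{[0,T_1]}$. The argument for the second loop is entirely analogous, starting from the vector $\tilde w_2$ lifting $\dot c_w(T_1)$, the lift $\tilde c_{w,2}$, and the corresponding deck transformation $\gamma_2$; the closedness of $c_w$ ensures that $\dot{\tilde c}_{w,2}(T_2)$ lifts $w=\dot c_w(T)$, so the relevant angle at the endpoint of the lift is again exactly $\eps$.

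Finally, to establish the last inequality, I would use the triangle inequality:
\[
d(c_{u_1}(\hat T_1),c_{u_2}(0))\leq d(c_{u_1}(\hat T_1),c_w(\hat T_1))+d(c_w(\hat T_1),c_w(T_1))+d(c_w(T_1),c_{u_2}(0)).
\]
The first and third terms are at most $\frac{12}{\kappa_1}\bigl(\frac{\kappa_2}{\kappa_1}+1\bigr)\eps$ by the distance estimates from each loop; the middle term is at most $|T_1-\hat T_1|\leq \frac{8}{\kappa_1}\bigl(\frac{\kappa_2}{\kappa_1}+1\bigr)\eps$ since $c_w$ is unit-speed. Summing the three coefficients $12+8+12=32$ gives precisely the stated bound. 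The main conceptual point—where one has to be careful—is verifying the hypothesis of Proposition~\ref{lemm:length-gamma-2} from only the angle hypothesis $\sphericalangle_p(w,\dot c_w(T_1))=\eps$; once one notices that taking $v_0=v$ forces both vectors to share the base point $p_0$, the visibility-angle estimate is immediate and no appeal to Lemma~\ref{lemma:angles-at-infinity} (and hence no factor $2\pi$) is needed, which is why the constants are slightly cleaner than in Theorem~\ref{thm:closing}.
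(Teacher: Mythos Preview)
Your argument is correct and follows the paper's proof essentially step for step: split into the two loops, lift each, verify that $v_0\in A_\eps(v_0)\cap\phi^{-T_i}(\gamma_i)_*A_\eps(v_0)$ via the angle hypothesis and the shared base point, invoke Proposition~\ref{lemm:length-gamma-2} with Remark~\ref{rmk:improved-length}, project, and finish with the triangle inequality decomposing $d(c_{u_1}(\hat T_1),c_{u_2}(0))$ through $c_w(\hat T_1)$ and $c_w(T_1)$. Your justification of the membership $\gamma_{1,*}^{-1}\phi^{T_1}(v)\in A_\eps(v_0)$ is in fact slightly more explicit than the paper's, which asserts $\phi^{T_1}(v_0)\in(\gamma_1)_*A_\eps(v_0)$ directly.
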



\begin{proof}
	 Let $c_w:[0,T]  \to M  $ be as in the statement of the theorem
	and denote by $c_{w,1}\colon[0,T_1]\to M$, $c_{w,1}(t)=c_w(t)$
	and $c_{w,2}\colon[0,T_2]\rightarrow M$, $c_{w,2}(t)=c_w(T_1+t)$, where $T_2:=T-T_1$ the associated geodesic loops.	
	Let $\tilde c_{w,1}$ be a lifts of $c_{w,1}$ to the universal cover $X$ and set $\tilde c_{w,1}(0)=:p_0\in X$, $\dot{\tilde c}_{w,1}(0):= v_0$, and
	$ \tilde c_{w,1}(T_1)=:p_1$.
	Let $\gamma_1$ be the deck transformation with  $\gamma_1(p_0) = p_1$. Then
	\[
	\sphericalangle_{p_1}((\gamma_1)_* v_0, \dot{\tilde c}_{w,1}(T_1)) = \sphericalangle_{\pi w}(w, \dot{c}_{w,1}(T_1))) =  \eps
	\]
	by assumption, which implies that $\phi^{T_1}(v_0)\in (\gamma_1)_* A_{\eps}(v_0)$.
	
	Therefore $v_0\in A_{\eps}(v_0)\cap \phi^{-T_1}(\gamma_1)_* A_{\eps}(v_0)$ and by Proposition \ref{lemm:length-gamma-2} and Remark \ref{rmk:improved-length} there exists an axis $c_1$ of $\gamma_1$ of length $\hat{T}_1$ such that	
	\begin{equation}\label{eq:length-first-axis}
		0 \le T_1- \hat{T}_1 \leq \frac{8}{\kappa_1}\Big(\frac{\kappa_2}{\kappa_1}+1\Big)\eps,
	\end{equation}
	and such that
	\begin{equation}\label{eq:dist-first-axis}
		d(c_1(s), \tilde c_{w,1}(s))\leq \frac{12}{\kappa_1}\Big(\frac{\kappa_2}{\kappa_1}+1\Big)\eps
	\end{equation}
	for all $s\in[0,T_1]$.
	
	A similar construction using a lift of the geodesic loop $c_{w,2}$ and the associated deck transformation $\gamma_2$ we obtain the existence
	of an axis $c_2$ of $\gamma_2$ of length $\hat{T}_2$ satisfying
	\begin{equation}\label{eq:length-second-axis}
		0 \le T_2- \hat{T}_2 \leq \frac{8}{\kappa_1}\Big(\frac{\kappa_2}{\kappa_1}+1\Big)\eps,
	\end{equation}
	and such that
	\begin{equation}\label{eq:dist-second-axis}
		d(c_2(s), \tilde c_{w,2}(s))\leq \frac{12}{\kappa_1}\Big(\frac{\kappa_2}{\kappa_1}+1\Big)\eps
	\end{equation}
	for all $s\in[0,T_2]$.
	
	Projecting the two axis $c_1$ and $c_2$ onto $M$ we obtain two closed geodesics $c_{u_1}=\pr \circ c_1(s)$ and $c_{u_2}=\pr \circ c_2(s)$
	of period $\hat T_1$ and $\hat{T}_2$, respectively, whose periods satisfy \eqref{eq:length-first-axis} and \eqref{eq:length-second-axis},
	and such that
	\[
	d(c_{u_1}(s), c_{w}(s))\leq \frac{12}{\kappa_1}\Big(\frac{\kappa_2}{\kappa_1}+1\Big)\eps,\qquad s\in[0,T_1],
	\]
	and
	\[
	d(c_{u_2}(s), c_{w}(s+T_1))\leq \frac{12}{\kappa_1}\Big(\frac{\kappa_2}{\kappa_1}+1\Big)\eps,\qquad s\in[0,T_2].
	\]
	Moreover, using these estimates and the triangle inequality we obtain
	\begin{align*}
	d(c_{u_1}(\hat T_1), c_{u_2}(0))& \leq d( c_{u_1}(\hat T_1), c_{w,1}(\hat T_1)) + d(c_{w,1}(\hat T_1), c_{w,1}(T_1)) \\
	&\hspace{1cm} + d(c_{w,2}(0), c_{u_2}(0))\\
	& \leq \frac{12}{\kappa_1}\Big(\frac{\kappa_2}{\kappa_1}+1\Big)\eps + (T_1-\hat T_1) + \frac{12}{\kappa_1}\Big(\frac{\kappa_2}{\kappa_1}+1\Big)\eps\\
	& \leq \frac{24}{\kappa_1}\Big(\frac{\kappa_2}{\kappa_1}+1\Big)\eps + \frac{8}{\kappa_1}\Big(\frac{\kappa_2}{\kappa_1}+1\Big)\eps
	= \frac{32}{\kappa_1}\Big(\frac{\kappa_2}{\kappa_1}+1\Big)\eps.
	\end{align*}
	which yields the last estimate.	
\end{proof}

\bibliographystyle{plain}

\bibliography{partner}

\end{document}